\documentclass[11pt,final]{amsart}

\usepackage{geometry,enumitem}
\usepackage{hyperref}

\usepackage[usenames, dvipsnames]{color}
\usepackage{accents}

\usepackage{amsthm, amsxtra,amssymb}

\textwidth = 6 in \textheight = 8.6 in \oddsidemargin = 0.25 in
\evensidemargin = 0.25 in \topmargin = 0.0 in \headheight = 0.0 in
\headsep = 0.3 in
\parskip = 0.0in
\parindent = 0.2in

\newtheorem{theorem}{Theorem}[section]
\newtheorem{proposition}[theorem]{Proposition}
\newtheorem{lemma}[theorem]{Lemma}
\newtheorem{corollary}[theorem]{Corollary}
\newtheorem{definition}{Definition}[section]
\newtheorem{remark}[theorem]{Remark}

\numberwithin{equation}{section}

\numberwithin{equation}{section}

\newcommand{\R}{\mathbb{R}}
\newcommand{\nn}{\nonumber}

\author{Samer Dweik, Nassif Ghoussoub, Young-Heon Kim and Aaron Zeff Palmer}

\address{* Samer Dweik, Nassif Ghoussoub, Young-Heon Kim and Aaron Zeff Palmer}
\address{Department of Mathematics\\ University of British Columbia\\ Vancouver, V6T 1Z2 Canada}
\email{dweik@math.ubc.ca, nassif@math.ubc.ca, yhkim@math.ubc.ca, azp@math.ubc.ca}

\title{Stochastic optimal transport with free end time}
\date{\today}

\thanks{N. Ghoussoub and Y.-H. Kim are partially supported by  the
Natural Sciences and Engineering Research Council of Canada (NSERC). \\
%{\em MSC 2010 subject classifications}  Primary 49, 60; secondary 52.\\
%{\em Keywords and phrases}: Optimal Transport, Skorokhod Embedding, Stopping Times, Brownian Motion, Optimal Control, Duality, Eulerian Formulation, Free Boundary, Variational Inequality.\\
\copyright 2019 by the author.
}

\begin{document}

\begin{abstract}
We consider a stochastic transportation problem between two prescribed probability distributions (a source and a target) over processes with general drift dependence and with free end times. First, and in order to establish a dual principle, we associate two equivalent formulations of the primal problem  in order to guarantee its convexity and lower semi-continuity with respect to the source and target distributions. 
%prove a weak duality for this problem.
We exhibit an equivalent Eulerian formulation, whose dual variational principle is given by Hamilton-Jacobi-Bellman type variational inequalities. In the case where %the dependence on
the drift is bounded,
%``weak", i.e. when the drift component of the cost defining  Lagrangian $L$ is superlinear ($L \approx |u|^p$, for some large $p$), 
regularity results on the minimizers of the Eulerian problem then enable us to prove attainment in the corresponding dual problem. We also address attainment
%when the drift dependence is ``strong", i.e.
when the drift component of the cost defining  Lagrangian $L$ is superlinear $L \approx |u|^p$ with $1<p<2$, in which case the setting is reminiscent of our approach -in a previous work- on  deterministic controlled transport problems with free end time. We finally address criteria under which the optimal drift and stopping time are unique, namely strict convexity in the drift component and monotonicity in time of the Lagrangian.
\end{abstract}
\maketitle

\tableofcontents

\section{Introduction}

The problem of transporting a distribution from a given source to a prescribed target has been studied since the pioneering work of Monge in 1781 (see \cite{Monge}) and has many applications in analysis,
%in engineering, robotics, finance as well as mathematical applications
probability theory, and partial differential equations. In this paper, we consider an optimal transportation problem for stochastic processes with controlled dynamics and free end time, where the transport cost is given by a general Lagrangian $L$ on $\R^+\times \R^d\times \R^d$ as follows:
if $\mu$ and $\nu$ are two probability measures, then the stochastic transport problem is formally stated as
\begin{align}\label{primal}
{\mathcal P}_L(\mu, \nu):=    \inf_{\beta,\tau} \Big\{\mathbb{E}\Big[\int_0^\tau L\big(t,X_t,\beta_t\big)dt\Big];\ dX_t=\beta_t\, dt+dW_t,\ X_0\sim \mu,\ X_\tau\sim \nu\Big\}.
\end{align}
More precise definitions will be given later, but for now we mention that the minimization is over all suitable drifts $\beta$, and all  stopping times $\tau$.  The notation $X_0 \sim \mu$ means that the initial position of the path has $\mu$ as its distribution, and $X_\tau \sim \nu$ means that the processed stopped at the random time $\tau$ has $\nu$ as its distribution.

%{\red \bf [Discuss here comparison with previous works, e.g. Mikami, etc (probably also Erhan Bayraktar (U. Michigan).]}
In \cite{mikami2002optimal,mikami2008optimal,tan2013optimal}, the authors consider Problem \eqref{primal} in the case where the end time is fixed ($\tau=1$), i.e. they minimize
 \begin{align}\label{Primal with fixed end time}
\inf_{\beta} \Big\{\mathbb{E}\Big[\int_0^1 L\big(t,X_t,\beta_t\big)dt\Big];\ dX_t=\beta_t\, dt+dW_t,\ X_0\sim \mu,\ X_1\sim \nu\Big\}.
\end{align}
 Under some assumptions on the Lagrangian $L$, they establish a weak duality  principle for  \eqref{Primal with fixed end time}, namely that the primal value of \eqref{Primal with fixed end time} equals the value of the following dual problem:
% $$\inf\eqref{Primal with fixed end time}=$$
 % has a dual formulation, which is the following (see, for instance, \cite{8,11}):
%  $$\inf\eqref{Primal with fixed end time}=$$
\begin{align} \label{dual with fixed end time} 
%\sup_{\phi^\pm}\bigg\{\int_{\mathbb{R}^d} \phi^- \mathrm{d}\mu^- - \int_{\mathbb{R}^d} \phi^+ \mathrm{d}\mu^+ :\mathbb{E}^{\mathbb{P}}\left[\phi^-(X_1) - \phi^+(X_0)\right] \leq \mathbb{E}^{\mathbb{P}} \left[\int_0^1 L(t,X_t,\beta_t^\mathbb{P})\mathrm{d}t\right], \mathbb{P} \in \mathcal{P}(\delta_x)\bigg\}
%\end{equation}
%\begin{equation} \label{dual Eulerian}
\sup_{\psi}\Big\{\int_{\mathbb{R}^d} \psi(y)\,\nu(\mathrm{d}y) - \int_{\mathbb{R}^d} J(0,x)\,\mu(\mathrm{d}x);\ J \mbox{\ solves\ }\eqref{PDE with fixed end time} \Big\}
\end{align}
%$$=\min\eqref{Eulerian classique}$$
where 
\begin{equation} \label{PDE with fixed end time}
\begin{cases}
\partial_t J(t,x)  + \frac{1}{2} \Delta J(t,x) + H\big(t,x,\nabla J(t,x)\big)= 0 &\  \mbox{in\ }\, (0,1) \times \mathbb{R}^d,\\
J(1,x)= \psi(x) &\  \mbox{on\ }\,\mathbb{R}^d,
\end{cases}
\end{equation}
where $H$ is the Hamiltonian associated to $L$.
%and
%$$\mathcal{P}(\delta_x)=\bigg\{\mathbb{P} \in \mathcal{P}(\delta_x,\nu)\,:\,\nu \in \mathcal{P}(\mathbb{R}^d)\bigg\}.$$\\
A special case that has received recent attention is when the Lagrangian is of the form $L(t,x,u)=\frac{1}{2}|u|^2+V(x)$, which has connections with the Schr\"{o}dinger bridge problem
%, and a partially explicit solution in terms of the entropy regularized optimal transport problem
\cite{leonard2012schrodinger, chen2016relation,mikami2004monge}.  The existence of optimal $\psi$ has also been established for the case when $L$ has quadratic growth in $u$ and the target distribution is smooth with $\nu>0$ for a version of the problem including a mean field cost posed on the torus   \cite{porretta2013planning,porretta2014planning}, which makes use of the variational structure and energy estimates.  On the other hand, stopping uncontrolled processes with distribution constraints has a vast literature, in particular pertaining to applications in finance; for some of the approaches related to (\ref{primal}) see  \cite{bayraktar2019distribution, beiglbock2018geometry,beiglboeck2017optimal, kallblad2017dynamic}.

Coming back to Problem \eqref{primal}, we shall pose the initial problem in a weaker sense so that it involves randomized stopping times and weak solutions to the SDE, analogous to the Kantorovich relaxation of the optimal transport \cite{kantorovich1942translocation} problem. The ultimate goal is  to obtain minimizers involving true stopping times and representing strong solutions to the SDE.
For that, we shall give two formulations of the stochastic transport problem (see Section~\ref{sec:stoch-form}), which we will ultimately prove equivalent:
\begin{itemize}
    \item A \emph{weak stochastic} formulation that poses the optimization problem over (weakly) controlled processes and randomized stopping times.
    %by the weak solution of SDE's on the space of Brownian paths with randomized stopping times.
    \item A \emph{convex stochastic} formulation which poses the optimization over probability measures on a space of randomly stopped paths for both state and drift. % with randomized stopping times.
    % \item The \emph{strong Eulerian} formulation over velocity fields, space-time densities and a stopping distribution.
    % \item The \emph{weak Eulerian} formulation over phase-space measures and a stopping distribution.
\end{itemize}

%The interesting case of particles with mean-field interactions will be done in a later work.
Under appropriate conditions on $L$, the latter equivalent formulation renders the problem l.s.c. and convex in $\nu$.  This will allow us to identify a corresponding dual problem, which can be described as follows:
\begin{align}\label{DualStochastic}
\mathcal{D}_L(\mu,\nu)=\ \sup_{\psi}\Big\{\int_{\R^d} \psi(y)\,\nu(dy) - \int_{\R^d} J_\psi(0,x)\, \mu(dx)\Big\},
%=&\  \sup_{(\psi,J)\ {\rm smooth}}\Big\{\int_{\R^d} \psi(x)\,\nu(dx) - \int_{\R^d} J(0,x)\, \mu(dx)\,:\,\ (\psi,J)\,\,\,\mbox{satisfies}\,\,\,\eqref{HJBESec.4}\bigg\}.
\end{align}
where  $J_\psi$ can be viewed as the viscosity solution (equivalently minimal supersolution) of  the following second order Hamilton-Jacobi-Bellman quasivariational inequality,
\begin{equation}\label{HJBESec.0}
\begin{cases}
\partial_t J(t,x) +  \frac{1}{2} \Delta J(t,x) + H\big(t,x,\nabla J(t,x)\big) \leq 0 &\ \mbox{in}\,\,\ \mathbb{R}^+ \times \mathbb{R}^d,\\
\psi(x) - J(t,x)\leq 0   &\ \mbox{on}\,\,\ \mathbb{R}^+ \times\mathbb{R}^d,
\end{cases}
\end{equation}
where $H$ is again the Hamiltonian associated to $L$. We then establish the (weak) duality principle (see Section~\ref{sec:duality}),
\begin{equation}\label{weakD}
\mathcal{D}_L(\mu,\nu)={\mathcal P}_L(\mu, \nu). 
\end{equation}
 
 The most crucial part of the analysis is to find an optimal end potential $\psi$ (hence $J_\psi$) for the dual problem \eqref{DualStochastic}. In fact, there is no general result in the literature about the attainment in the dual problem \eqref{dual with fixed end time}. However, one of the main advantages of considering \eqref{primal} instead of \eqref{Primal with fixed end time} is that the constraints on the potentials $(\psi,J_\psi)$ are now somehow relaxed, i.e. we have $J \geq \psi$ on $\mathbb{R}^+ \times \mathbb{R}^d$ instead of $J(1,\cdot)=\psi$ and so, this will allow us to replace $\psi$ by an end potential $\bar\psi\geq \psi$ that satisfies
 $$
    \frac{1}{2}\Delta \bar\psi(x)+\inf_{t\in \R^+}H\big(t,x,\nabla \bar\psi(x)\big)\leq 0,
 $$
 and we get some Sobolev/H\"older estimates on $\bar\psi$.
%The most crucial part of the analysis is to find an optimal end potential $\psi$ (hence $J_\psi$) for the dual problem,
We note that this attainment, in tandem with the weak duality (\ref{weakD}), allows - through  a verification type theorem -  to characterize both the optimal process and the stopping time that resolve the primal problem. In our quest to prove attainment in the dual problem,
%which
%Finally, we get the following:
%\begin{proposition}\label{prop:dual_equivalence}
%We have
%\begin{align*}
%\mathcal{D}_L(\mu,\nu)=&\ \sup_{\psi\in C_b(\R^d)}\Big\{\int_{\R^d} \psi(y)\,\nu(dy) - \int_{\R^d} J_\psi(0,x)\, \mu(dx)\Big\}\\
%=&\  \sup_{(\psi,J)\ {\rm smooth}}\Big\{\int_{\R^d} \psi(x)\,\nu(dx) - \int_{\R^d} J(0,x)\, \mu(dx)\,:\,\ (\psi,J)\,\,\,\mbox{satisfies}\,\,\,\eqref{HJBESec.4}\bigg\}.
%\end{align*}
%\end{proposition}
%which reduces to finding an optimal end potential.
%The end potential has an economic interpretation as the marginal value of the given position. The optimal control will minimize cost over controls without the target constraint with value added from this end potential.  This duality allows us to determine many useful properties of the optimal controlled processes and end times.
we will focus on two cases:\\

\begin{enumerate}
\item  When the drift is bounded,
%weak (i.e., when the drift component of the Lagrangian is proportional to $|\beta|^p$ with $1+\frac{d}{2} \leq p<\infty$)
the distribution of the process then possesses additional Sobolev regularity due to the diffusion. However, the dual potential may be unbounded with singularities similar to the fundamental solution of the Laplace equation. We find that a sufficient condition to solve the problem is for the target distribution to lie in the dual to an appropriately weighted $L^1$ space. This case builds upon the Sobolev space approach for the Skorokhod problem (without drift) studied in \cite{GKPS}.\\

\item When the drift is strong (that is if \,$1<p<2$), there is no additional regularity on the density due to the possibility of `local controllability', i.e.\ the ability  to transport to a Dirac-mass with finite-cost.  However, this `local controllability' allows for uniform bounds and H\"{o}lder estimates on the end potential, as in \cite{Cannarsa2010Holder}, and generalizes the approach for deterministic control problems \cite{GKP}.  In particular, any compactly supported target measure may be reached with finite cost.\\
\end{enumerate}

%The case when the cost of the drift is quadratic posseses special properties. In particular, the cost of the drift corresponds exactly with the relative entropy of the measure induced by the Girsanov transformation.  A simple transformation relates the problem to the statistical Schr\"{o}dinger bridge problem.
%Let $\mu$ and $\nu$ be two probability measures. Then, the stochastic problem is formally stated as
%\begin{align*}
%    \inf_{\beta,\tau} \Big\{\mathbb{E}\Big[\int_0^\tau L\big(t,X_t,\beta_t\big)dt\Big];\ dX_t=\beta_t\, dt+dW_t,\ X_0\sim \mu,\ X_\tau\sim \nu\Big\}.
%\end{align*}
%We pose the problem in a weaker sense with randomized stopping times and weak solutions to the SDE, analogous to the Kantorovich relaxation of optimal transport \cite{kantorovich1942translocation}.

\noindent \hspace{0.1cm} In either case, we shall prove attainment in the dual problem (Section \ref{sec:attainment-large-p}). But for that, we need to introduce
%In this section, we express
two Eulerian formulations (see Section~\ref{sec:Eulerian}) for the weak stochastic formulation of the primal problem (\ref{primal}). %(\ref{eqn:primal_weak_stochastic}):
\begin{itemize}
\item  {\em The strong Eulerian formulation}, which poses the problem with a velocity field and the solution to a Fokker-Planck equation with stopping. More precisely,
\begin{align}
    \mathcal{P}_L^{\mathcal{E}}(\mu,\nu)=\inf_{(m,v,\rho)\in \mathcal{E}(\mu)}\Big\{\int_{\R^+}\int_{\R^d} L\big(t,x,v(t,x)\big)m(t,x)\,dx dt;\ \int_{\R^+} \rho(d\tau,\cdot)=\nu\Big\},
\end{align}
where a triplet $(m,v,\rho)$ belongs to $\mathcal{E}(\mu)$ if the following hold: %the following holds:
%\begin{itemize}
 $\rho$ is a probability measure on $\R^+\times \R^d$, $m_t$ is a nonnegative density in the Sobolev space $H^1(\R^d)$ for each time $t \in \mathbb{R}^+$, and $v$ is a measurable velocity field, and for all smooth test functions $\phi$  on $\mathbb{R}^+ \times \mathbb{R}^d$, we have the following:
    \begin{align}%\label{eqn:strong_Eulerian_evolution}
    \int_{\R^+}\int_{\R^d}&\bigg[\partial_t \phi(t,x)\,m(t,x)+\nabla \phi(t,x)\cdot\bigg(v(t,x)\,m(t,x)-\frac{1}{2}\nabla m(t,x)\bigg)\bigg]dxdt\nn\\
        =&\ \int_{\R^+}\int_{\R^d} \phi(\tau,y)\rho(d\tau,dy)-\int_{\mathbb{R}^d}\phi(0,x)\mu(dx).
    \end{align}
%\end{itemize}
%We emphasize that $\rho$ is nonnegative is a significant constraint in this definition.
%Then, we consider the following problem:
%\begin{align}
%    \mathcal{P}_L^{\mathcal{E}}(\mu,\nu)=\inf_{(m,v,\rho)\in \mathcal{E}(\mu)}\Big\{\int_{\R^+}\int_{\R^d} L\big(t,x,v(t,x)\big)m(t,x)\,dt dx\,:\,\ \int_{\R^+} \rho(d\tau,\cdot)=\nu\Big\}.
%\end{align}

\item  {\em The convex Eulerian formulation}, which poses the problem over phase-space distributions satisfying a convex set of inequalities. More precisely,
\begin{align}%\label{eqn:Eulerian_primal_convex}
    \mathcal{P}_L^{\tilde{\mathcal{E}}}(\mu,\nu)=\inf_{(\eta,\rho)\in \tilde{\mathcal{E}}(\mu)}\Big\{\int_{\R^+}\int_{\R^d}\int_{\R^d} L(t,x,u)\eta_t(dx,du)dt\,:\,\ \int_{\R^+} \rho(d\tau,\cdot)=\nu\Big\},
\end{align}
where a pair of density process and stopping time $(\eta,\rho) \in \tilde{\mathcal{E}}(\mu)$ if % is a pair of density process and stopping time if the following holds:
%\begin{itemize}
$\eta$ is a measurable map from $\R^+$ to nonnegative measures on $\R^d\times \R^d$,
$\rho$ is a probability measure on $\R^+\times \R^d$, and
     for all smooth test functions $\phi$ on $\mathbb{R}^+ \times \mathbb{R}^d$:
    \begin{align}%\label{eqn:convex_Eulerian_evolution}
         \int_{\R^+}\int_{\R^d}\int_{\R^d}&\Big[\partial_t \phi(t,x)+\frac{1}{2}\Delta \phi(t,x)+\nabla \phi(t,x)\cdot u\Big]\eta_t(dx,du)dt\nn\\
        &=    \ \int_{\R^+}\int_{\R^d} \phi(\tau,y)\rho(d\tau,dy)-\int_{\mathbb{R}^d}\phi(0,x)\mu(dx).
    \end{align}

\end{itemize}

We shall prove  the latter to be equivalent to the primal problem (\ref{primal}) first by embedding phase-space distributions into the stochastic formulation and then showing the reverse inequality by using weak duality.  We then prove that, when the drift is bounded,
%weak (that is, $p  \ge 1+ d/2$)
the strong Eulerian formulation is also equivalent by using suitable Sobolev estimates.

%We note that these estimates
%and bounds on the moment of the distributions also derived in this section
%will be used to prove dual attainment in the next section.
After proving attainment in the dual problem (see Section \ref{sec:attainment-large-p}), we proceed to obtain useful information on the primal problem, and in some special cases (e.g., when \,$t\mapsto L(t,x,u)$ is monotone), we show (see Section~\ref{sec:hitting_times}) that the unique optimizer is given by the hitting time to a space-time barrier,
$$
    \tau^*=\inf\big\{t;\ J_\psi(t,X_t)=\psi(X_t)\big\},
$$
which is reminiscent of the graphical structure describing the optimizers in the deterministic mass transports problems studied by Brenier \cite{B1},  Gangbo-McCann \cite{G-M} and others.

\section{Stochastic Formulations} \label{sec:stochastic_formulations}\label{sec:stoch-form}
\subsection{Basic assumptions and notations}
We shall assume throughout the paper that the Lagrangian, $(t,x,u) \in \mathbb{R}^+ \times \mathbb{R}^d \times U \mapsto L(t,x,u)\in \mathbb{R}^+$, where $U\subset \R^d$, is a continuous function of time, position, and drift, uniformly continuous in $(t,x)$ (uniformly with respect to $u$) and is convex with respect to the drift, i.e.\ $u\mapsto L(t,x,u)$ is convex for all $(t,x)\in \R^+\times \R^d$.
%Then, we suppose that $t \mapsto L(t,X_t(\cdot),\cdot)$ is uniformly continuous in $t$.
%\marginpar{\red This continuity assumption on $L$ doesn't make sense to me. It is used in the proof of Prop \ref{prop:convex_compact} - AP.\\ \\
%I don't see any problem. But, it's not sharp; in fact, just a uniform continuity with respect to $(t,x)$ is sufficient.}

%$$\Delta_t L(\varepsilon):=\sup \frac{|L(t,X_t}{} $$
In addition, we assume either that $U=\R^d$ and the Lagrangian $L$ is superlinear with respect to the drift and bounded from below, i.e. there is some $c>0$ and $p>1$ such that
\begin{equation}\label{eqn:L-assumption-1}
    c\big(|u|^p +1\big)\leq L(t,x,u),\,\,\,\,\mbox{for all}\,\,\,(t,x,u) \in \mathbb{R}^+ \times \mathbb{R}^d \times \mathbb{R}^d,
\end{equation}
or that $U$ is a bounded convex subset of $\R^d$. %(in Section \ref{sec:attainment-large-p}, we will assume that $U=B(0,\overline{\mathfrak{u}})$, for some $\overline{\mathfrak{u}}>0$).
%(which can be considered as the case when $p=+\infty$).
% \alpha>0$ and $u\mapsto L(t,x,y)\sim |u|^p$,  (i.e. there are constants $p>1$ and $C>0$ such that $C (|u|^p-1)\leq L(t,x,u) \leq C^{-1}( |u|^p+1)$ for every $(t,x,u) \in \mathbb{R}^+ \times \mathbb{R}^d \times \mathbb{R}^d$).
% In addition, we suppose that $t \mapsto L(t,x,u)$ is uniformly continuous in $t$, for every $(t,u) \in \R^d\times \R^d$.
 Let $\Omega =  C(\R^+, \R^d)$ be the space of continuous paths from $\mathbb{R}^+$ to $\mathbb{R}^d$, $(X_t)_{t \in \mathbb{R}^+}$ be the canonical process, i.e. $X_t(\omega)=\omega(t)$\, for every $\omega\in \Omega$, and $\mathbb{F}=\{\mathcal{F}_t\}_{t\in \R^+}$ be the canonical filtration generated by $X$. Let $\mu,\,\nu$ be two probability measures on $\mathbb{R}^d$. The two (equivalent) formulations of the stochastic transport problem are the following:
%\begin{itemize}
%    \item The \emph{weak stochastic} formulation poses the optimization over (weakly) controlled processes and randomized stopping times.
    %by the weak solution of SDE's on the space of Brownian paths with randomized stopping times.
%    \item The \emph{convex stochastic} formulation poses the optimization over probability measures on a space of paths for the state and drift with randomized stopping times.
    % \item The \emph{strong Eulerian} formulation over velocity fields, space-time densities and a stopping distribution.
    % \item The \emph{weak Eulerian} formulation over phase-space measures and a stopping distribution.
%\end{itemize}
\subsection{The weak stochastic formulation}\label{sec:weak_stochastic}
 We say that the triplet $(\mathbb{P},\beta,\alpha)$ belongs to $\mathcal{A}(\mu)$ if the following conditions hold:
\begin{enumerate}[label=\roman*)]
\item
%We denote $\Omega =  C(\R^+, \R^d)$ corresponding to the paths $X_t(\omega)=\omega(t)$ for $\omega\in \Omega$, and $\mathbb{F}=\{\mathcal{F}_t\}_{t\in \R^+}$ is the natural filtration. 
%We first suppose that
$(\Omega,\mathbb{F},\mathbb{P})$ is a filtered probability space.
\item The initial distribution is given by $\mu$, i.e.\ $X_0\sim_\mathbb{P} \mu$ which means that ${X_0}_\# \mathbb{P}=\mu$.
\item The drift, $\beta:\R^+\times \Omega \rightarrow U$, is $\mathbb{F}$-progressively measurable and locally integrable, i.e.\ for each $\tau$, the map $\beta^\tau:\Omega \rightarrow L^1([0,\tau],U)$, which is given by $\beta^\tau(\omega)(t)=\beta_t(\omega)$ for every $t \in [0,\tau]$, satisfies $\beta^\tau$ is $\mathcal{F}_\tau-$measurable and
$
    \mathbb{E}^\mathbb{P}\big[\|\beta^\tau\|_{L^1([0,\tau],U)}\big]<+\infty.$
%Moreover, we have
%$$\mathbb{E}^{\alpha \ltimes \mathbb{P}}\left[\tau\right]<+\infty\,\,\,\,\,\mbox{and}\,\,\,\,\mathbb{E}^{\alpha \ltimes \mathbb{P}}\left[\int_0^\tau |\beta_s|^p ds\right] < +\infty.$$
%\marginpar{I changed this and have doubts about the uniquenss of $\beta$ given $\mathbb{P}$.  Clearly the integral of $\beta$ is unique but $\beta$ itself might not be.}
\item The process $W^\beta$ given by
$$W_t^\beta:=X_t-X_0-\int_0^t\beta_s\,ds,\,\,\,\mbox{for every}\,\,t\in \mathbb{R}^+,$$
is the standard Brownian motion, i.e.\ ${W^\beta}_{\#}\mathbb{P}$ is the Wiener measure on $\Omega$ with $W_0=0$. In other words, $X_t$ has the following semimartingale decomposition:
\begin{align}\label{eqn:weak_semimartingale_decomposition}
    X_t= X_0+\int_0^t \beta_s\,ds+W_t^\beta.
\end{align} 
\item $\alpha:\Omega \rightarrow \mathcal{M}(\R^+)$, where $\mathcal{M}(\R^+)$ is the space of measures on $\R^+$, is a randomized stopping time or equivalently, $A_t(\omega):=\alpha(\omega)([0,t])$ is increasing, right continuous, adapted to $\mathbb{F}$, with $A_0\geq 0$ and $\lim_{t\rightarrow \infty} A_t(\omega)=1$.  %Note that $\alpha:\Omega \rightarrow \mathcal{M}(\R^+)$, while $\mathcal{M}(\R^+)$ is the space of measures on $\R^+$.

\end{enumerate}

\noindent \hspace{0.1cm} Let  $(\alpha \ltimes \mathbb{P})(d\tau,d\omega)=\alpha(\omega)(d\tau)\mathbb{P}(d\omega)$ denote the measure on $\R^+\times \Omega$ corresponding to the variable $(\tau,\omega)$.
%with canonical random variable $(T,X)(\tau,\omega)=(\tau,\omega)$.
The constraint $X_\tau \sim_{\alpha \ltimes \mathbb{P}} \nu$ is equivalently defined by ${X_\tau}_\#(\alpha \ltimes \mathbb{P})=\nu$, i.e. we have
\begin{align}\label{eqn:target_weak_stochastic}
     \int_{\Omega}\int_{\R^+} g\big(\omega(\tau)\big)\alpha(\omega)(d\tau)\mathbb{P}(d\omega)=\int_{\R^d}g(y)\nu(dy),\,\,\mbox{for all}\,\,\,g\in C_b(\R^d).
\end{align}\\
Now, we let $\mathcal{A}(\mu,\nu) = \bigg\{(\mathbb{P},\beta,\alpha)\in \mathcal{A}(\mu)\,:\,X_\tau \sim_{\alpha \ltimes \mathbb{P}} \nu\bigg\}$. The cost, defined on $\mathcal{A}(\mu)$ with possible value of $+\infty$, is given by the following:
\begin{align}\label{eqn:cost_weak_stochastic}
    \mathcal{J}_L(\mathbb{P},\beta,\alpha)=&\ \mathbb{E}^{\alpha\ltimes \mathbb{P}}\Big[\int_0^\tau L\big(t,X_t,\beta_t\big)dt\Big]\\
    =&\ \int_{\Omega}\int_{\R^+} \int_0^{\tau} L\big(t,X_t(\omega),\beta_t(\omega)\big)dt\, \alpha(\omega)(d\tau) \mathbb{P}(d\omega).\nn %\\
%    =&\ \int_{\Omega}\int_{\R^+} L\big(t,X_t(\omega),\beta_t(\omega)\big)\big(1-A_t(\omega)\big)dt\, \mathbb{P}(d\omega). \nn
\end{align} %\marginpar{Removed the last line from the display, which I think is wrong.}
%where the second formula follows by integration by parts for functions of bounded variation with $\alpha(d\sigma)=dA_\sigma$, along with $\lim_{\sigma\rightarrow \infty} A_\sigma=1$.
%In the case that the drift is bounded, i.e.\ $U$ is bounded, we define the cost to be infinity if the set of $\beta_t\not\in U$ with $t\leq \tau$ on a set of finite measure.

\noindent \hspace{0.1cm} We can state our primal stochastic transportation problem as the minimization of the stochastic transport cost $\mathcal{J}_L(\mathbb{P},\beta,\alpha)$ among all admissible $(\mathbb{P},\beta,\alpha)\in \mathcal{A}(\mu,\nu)$, i.e.
\begin{align}\label{eqn:primal_weak_stochastic}
    \mathcal{P}_{L}(\mu,\nu)  := \inf\bigg\{\mathcal{J}_L(\mathbb{P},\beta,\alpha)\,:\,\ {(\mathbb{P},\beta,\alpha)\in \mathcal{A}(\mu,\nu)}\bigg\}   
\end{align}
with the convention that $\mathcal{P}_{L}(\mu, \nu) =+\infty$ if $\mathcal{A}(\mu, \nu)=\emptyset$ (in fact, we will prove in Section \ref{sec:attainment-large-p} that, under some assumptions on $\mu$ and $\nu$, this set $\mathcal{A}(\mu,\nu)$ is non-empty).

%\marginpar{Moved the remark here to the last section}

%\begin{remark}\rm {\red Not sure about this remark}
%In Section \ref{sec:hitting_times}, we will consider a strong stochastic formulation of \eqref{eqn:primal_weak_stochastic}, where the SDE   
%$$
%    dX_t=\beta(X_t)dt+dW_t
%$$
%is solved in a strong sense and $\tau$ is a deterministic stopping time such that $X_\tau \sim \nu$ (where the probability measure is fixed so that $W$ is the Wiener measure and $X_0\sim \mu$).  We will address briefly whether we have such a strong solution after obtaining solutions of the dual problem. 
%
%An alternative approach is to consider the `open loop' control where the SDE
%$$
%    dX_t=V_t\, dt+dW_t
%$$
%is understood with $V_t$ is progressively measurable with respect to the filtration of the Brownian motion. This is again equivalent by Girsanov theory under assumptions on $V$ and $\beta$; however, this is complicated by the presence of the randomized stopping time so won't be pursued here.
%\end{remark}

\subsection{The convex stochastic formulation}

% In order to prove existence of a minimizer for (\ref{eqn:primal_weak_stochastic}),

With this formulation, we seek to linearize the functional in (\ref{eqn:cost_weak_stochastic}) by considering probability measures \,$\tilde{\mathbb{P}}$\, on $\tilde{{\Omega}}:=\Omega\times \Omega$ (this idea is due to Haussmann \cite{haussmann1986existence} and, it is used later by Tan \& Touzi \cite{tan2013optimal}).
 % (we note that a similar idea was also used by Kantorovich \cite{Kanto} in order to prove existence of an optimal transport map for the Monge problem \cite{Monge}, his idea was to consider transport plans instead of transport maps); this strategy was also used in \cite{Tan} in order to prove existence and duality for Problem \eqref{stochastic primal}.
   In other words, let $(X,B)$ be the canonical process on $\tilde{{\Omega}}$ (i.e. $(X_t,B_t)(\omega,b)=(\omega(t),b(t))$, for every $(\omega,b) \in \tilde{{\Omega}}$) and let $\tilde{\mathbb{F}}$ be the corresponding canonical filtration. We now have a process
       \begin{equation} \label{eqn:W_convex_stochastic}
         \tilde{W}_t(\omega,b):={X}_t({\omega})-{X}_0(\omega)-{B}_t(b),
     \end{equation}
     which will play the role of $W^\beta$ in the definition of $\mathcal{A}(\mu)$. %to understand the weak  solutions to the SDE.
     We denote by $\tilde{\mathbb{P}}_X$ the projection onto the first component of $\Omega\times\Omega$, and $\tilde{\mathbb{P}}_B$ the projection onto the second component. We say that $(\tilde{\mathbb{P}},\tilde{\alpha})\in\tilde{\mathcal{A}}(\mu)$ if the following conditions hold:
 \begin{enumerate}[label=\roman*)]
     \item
     %For  $\tilde{\mathbb{F}}$ defined above,
     $(\tilde{{\Omega}},\tilde{\mathbb{F}},\tilde{\mathbb{P}})$ is a filtered probability space.
    \item We have ${X}_0\sim_{\tilde{\mathbb{P}}_X} \mu$ which means that ${{X}_0}_\#\tilde{\mathbb{P}}_X=\mu$.
    \item For $\tilde{\mathbb{P}}_B$ almost every $b$, $B$ is differentiable and $\tilde{\beta}={B}'$ is locally integrable, i.e.\ for each $\tau$, the map $\tilde{\beta}^{\,\tau}:\Omega \rightarrow L^1([0,\tau],U)$ defined by $\tilde{\beta}^\tau(b)(t)=\tilde{\beta}_t(b)$ satisfies
    $$
        \mathbb{E}^{\tilde{\mathbb{P}}}\big[\|\tilde{\beta}^\tau\|_{L^1([0,\tau],\R^d)}\big]<+\infty.
    $$
%And,
%$$\mathbb{E}^{\tilde{\alpha} \ltimes \tilde{\mathbb{P}}}\left[\tau\right] < +\infty\,\,\,\,\,\mbox{and}\,\,\,\,\mathbb{E}^{\tilde{\alpha} \ltimes \tilde{\mathbb{P}}}\left[\int_0^\tau |\tilde{\beta}_s|^p ds\right] < +\infty.$$
\item The process $\tilde{W}$ defined in (\ref{eqn:W_convex_stochastic}) is a standard Brownian motion, i.e.\ $\tilde{W}_{\#}\tilde{\mathbb{P}}$ is the Wiener measure on $\Omega$ with $\tilde{W}_0=0$. 
\item  $\tilde{\alpha}:\tilde{{\Omega}} \rightarrow \mathcal{M}(\R^+)$ is a randomized stopping time, equivalently $\tilde{A}_t(\omega,b):=\tilde{\alpha}(\omega,b)([0,t])$ is increasing, right continuous, adapted to $\tilde{\mathbb{F}}$, with $\tilde{A}_0\geq 0$ and $\lim_{t\rightarrow \infty} \tilde{A}_t(\omega,b)=1$. \\
\end{enumerate}

\noindent \hspace{0.1cm} We also define $\tilde{\alpha}\ltimes \tilde{\mathbb{P}}$ to be the associated probability measure on $\R^+\times \Omega\times \Omega$\, and \,$(\tilde{\alpha}\ltimes \tilde{\mathbb{P}})_{T,X}$ its projection onto the first two components.
The constraint $X_\tau \sim_{(\tilde{\alpha}\ltimes\tilde{\mathbb{P}})_{T,X}} \nu$ is similarly defined by
\begin{align}\label{eqn:target_convex_stochastic}
     \int_{\Omega}\int_{\Omega}\int_{\R^+} g\big(\omega(\tau)\big)\tilde{\alpha}(\omega,b)(d\tau)\tilde{\mathbb{P}}(d\omega,db)=\int_{\R^d}g(y)\nu(dy),\,\,\,\mbox{for all}\,\,\, g\in C_b(\R^d).
\end{align}
Now, we say that $(\tilde{\mathbb{P}},\tilde{\alpha})\in \tilde{\mathcal{A}}(\mu,\nu)$ if $(\tilde{\mathbb{P}},\tilde{\alpha})\in \tilde{\mathcal{A}}(\mu)$ and $X_\tau \sim_{(\tilde{\alpha}\ltimes\tilde{\mathbb{P}})_{T,X}} \nu$. The cost is similarly given by the following:
\begin{align}\label{eqn:cost}
    \tilde{\mathcal{J}}_L(\tilde{\mathbb{P}},\tilde{\alpha})=&\ \mathbb{E}^{\tilde{\alpha}\ltimes \tilde{\mathbb{P}}}\Big[\int_0^\tau L\big(t,{X}_t,\tilde{\beta}_t\big)dt\Big]\\
    =&\ \int_{{\Omega}}\int_\Omega\int_{\R^+} \int_0^{\tau} L\big(t,{X}_t({\omega}),\tilde{\beta}_t(b)\big)dt\, \tilde{\alpha}(\omega,b)(d\tau) \tilde{\mathbb{P}}(d\omega,db).\nn
%    \\
%    =&\ \int_{{\Omega}}\int_\Omega\int_{\R^+} L\big(t,X_t({\omega}),\tilde{\beta}_t(b)\big)\big(1-\tilde{A}_t(\omega,b)\big)dt\, \tilde{\mathbb{P}}(d\omega,db).\nn
\end{align}
%\marginpar{Removed the last line, which I think is wrong,  from the display.}
 
\noindent \hspace{0.1cm} Then, we consider the following relaxation of (\ref{eqn:primal_weak_stochastic}) (again, we let  $\tilde{\mathcal{P}}_{L}(\mu, \nu) =+\infty$ if $\tilde{\mathcal{A}}(\mu, \nu)$ is empty):
\begin{equation}\label{eqn:primal_convex_stochastic}
    \tilde{\mathcal{P}}_{L}(\mu,\nu)  := \inf\Big\{\tilde{\mathcal{J}}_L(\tilde{\mathbb{P}},\tilde{\alpha})\,:\,\ (\tilde{\mathbb{P}},\tilde{\alpha})\in \tilde{\mathcal{A}}(\mu,\nu)\Big\}.
 \end{equation}
 
\noindent \hspace{0.1cm} We now show that problems (\ref{eqn:primal_convex_stochastic}) and (\ref{eqn:primal_weak_stochastic}) are equivalent in the sense that the two problems have the same minimal values. First, we show that the convex stochastic problem (\ref{eqn:primal_convex_stochastic})  is a relaxation of the weak stochastic formulation (\ref{eqn:primal_weak_stochastic}), and then show that there is a projection from the convex stochastic problem back onto the weak stochastic formulation that does not increase the cost. More precisely, we have the following:

 \begin{proposition} \label{prop:stochastic_relaxation} With the above notations, the following hold:
 \begin{enumerate}
\item  For every $(\mathbb{P},\beta,\alpha) \in \mathcal{A}(\mu)$, there exists $(\tilde{\mathbb{P}},\tilde{\alpha})\in \tilde{\mathcal{A}}(\mu)$ with $(\tilde{\alpha}\ltimes\tilde{\mathbb{P}})_{T,X}=\alpha\ltimes \mathbb{P}$ and $\mathcal{J}_L(\mathbb{P},\beta,\alpha)= \tilde{\mathcal{J}}_L(\tilde{\mathbb{P}},\tilde{\alpha})$.

\item Conversely, for every $(\tilde{\mathbb{P}},\tilde{\alpha}) \in \tilde{\mathcal{A}}(\mu)$, we can find $\alpha,\,\beta$ such that $(\tilde{\mathbb{P}}_X,\beta,{\alpha})\in \mathcal{A}(\mu)$ with $(\tilde{\alpha}\ltimes\tilde{\mathbb{P}})_{T,X}=\alpha\ltimes \tilde{\mathbb{P}}_X$ and \,$\mathcal{J}_L(\tilde{\mathbb{P}}_X,\beta,\alpha) \leq \tilde{\mathcal{J}}_L(\tilde{\mathbb{P}},\tilde{\alpha})$.

\item  In particular, we have $\mathcal{P}_L(\mu,\nu)=\tilde{\mathcal{P}}_L(\mu,\nu)$.
\end{enumerate}
 \end{proposition}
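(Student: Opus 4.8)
The plan is to prove the two inequalities $\tilde{\mathcal P}_L(\mu,\nu)\le\mathcal P_L(\mu,\nu)$ and $\mathcal P_L(\mu,\nu)\le\tilde{\mathcal P}_L(\mu,\nu)$ separately, by the two constructions in (1) and (2), and then read off (3).

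For (1) I would argue by an explicit push-forward. Given $(\mathbb{P},\beta,\alpha)\in\mathcal A(\mu)$, set $b_\omega(t):=\int_0^t\beta_s(\omega)\,ds$, define $\Psi:\Omega\to\Omega\times\Omega$ by $\Psi(\omega)=(\omega,b_\omega)$, and let $\tilde{\mathbb{P}}:=\Psi_\#\mathbb{P}$ and $\tilde\alpha(\omega,b):=\alpha(\omega)$. Then $B=b_\omega$ is absolutely continuous with $B'=\beta$ ($\mathbb{P}$-a.s.), so (iii) holds with $\tilde\beta=\beta$; since $\tilde{W}_t\circ\Psi=\omega(t)-\omega(0)-b_\omega(t)=W_t^\beta(\omega)$ we get $\tilde{W}_\#\tilde{\mathbb{P}}=W^\beta_\#\mathbb{P}=$ Wiener measure, giving (iv); and $\tilde A_t(\omega,b)=A_t(\omega)$ inherits monotonicity, right-continuity, the normalizations and adaptedness (under $\tilde{\mathbb{P}}$ the second coordinate is an $\mathbb F$-measurable functional of the first, so $\mathcal F_t\subseteq\tilde{\mathcal F}_t$ suffices), giving (v). A change of variables along $\Psi$ then yields $(\tilde\alpha\ltimes\tilde{\mathbb{P}})_{T,X}=\alpha\ltimes\mathbb{P}$ and $\tilde{\mathcal J}_L(\tilde{\mathbb{P}},\tilde\alpha)=\mathcal J_L(\mathbb{P},\beta,\alpha)$; the target constraint is clearly preserved, whence $\tilde{\mathcal P}_L\le\mathcal P_L$.

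For (2), given $(\tilde{\mathbb{P}},\tilde\alpha)\in\tilde{\mathcal A}(\mu)$, I would set $\mathbb{P}:=\tilde{\mathbb{P}}_X$ and recover the drift as (a progressively measurable version of) the $\mathbb F$-optional projection $\beta_t:=\mathbb{E}^{\tilde{\mathbb{P}}}[\tilde\beta_t\mid\mathcal F_t]$, whose local integrability is inherited from $\tilde\beta$. To see that $W_t^\beta:=X_t-X_0-\int_0^t\beta_s\,ds$ has Wiener law, one checks it is an $\mathbb F$-martingale under $\mathbb{P}$: for bounded $\mathcal F_s$-measurable $\Phi$ and $s\le t$,
\[
\mathbb{E}^{\tilde{\mathbb{P}}}\big[(W_t^\beta-W_s^\beta)\Phi\big]=\mathbb{E}^{\tilde{\mathbb{P}}}\big[(X_t-X_s-B_t+B_s)\Phi\big]=\mathbb{E}^{\tilde{\mathbb{P}}}\big[(\tilde{W}_t-\tilde{W}_s)\Phi\big]=0,
\]
the middle equality using $\int_s^t\mathbb{E}^{\tilde{\mathbb{P}}}[\beta_r\Phi]\,dr=\int_s^t\mathbb{E}^{\tilde{\mathbb{P}}}[\tilde\beta_r\Phi]\,dr=\mathbb{E}^{\tilde{\mathbb{P}}}[(B_t-B_s)\Phi]$ (as $\Phi$ is $\mathcal F_r$-measurable for $r\ge s$) and the last using that $\tilde{W}$ is an $\tilde{\mathbb F}$-Brownian motion; the same computation for $W_i^\beta W_j^\beta-\delta_{ij}t$ gives $\langle W^\beta\rangle_t=t\,I_d$, so L\'evy's characterization applies. (If only the distributional hypothesis on $\tilde{W}$ is used, this step is replaced by a Markovian-projection/mimicking argument in the spirit of Gy\"{o}ngy and Brunick--Shreve, which works precisely because the diffusion coefficient is the identity.) The stopping rule should be obtained by projecting $\tilde\alpha$ onto $\mathbb F$: disintegrating $(\tilde\alpha\ltimes\tilde{\mathbb{P}})_{T,X}$ over $\mathbb{P}$ produces a kernel $\alpha(\omega)(d\tau)$, and one must verify that $t\mapsto A_t(\omega)=\alpha(\omega)([0,t])$ has a version that is increasing, right-continuous, $\mathbb F$-adapted, and tends to $1$; this uses projection of the random measure $dA$ onto $\mathbb F$ and must be carried out compatibly with the drift projection. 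Granting this, $(\mathbb{P},\beta,\alpha)\in\mathcal A(\mu)$, $(\tilde\alpha\ltimes\tilde{\mathbb{P}})_{T,X}=\alpha\ltimes\mathbb{P}$, and the cost inequality $\mathcal J_L(\mathbb{P},\beta,\alpha)\le\tilde{\mathcal J}_L(\tilde{\mathbb{P}},\tilde\alpha)$ follows by writing both costs as time integrals of $L(t,X_t,\cdot)$ against the ``not-yet-stopped'' measure and applying conditional Jensen to the convex map $u\mapsto L(t,X_t,u)$. Then (3) is immediate: the target constraint $X_\tau\sim\nu$ depends only on $(\tilde\alpha\ltimes\tilde{\mathbb{P}})_{T,X}$, hence is preserved by both constructions, so $\mathcal P_L\le\tilde{\mathcal P}_L$, and with (1) this gives equality (both infima being $+\infty$ simultaneously when the admissible sets are empty).

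The main obstacle is entirely in (2): one must replace the ``pre-recorded'' drift $B'$ by a genuine $\mathbb F$-adapted drift without destroying the Brownian character of the residual noise and, simultaneously, project the randomized stopping time onto the coarser filtration $\mathbb F$ so that it stays an adapted, right-continuous randomized stopping time while the time-integrated cost remains controlled by Jensen; the interaction of these two projections through the integral $\int_0^\tau$ is the delicate point. Steps (1) and (3) are essentially bookkeeping.
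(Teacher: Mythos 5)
Your proposal follows essentially the same route as the paper: part (1) is the identical push-forward $\tilde{\mathbb{P}}=(X,B^\beta)_\#\mathbb{P}$ with $\tilde\alpha(\omega,b)=\alpha(\omega)$, and part (2) is the identical projection $\mathbb{P}=\tilde{\mathbb{P}}_X$, $\beta_t=\mathbb{E}^{\tilde{\mathbb{P}}}[\tilde\beta_t\mid\tilde{\mathcal F}^X_t]$, disintegration of $(\tilde\alpha\ltimes\tilde{\mathbb{P}})_{T,X}$ to get $\alpha$, and conditional Jensen for the cost. The only difference is cosmetic: where you sketch the L\'evy-characterization argument that the innovation process $W^\beta$ is again a Brownian motion, the paper simply cites Wong's innovations theorem, and the adaptedness/right-continuity checks for the projected $\alpha$ that you flag as delicate are dismissed as ``straightforward'' in the paper as well.
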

 \begin{proof}
 Take $(\mathbb{P},\beta,\alpha) \in \mathcal{A}(\mu)$ and define $B^\beta_t(\omega):=\int_0^t\beta_s(\omega)\,ds$.  We set $\tilde{\mathbb{P}}=(X,B^\beta)_{\#}\mathbb{P}$ and $\tilde{\alpha}(\omega,b)=\alpha(\omega)$.  We can easily check the five properties defining $\tilde{\mathcal{A}}(\mu)$:
 \begin{enumerate}[label=\roman*)]
     \item   $(\tilde{\Omega},\tilde{\mathbb{F}},\tilde{\mathbb{P}})$ is clearly a filtered probability space.
     %as the pushforward of an adapted map.
     \item Since $\tilde{\mathbb{P}}_X=\mathbb{P}$,  we are also dealing with the same initial distribution (${{X}_0}_\#\tilde{\mathbb{P}}_X={{X}_0}_{\#}\mathbb{P}=\mu$).
     \item We have that
     $$\mathbb{E}^{\tilde{\mathbb{P}}}\big[\|\tilde{\beta}^\tau\|_{L^1([0,\tau],\R^d)}\big]=\mathbb{E}^{{\mathbb{P}}}\big[\|{\beta}^\tau\|_{L^1([0,\tau],\R^d)}\big]<+\infty.
     $$
% with
%$$\mathbb{E}^{\tilde{\alpha} \ltimes \tilde{\mathbb{P}}}\left[\tau\right]=\mathbb{E}^{\alpha \ltimes \mathbb{P}}\left[\tau\right] < +\infty\,\,\,\,\,\mbox{and}\,\,\,\,\mathbb{E}^{\tilde{\alpha} \ltimes \tilde{\mathbb{P}}}\left[\int_0^\tau |\tilde{\beta}_s|^p ds\right]=\mathbb{E}^{\alpha \ltimes {\mathbb{P}}}\left[\int_0^\tau |\beta_s|^p ds\right] < +\infty.$$
     \item Note that $\tilde{W}(\omega,B^\beta(\omega))=W^\beta(\omega)$ and therefore $\tilde{W}_{\#}\tilde{\mathbb{P}}={W^\beta}_{\#}\mathbb{P}$ is the Wiener measure on $\Omega$ with $\tilde{W}_0=0$.
     %and the result follows by properties of the pushforward.
     \item It is easy to check that $\tilde{\alpha}$ is still a randomized stopping time on the extended space.
\end{enumerate}
 On the other hand, we also have $(\tilde{\alpha}\ltimes \tilde{\mathbb{P}})_{T,X}=\alpha \ltimes \mathbb{P}$. From the definitions of $\mathcal{J}_L$ and $\tilde{\mathcal{J}}_L$, we have
\begin{align*}
    \tilde{\mathcal{J}}_L(\tilde{\mathbb{P}},\tilde{\alpha})=&\ \int_{{\Omega}}\int_\Omega\int_{\R^+} \int_0^\tau L\big(t,X_t(\omega),\tilde{\beta}_t(b)\big)dt\, {\alpha}(\omega)(d\tau) (X,B^\beta)_{\#}\mathbb{P}(d\omega,db)\\
    =&\  \int_{\Omega}\int_{\R^+} \int_0^\tau L\big(t,X_t(\omega),\beta_t(\omega)\big)dt\, \alpha(\omega)(d\tau)\mathbb{P}(d\omega)
\end{align*}
and so, it follows that $\mathcal{J}_L(\mathbb{P},\beta,\alpha)= \tilde{\mathcal{J}}_L(\tilde{\mathbb{P}},\tilde{\alpha})$.

For the second claim, take $(\tilde{\mathbb{P}},\tilde{\alpha})\in \tilde{\mathcal{A}}(\mu)$ and set $\mathbb{P}=\tilde{\mathbb{P}}_X$. We define $\alpha$ by disintegration in such a way that $(\tilde{\alpha}\ltimes \tilde{\mathbb{P}})_{T,X}=\alpha \ltimes \mathbb{P}$ and $\beta$ by the conditional expectation $\beta_t(\omega):=\mathbb{E}^{\tilde{\mathbb{P}}}\big[\tilde{\beta}_t\big| \tilde{\mathcal{F}}^X_t\big](\omega)$ (here, $\tilde{\mathcal{F}}^X_t$ is the filtration generated by process
$X$).  Again, we have the following five properties:
 \begin{enumerate}[label=\roman*)]
     \item  $(\Omega,\mathbb{F},\mathbb{P})$ is a filtered probability space.
     %as the projection.
     \item Since $\tilde{\mathbb{P}}_X=\mathbb{P}$, we still have the same initial distribution.
     \item By Jensen's inequality, we have that
     $$
         \mathbb{E}^{{\mathbb{P}}}\big[\|\mathbb{E}^{\tilde{\mathbb{P}}}\big[\tilde{\beta}^\tau\big|\tilde{\mathcal{F}}^X_t\big]\|_{L^1([0,\tau],\R^d)}\big]\leq \mathbb{E}^{\tilde{\mathbb{P}}}\big[\|\tilde{\beta}^\tau\|_{L^1([0,\tau],\R^d)}\big]<+\infty.
     $$
% with
%$$     \mathbb{E}^{\alpha \ltimes \mathbb{P}}\left[\tau\right]=    \mathbb{E}^{\tilde{\alpha} \ltimes \tilde{\mathbb{P}}}\left[\tau\right] < +\infty\,\,\,\,\,\mbox{and}\,\,\,\,\mathbb{E}^{\alpha \ltimes {\mathbb{P}}}\left[\int_0^\tau |\beta_s|^p ds\right]\leq \mathbb{E}^{\tilde{\alpha} \ltimes \tilde{\mathbb{P}}}\left[\int_0^\tau |\tilde{\beta}_s|^p ds\right] < +\infty.$$
     \item From \cite[Theorem 4.3]{Wong}, we have that the following process is a standard Brownian motion:
 \begin{align*}
     W^\beta_t:=&X_t-X_0 -\int_0^t\mathbb{E}^{\tilde{\mathbb{P}}}\big[\tilde{\beta}_s\big| \tilde{\mathcal{F}}^X_s\big] ds.
 \end{align*}
%\noindent  and, since the difference of $\tilde{W}_t$ and $W^\beta_t$ is a martingale with finite variation, then we infer that $W^\beta$
%is a standard Brownian motion.
%(see {\red {\cite[Theorem 4.3]{Wong}}}).
%\begin{align*}
%    \mathbb{E}\big[ W^\beta_t -W^\beta_s\big]
%\end{align*}
     \item Finally, it is straightforward to verify that ${\alpha}$ is a randomized stopping time. %{\red for instance by duality.}\\
\end{enumerate}
\noindent \hspace{0.1cm} On the other hand, using the definitions of $\mathbb{P}$, $\beta$ and $\alpha$ and Jensen's inequality, we get that
\begin{align*}
\mathcal{J}_L(\mathbb{P},\beta,\alpha) \leq \tilde{\mathcal{J}}_L(\tilde{\mathbb{P}},\tilde{\alpha}).
\end{align*}
Since we have identity of the distributions $\alpha\ltimes \mathbb{P}=(\tilde{\alpha}\ltimes \tilde{\mathbb{P}})_{T,X}$, it follows that the values of the primal weak stochastic and convex stochastic problems are equal. $\qedhere$
 \end{proof}

\noindent \hspace{0.1cm}
The convex problem \eqref{eqn:primal_convex_stochastic} satisfies a compactness property for a suitable topology on % The topology we consider is where 
 the measures \,$\tilde{\alpha}\ltimes \tilde{\mathbb{P}}$\, on \,$\R^+\times \Omega\times \Omega$. We note that a simple truncation of the stopping time allows us to restrict to a compact domain in time and space.  We define the truncation for $(\mathbb{P},\beta,\alpha)\in \mathcal{A}(\mu)$ by considering the randomized stopping corresponding to $\tau \wedge \sup\{\tau;\,\tau\leq T,\,|X_\tau|\leq R\}$, i.e.\ $\alpha^{T,R}(\omega):={S^{T,R}_\omega}_\sharp \alpha$ where $S^{T,R}_\omega(\tau)=\tau \wedge \sup\{t;\ t\leq T,\,|X_t(\omega)|\leq R\}$. We set $S^{T,R}(\tau,w):=(S^{T,R}_w(\tau),w)$.
\begin{lemma}\label{lem:truncation}
	For any $(\mathbb{P},\beta,\alpha)\in \mathcal{A}(\mu)$, the truncation with \,$T,R\in \R^+$ satisfies $(\mathbb{P},\beta,\alpha^{T,R})\in \mathcal{A}(\mu)$ with $\alpha^{T,R}$ is supported on $[0,T]$ and %when $0<t\leq \tau$, 
	$|X_t(w)|\leq R$ almost surely.
	Furthermore,
	$$
		\lim_{T,R\rightarrow \infty} \mathcal{J}_L(\mathbb{P},\beta,\alpha^{T,R})=\mathcal{J}_L(\mathbb{P},\beta,\alpha), 
	$$
	and for all $g\in C_b(\R^d)$,
	$$
		\lim_{T,R\rightarrow \infty} \mathbb{E}^{\alpha^{T,R}\ltimes\mathbb{P}}\big[g(X_\tau)\big]=\mathbb{E}^{\alpha\ltimes\mathbb{P}}\big[g(X_\tau)\big].
	$$
\end{lemma}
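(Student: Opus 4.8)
The plan is a direct verification. I would first pin down the map $S^{T,R}_\omega$: one has $S^{T,R}_\omega(\tau)=\tau\wedge\sigma_{T,R}(\omega)$, where $\sigma_{T,R}(\omega)=T\wedge\inf\{s:\ |X_s(\omega)|\ge R\}$ is the first exit time of the path $\omega$ from the compact cylinder $[0,T]\times\bar B_R$. Since $\{\sigma_{T,R}\le t\}=\{\sup_{s\le t}|X_s|\ge R\}\in\mathcal F_t$ for $t<T$ and this event is trivial for $t\ge T$, $\sigma_{T,R}$ is an $\mathbb F$-stopping time; moreover it is non-decreasing in both $T$ and $R$, and since a continuous path is bounded on every compact time interval we get $\sigma_{T,R}(\omega)\uparrow+\infty$ for $\mathbb P$-a.e.\ $\omega$ (indeed $\sigma_{T,R}(\omega)=T$ once $R>\sup_{[0,T]}|\omega|$). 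Hence $S^{T,R}_\omega(\tau)\uparrow\tau$ pointwise as $T,R\to\infty$.

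Next I would check that $(\mathbb P,\beta,\alpha^{T,R})\in\mathcal A(\mu)$. Properties (i)--(iv) concern only $\mathbb P$ and $\beta$, which are unchanged, so they are inherited verbatim. For (v), pushing $\alpha(\omega)$ forward by $S^{T,R}_\omega$ and using $\min\{\tau,\sigma\}\le t\iff\{\tau\le t\}\cup\{\sigma\le t\}$ yields the explicit formula
\[
A^{T,R}_t(\omega)=A_t(\omega)+\big(1-A_t(\omega)\big)\,\mathbf{1}\{\sigma_{T,R}(\omega)\le t\},
\]
from which monotonicity and right-continuity in $t$, adaptedness (both $A_t$ and $\mathbf{1}\{\sigma_{T,R}\le t\}$ are $\mathcal F_t$-measurable), $A^{T,R}_0\ge0$, and $\lim_{t\to\infty}A^{T,R}_t=1$ (it equals $1$ for $t\ge T$) are all routine. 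The support bound $\spt\,\alpha^{T,R}(\omega)\subset[0,T]$ and the claim that the stopped path stays in $\bar B_R$ (in particular $|X_\tau|\le R$) follow at once from $S^{T,R}_\omega(\tau)\le\sigma_{T,R}(\omega)\le T$ and the definition of the exit time, using continuity of the path.

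For the two limits I would use the change of variables $\alpha^{T,R}(\omega)={S^{T,R}_\omega}_\#\alpha(\omega)$, which rewrites the cost as $\int_\Omega\int_{\R^+}\big(\int_0^{S^{T,R}_\omega(\tau)}L(t,X_t(\omega),\beta_t(\omega))\,dt\big)\,\alpha(\omega)(d\tau)\,\mathbb P(d\omega)$; since $L\ge0$ and $S^{T,R}_\omega(\tau)\uparrow\tau$, the integrand increases pointwise to $\int_0^\tau L(t,X_t,\beta_t)\,dt$, so monotone convergence gives $\mathcal J_L(\mathbb P,\beta,\alpha^{T,R})\to\mathcal J_L(\mathbb P,\beta,\alpha)$ in $[0,+\infty]$. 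Likewise $\mathbb E^{\alpha^{T,R}\ltimes\mathbb P}[g(X_\tau)]=\int_\Omega\int_{\R^+}g\big(X_{S^{T,R}_\omega(\tau)}(\omega)\big)\,\alpha(\omega)(d\tau)\,\mathbb P(d\omega)$, and continuity of $\omega$ with $S^{T,R}_\omega(\tau)\to\tau$ gives $g(X_{S^{T,R}_\omega(\tau)}(\omega))\to g(X_\tau(\omega))$ for $g\in C_b(\R^d)$; dominated convergence (dominant $\|g\|_\infty$) then yields the second limit. Here $T,R\to\infty$ is a limit along a directed set, but since every quantity is monotone in $(T,R)$ this is harmless. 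I expect the one genuinely delicate point to be correctly unwinding the already-randomized $\alpha$ through $S^{T,R}_\omega$ to see that $\alpha^{T,R}$ is again an adapted randomized stopping time; the rest is routine bookkeeping.
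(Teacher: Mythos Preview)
Your proof is correct and follows the same outline as the paper's: check that properties (i)--(iv) are inherited since $\mathbb P$ and $\beta$ are unchanged, verify (v) for the pushed-forward randomized stopping time, then invoke monotone convergence (using $L\ge 0$ and $S^{T,R}_\omega(\tau)\uparrow\tau$) for the cost and dominated convergence for the end measure. The paper's proof is a terse paragraph conveying exactly these steps.

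One difference worth noting: you take $\sigma_{T,R}=T\wedge\inf\{s:|X_s|\ge R\}$, the \emph{first} exit time from $\bar B_R$, whereas the paper's definition of $S^{T,R}_\omega$ literally reads $\sup\{t\le T:|X_t(\omega)|\le R\}$. Your choice is the right one for the adaptedness argument: with the paper's $\sup$, the event $\{\sigma\le t\}$ depends on whether the path re-enters $\bar B_R$ after time $t$ and hence is not $\mathcal F_t$-measurable in general, so your explicit formula $A^{T,R}_t=A_t+(1-A_t)\mathbf 1\{\sigma_{T,R}\le t\}$ would fail to be adapted. The paper glosses over this, merely asserting that ``$S^{T,R}$ is continuous and maps $[0,\tau]\times\Omega$ into $[0,\tau]\times\Omega$''; your verification of (v) is more careful and, with the first-exit interpretation, actually goes through.
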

\begin{proof}
 Since the range of $S^{T,R}$ is contained in $[0,T]\times \Omega$, it is clear that \,$\alpha^{T,R}$ is supported on $[0,T]$ and %when $t>0$, $S^{T,R}$ maps to paths with $|X_t|\leq R$ so for $0<t\leq \tau$,  
 $|X_t(w)|\leq R$ holds $(\alpha^{T,R}\ltimes \mathbb{P})$-almost surely. We then have that $(\mathbb{P},\beta,\alpha^{T,R})\in \mathcal{A}(\mu)$ since $S^{T,R}$ is continuous and maps $[0,\tau]\times \Omega$ into $[0,\tau]\times \Omega$, for all $\tau \in \mathbb{R}^+$, and leaves $\mathbb{P}$ and $\beta$ unchanged so that the decomposition (\ref{eqn:weak_semimartingale_decomposition}) holds.  Furthermore,
 %as $T,R\rightarrow \infty$,
 %since ${T,R}\mapsto \mathcal{J}_L(\mathbb{P},\beta,\alpha^{T,R})$ is increasing in $T,R$,  
 it follows by the monotone convergence theorem that the limit of $\mathcal{J}_L(\mathbb{P},\beta,\alpha^{T,R})$ is $\mathcal{J}_L(\mathbb{P},\beta,\alpha)$ and that
 %Finally, as $T,R\rightarrow \infty$ the probability that $\tau>{T}$ or $|X_\tau|>R$ goes to zero so  
 the end measure is also recovered. $\qedhere$
\end{proof}
 
 Before we have compactness we also need a truncation of the measure $\tilde{\mathbb{P}}$ so that weak limits do not lead to measures associated with badly behaved drifts after the end-time.
\begin{proposition}\label{prop:convex_compact}
    The set $\{\tilde{\alpha} \ltimes\tilde{\mathbb{P}} \,:\,(\tilde{\mathbb{P}},\tilde{\alpha}) \in \tilde{\mathcal{A}}(\mu)\}$ is convex. 
    Moreover, given any sequence $(\tilde{\mathbb{P}}^i,\tilde{\alpha}^i)\in \tilde{\mathcal{A}}(\mu,\nu^i)$ with $\nu^i \rightharpoonup \nu$, there is a %with $\tilde{\mathcal{J}}_L(\tilde{\mathbb{P}}^{i},\tilde{\alpha}^{i})\leq M$,
%    we have a subsequence and
$(\tilde{\mathbb{P}},\tilde{\alpha})\in \tilde{\mathcal{A}}(\mu,\nu)$ such that %$(\tilde{\alpha}^{i_k}\ltimes \tilde{\mathbb{P}}^{i_k})_{T,X}\rightharpoonup (\tilde{\alpha}\ltimes \tilde{\mathbb{P}})_{T,X}$
%    and
$\tilde{\mathcal{J}}_L(\tilde{\mathbb{P}},\tilde{\alpha}) \leq \liminf_{i\rightarrow \infty} \tilde{\mathcal{J}}_L(\tilde{\mathbb{P}}^{i},\tilde{\alpha}^{i})$.
\end{proposition}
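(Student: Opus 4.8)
I would prove the two assertions separately. For convexity, given $(\tilde{\mathbb P}^0,\tilde\alpha^0),(\tilde{\mathbb P}^1,\tilde\alpha^1)\in\tilde{\mathcal A}(\mu)$ and $\lambda\in(0,1)$, I set $\tilde{\mathbb P}:=\lambda\tilde{\mathbb P}^0+(1-\lambda)\tilde{\mathbb P}^1$, let $\ell^j:=d\tilde{\mathbb P}^j/d\tilde{\mathbb P}$ (so $\lambda\ell^0+(1-\lambda)\ell^1=1$ $\tilde{\mathbb P}$-a.s.), and define the mixed randomized stopping time via $\tilde A_t:=\lambda\,\ell^0\,\tilde A^0_t+(1-\lambda)\,\ell^1\,\tilde A^1_t$. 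Testing against functions of $(\tau,\omega,b)$ shows at once that $\tilde\alpha\ltimes\tilde{\mathbb P}=\lambda(\tilde\alpha^0\ltimes\tilde{\mathbb P}^0)+(1-\lambda)(\tilde\alpha^1\ltimes\tilde{\mathbb P}^1)$, so it only remains to verify $(\tilde{\mathbb P},\tilde\alpha)\in\tilde{\mathcal A}(\mu)$. Conditions ii)--iv) are inherited by linearity (the initial law is $\mu$, the bound in iii) is affine, and $\tilde W_\#\tilde{\mathbb P}$ is Wiener because it is for each $\tilde{\mathbb P}^j$), and monotonicity, right-continuity, $\tilde A_0\ge0$, $\tilde A_\infty=1$ of $\tilde A$ are clear. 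The only point needing care is adaptedness in v): since $\ell^j$ is $\tilde{\mathcal F}_\infty$-measurable but generally not $\tilde{\mathcal F}_t$-measurable, I would enlarge the filtration to $\tilde{\mathcal F}_t\vee\sigma(\ell^0,\ell^1)$ --- harmless because conditions iii)--iv) do not involve the filtration and the convex formulation imposes no progressive measurability on $\tilde\beta=B'$ --- or equivalently realize the mixture on $\tilde\Omega\times\{0,1\}$ with the scenario label adjoined at time $0$.

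\textbf{Compactness: truncation, tightness, extraction.} For the second assertion I may assume $\sup_i\tilde{\mathcal J}_L(\tilde{\mathbb P}^i,\tilde\alpha^i)<\infty$, after passing to a subsequence realizing the $\liminf$. I would first apply the truncation of Lemma~\ref{lem:truncation}, together with the truncation of $\tilde{\mathbb P}$ indicated before the statement --- which I implement by freezing the drift at the stopping time and letting $X$ continue as a pure Brownian increment thereafter, an operation that leaves $\tilde W$, the cost \eqref{eqn:cost}, the stopping time and the end distribution unchanged, and makes the drift globally controlled: $\mathbb E^{\tilde\alpha^i\ltimes\tilde{\mathbb P}^i}\big[\int_0^\infty|\tilde\beta^i_t|^p\,dt\big]\le c^{-1}\tilde{\mathcal J}_L(\tilde{\mathbb P}^i,\tilde\alpha^i)$ in the superlinear case \eqref{eqn:L-assumption-1} (resp.\ $|\tilde\beta^i|$ uniformly bounded when $U$ is bounded). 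I then show the measures $Q^i:=\tilde\alpha^i\ltimes\tilde{\mathbb P}^i$ tight on $\R^+\times\Omega\times\Omega$: the $\tau$-marginals because $c\,\mathbb E^{Q^i}[\tau]\le\mathbb E^{Q^i}\big[\int_0^\tau L\,dt\big]\le\sup_i\tilde{\mathcal J}_L$ in the superlinear case (in the bounded-$U$ case one also uses the time-truncation of Lemma~\ref{lem:truncation} and a diagonal argument); and the two $\Omega$-marginals because $\tilde W_\#\tilde{\mathbb P}^i$ is Wiener measure, hence tight, while $B_t=\int_0^t\tilde\beta^i_s\,ds$ satisfies $|B_t-B_s|\le|t-s|^{1-1/p}\big(\int_0^T|\tilde\beta^i_r|^p\,dr\big)^{1/p}$ (resp.\ a Lipschitz bound), so that --- by Markov's inequality applied to $\int_0^T|\tilde\beta^i_r|^p\,dr$ --- the paths $X=X_0+B+\tilde W$ and $B$ are uniformly equicontinuous on compact time sets with probability close to one. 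By Prokhorov I extract a subsequence with $Q^i\rightharpoonup Q$ on $\R^+\times\Omega\times\Omega$.

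\textbf{Passing to the limit.} The maps $X_0$, $(\omega,b)\mapsto\tilde W=X-X_0-B$ from \eqref{eqn:W_convex_stochastic}, and $(\tau,\omega,b)\mapsto\omega(\tau)$ are all continuous for the locally uniform topology, so the continuous mapping theorem gives $X_0\sim_{\tilde{\mathbb P}}\mu$ (condition ii)), $\tilde W_\#\tilde{\mathbb P}$ Wiener (condition iv)), and $\int g\big(\omega(\tau)\big)\,Q(d\tau,d\omega,db)=\lim_i\int g\,d\nu^i=\int g\,d\nu$ for all $g\in C_b(\R^d)$, i.e.\ $X_\tau\sim\nu$. Condition iii) and its integrability requirement pass to the limit by lower semicontinuity of $b\mapsto\int_0^T|\dot b|^p\,dt$ (with value $+\infty$ off $W^{1,p}$) together with the Portmanteau theorem, forcing $Q$-a.e.\ $b$ to be absolutely continuous with $\dot b\in L^p$ (resp.\ Lipschitz with $\dot b\in U$). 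Writing $\tilde\alpha$ for the disintegration of $Q$ over its $(\omega,b)$-marginal $\tilde{\mathbb P}$, condition v) --- that $\tilde\alpha$ be a genuine randomized stopping time --- is obtained from the stability of randomized stopping times under weak convergence. Finally, $\Phi(\tau,\omega,b):=\int_0^\tau L\big(t,\omega(t),\dot b(t)\big)\,dt$ is nonnegative and lower semicontinuous on $\R^+\times\Omega\times\Omega$ --- convexity of $L$ in the drift together with the uniform $L^p$/$L^\infty$ bound on the $\dot b^i$ precludes concentration of the drift --- so the lower-semicontinuous half of Portmanteau yields $\tilde{\mathcal J}_L(\tilde{\mathbb P},\tilde\alpha)=\int\Phi\,dQ\le\liminf_i\int\Phi\,dQ^i=\liminf_i\tilde{\mathcal J}_L(\tilde{\mathbb P}^i,\tilde\alpha^i)$, which is the desired conclusion.

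\textbf{Main obstacle.} The one genuinely delicate step is condition v) in the limit: that the disintegration $\tilde\alpha$ of the weak limit $Q$ is still adapted, hence a randomized stopping time, even though the base measures $\tilde{\mathbb P}^i$ vary with $i$. I expect this to require the classical compactness/stability property of (randomized) stopping times under weak convergence --- in the spirit of Baxter--Chacon and of the relaxed-control arguments of Tan--Touzi \cite{tan2013optimal}, and as used for the driftless Skorokhod problem in \cite{GKPS} --- and it is precisely this feature that motivates working with the convex stochastic formulation \eqref{eqn:primal_convex_stochastic}. A secondary, bookkeeping-type difficulty is securing tightness of the $\tau$-marginals in the bounded-drift case, where the lower bound on $L$ does not control $\mathbb E[\tau]$ and one must instead invoke the time-truncation of Lemma~\ref{lem:truncation}.
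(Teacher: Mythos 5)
Your proposal follows the same architecture as the paper's proof of the compactness/semicontinuity claim: the drift-freezing truncation you describe is exactly the paper's map $S^\tau(\omega,b)=(\hat\omega^\tau,\hat b^\tau)$ with $\hat b^\tau(t)=b(\tau\wedge t)$, it yields the same global bound $\mathbb{E}\big[\int_0^\infty|\tilde\beta_t|^p\,dt\big]\leq c^{-1}\tilde{\mathcal J}_L$, and the extraction and identification of the limit proceed as you indicate. The paper, like you, defers the stability of randomized stopping times under weak convergence to the literature (Meyer and Zheng rather than Baxter--Chacon/Tan--Touzi), and you are right that this is the crux; your remark about tightness of the $\tau$-marginal in the bounded-$U$ case also points at a genuine soft spot, since the paper's estimate $\int\tilde\gamma^i(\omega,b)([R,\infty))\,\tilde{\mathbb Q}^i\leq (cR)^{-1}\tilde{\mathcal J}_L$ presupposes $L\geq c>0$, which is only guaranteed under \eqref{eqn:L-assumption-1}.

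Two sub-arguments are genuinely different. For lower semicontinuity of the cost, the paper does not invoke the Tonelli/Ioffe theory of convex integral functionals: it mollifies the drift in time, replacing $\tilde\beta_t$ by $\varepsilon^{-1}(B_{s+\varepsilon}-B_s)$, controls the error via Jensen and the uniform continuity of $L$ in $(t,x)$ on the event $U_{\varepsilon,\delta,\tau}$, passes to the weak limit in the resulting integrand (which is continuous in $b$), and then sends $\varepsilon,\delta\to0$. Your route is legitimate, but the justification should be corrected: Portmanteau requires $\Phi$ to be lower semicontinuous on all of $\R^+\times\Omega\times\Omega$, so what you must invoke is superlinearity of $L$ in $u$ (respectively closedness and boundedness of $U$) together with convexity --- not the ``uniform $L^p$ bound on the $\dot b^i$,'' which is a property of your particular sequence, not of the function $\Phi$. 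For convexity, the paper gives no construction at all and simply cites Jacod--M\'emin; your explicit mixture via $\tilde A_t=\lambda\ell^0\tilde A^0_t+(1-\lambda)\ell^1\tilde A^1_t$ is the natural argument and correctly reproduces $\lambda(\tilde\alpha^0\ltimes\tilde{\mathbb P}^0)+(1-\lambda)(\tilde\alpha^1\ltimes\tilde{\mathbb P}^1)$, but the adaptedness defect you identify is real, and your fix (adjoining $\sigma(\ell^0,\ell^1)$ to the filtration) conflicts with the paper's definition of $\tilde{\mathcal A}(\mu)$, which fixes $\tilde{\mathbb F}$ to be the canonical filtration. To close this you need either the measure-level characterization of randomized stopping times from the cited reference, or to note explicitly that enlarging the filtration does not change the set of measures $\tilde\alpha\ltimes\tilde{\mathbb P}$ relevant to the rest of the paper (conditions (ii)--(iv) and the cost \eqref{eqn:cost} do not see the filtration); as written, the adaptedness of the mixed stopping time with respect to the canonical filtration is not established.
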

\begin{proof}
The convexity follows immediately from \cite[Corollary III.2.8]{Jacod}. Now,
%We have convexity by checking the five points in the definition of $\tilde{\mathcal{A}}(\mu)$. {\red The only point that requires checking is that given if we can define the combination of stopping times by disintigration}. 
for each $\tau\in \R^+$, let us define the map $S^\tau:\Omega\times \Omega\rightarrow\Omega\times \Omega$ by
\begin{align}\label{eqn:truncation_map}
    S^\tau(\omega,b):=(\hat{\omega}^{\tau},\hat{b}^{\tau}),
\end{align}
where
$$
    \hat{\omega}^{\tau}(t) := \omega(t)-b(t)+b(\tau\wedge t)
\,\,\,\,\mbox{and}\,\,\,\,
    \hat{b}^{\tau}(t) := b(\tau\wedge t).
$$
%(the dependence of $\hat{\omega}^{\tau}$ and $\hat{b}^\tau$ on $\tilde{\omega}$ is suppressed in the notation).
%Note that for $t\geq \tau$, we can also write $\hat{\omega}^{\tau}(t) = X_\tau+\tilde{W}_t-\tilde{W}_\tau$.
Notice that, for every $\tau \in \mathbb{R}^+$, the map $S^\tau$ is invariant for $\tilde{W}$ in the sense that
$$
    \tilde{W}_t\big(S^\tau(\omega,b)\big)=\hat{\omega}^{\tau}(t)-\hat{\omega}^{\tau}(0)-\hat{b}^{\tau}(t)=\omega(t)-\omega(0)-b(t)=\tilde{W}_t(\omega,b).
$$
We define the pair $(\tilde{\mathbb{Q}}^i,\tilde{\gamma}^i)$ by duality such that
$$
    \int_{\Omega}\int_\Omega\int_{\R^+} H(\tau,\omega,b) \tilde{\gamma}^i(\omega,b)(d\tau)\tilde{\mathbb{Q}}^i(d\omega,db)=\int_{{\Omega}}\int_\Omega\int_{\R^+} H(\tau,\hat{\omega}^\tau,\hat{b}^\tau) \tilde{\alpha}^i(\omega,b)(d\tau)\tilde{\mathbb{P}}^i(d\omega,db),
$$
for all $H \in C_b(\mathbb{R}^+ \times \Omega \times \Omega)$. In particular, the truncated measure $\tilde{\mathbb{Q}}^i$ satisfies
\begin{align*}
    \int_{{\Omega}}\int_\Omega F(\omega,b) \tilde{\mathbb{Q}}^i(d\omega,db)=\int_{{\Omega}}\int_\Omega\int_{\R^+} F(\hat{\omega}^\tau,\hat{b}^\tau) \tilde{\alpha}^i(\omega,b)(d\tau)\tilde{\mathbb{P}}^i(d\omega,db),\,\mbox{for all}\,F \in C_b(\Omega \times \Omega).
\end{align*}
Now, we will show that $(\tilde{\mathbb{Q}}^i,\tilde{\gamma}^i)\in \tilde{\mathcal{A}}(\mu,\nu)$ with the same cost. First, it is not difficult to check that the properties (i),\,(ii),\,(iii), (iv) and (v) in the definition of $\tilde{\mathcal{A}}(\mu)$ hold. In fact, for all $G \in C_b(\Omega)$, we have
\begin{align*}
    \int_{{\Omega}} G(\omega)\tilde{W}_\# \tilde{\mathbb{Q}}^i(d\omega)=&\ \int_{\tilde{\Omega}}G\big(\tilde{W}(\omega,b)\big)\tilde{\mathbb{Q}}^i(d\omega,db)\\
    =&\ \int_{{\Omega}}\int_\Omega\int_{\R^+} G\big(\tilde{W}(\hat{\omega}^\tau,\hat{b}^\tau)\big)\tilde{\alpha}^i(\omega,b)(d\tau)\tilde{\mathbb{P}}^i(d\omega,db)\\
    =&\ \int_{{\Omega}}\int_\Omega G\big(\tilde{W}(\omega,b)\big)\tilde{\mathbb{P}}^i(d\omega,db).
\end{align*}
Thus, $\tilde{W}_\# \tilde{\mathbb{Q}}^i=\tilde{W}_\# \tilde{\mathbb{P}}^i$. On the other hand,
%For $F$ adapted to $\tilde{\mathbb{F}}$,
we have
\begin{align*}
     \int_{\Omega}\int_\Omega\int_{\R^+}f(\omega(\tau))\tilde{\gamma}^i(\omega,b)(d\tau)\tilde{\mathbb{Q}}^i(d\omega,db)
    =&\ \int_{{\Omega}}\int_\Omega\int_{\R^+}f(\hat{\omega}^\tau(\tau)\big)\tilde{\alpha}^i(\omega,b)(d\tau)\tilde{\mathbb{P}}^i(d\omega,db)\\
    =&\  \int_{{\Omega}}\int_\Omega\int_{\R^+}f(\omega(\tau))\tilde{\alpha}^i(\omega,b)(d\tau)\tilde{\mathbb{P}}^i(d\omega,db)\\
    =& \int f \, \mathrm{d}\nu,\,\,\,\mbox{for all}\,\,f \in C_b(\mathbb{R}^d).
\end{align*}
%since $(\hat{\omega}^\tau,\hat{b}^\tau)$ lie in the same $\mathcal{F}_\tau$ equivalence class as $(\omega,b)$.
This implies that $X_\tau \sim_{(\tilde{\gamma}^i\ltimes\tilde{\mathbb{Q}}^i)_{T,X}} \nu$. Similarly, we see that the cost does not change since
\begin{align*}
    &\ \int_{\Omega}\int_\Omega\int_{\R^+}\int_0^\tau L(t,X_t(\omega),\tilde{\beta}_t(b))\mathrm{d}t\,\tilde{\gamma}^i(\omega,b)(d\tau)\tilde{\mathbb{Q}}^i(d\omega,db)\\
    =&\ \int_{{\Omega}}\int_\Omega\int_{\R^+}\int_0^\tau L(t,X_t(\hat{\omega}^\tau),\tilde{\beta}_t(\hat{b}^\tau))\mathrm{d}t\,\tilde{\alpha}^i(\omega,b)(d\tau)\tilde{\mathbb{P}}^i(d\omega,db)\\
    =&\  \int_{{\Omega}}\int_\Omega\int_{\R^+}\int_0^\tau L(t,X_t(\omega),\tilde{\beta}_t(b))\mathrm{d}t\,\tilde{\alpha}^i(\omega,b)(d\tau)\tilde{\mathbb{P}}^i(d\omega,db).
    %&=  \tilde{\mathcal{J}}_L(\tilde{\mathbb{P}}^i,\tilde{\alpha}^i).
    %=& \int f \mathrm{d}\nu,\,\,\,\mbox{for all}\,\,f \in C_b(\mathbb{R}^d).
\end{align*}
% \begin{align*}
%     \int_{\tilde{\Omega}}\int_{\R^+}f\big(\tilde{X}_\tau(\tilde{\omega})\big)\tilde{\gamma}^i(d\tau)(\tilde{\omega})\tilde{\mathbb{Q}}^i(d\tilde{\omega})=&\ \int_{\tilde{\Omega}}\int_{\R^+}f\big(\tilde{X}_\tau(\hat{\omega}^\tau)\big)\tilde{\alpha}^i(d\tau)(\tilde{\omega})\tilde{\mathbb{P}}^i(d\tilde{\omega})\\
%     =&\ \int_{\tilde{\Omega}}\int_{\R^+}f\big(\tilde{X}_\tau(\tilde{\omega})\big)\tilde{\alpha}^i(d\tau)(\tilde{\omega})\tilde{\mathbb{P}}^i(d\tilde{\omega}).
% \end{align*}
% The cost will similarly not change because
% \begin{align*}
%     &\ \int_{\tilde{\Omega}}\int_{\R^+} \int_0^\tau L\big(t,\tilde{X}_t(\tilde{\omega}),\tilde{\beta}_t(\tilde{\omega})\big)dt\, \tilde{\gamma}^i(d\tau)(\tilde{\omega})\tilde{\mathbb{Q}}^i(d\tilde{\omega})\\
%     =&\ \int_{\tilde{\Omega}}\int_{\R^+} \int_0^\tau L\big(t,\hat{\omega}^\tau(t),\tilde{\beta}_t(\hat{b}^\tau)\big)dt\, \tilde{\alpha}^i(d\tau)(\tilde{\omega})\tilde{\mathbb{P}}^i(d\tilde{\omega})\\
%     =&\ \int_{\tilde{\Omega}}\int_{\R^+} \int_0^\tau L\big(t,\tilde{X}_t(\tilde{\omega}),\tilde{\beta}_t(\tilde{\omega})\big)dt\, \tilde{\alpha}^i(d\tau)(\tilde{\omega})\tilde{\mathbb{P}}^i(d\tilde{\omega}).
% \end{align*}
\noindent The advantage of this truncation is that we now have that
\begin{align*}
\int_{{\Omega}}\int_\Omega\int_{\R^+} |\tilde{\beta}_t(b)|^pdt\, \tilde{\mathbb{Q}}^i(d\omega,db)=&\ \int_{{\Omega}}\int_\Omega\int_{\R^+}\int_{\R^+} |\tilde{\beta}_t(\hat{b}^\tau)|^pdt\, \tilde{\alpha}^i(\omega,b)(d\tau)\tilde{\mathbb{P}}^i(d\omega,db)\\
=&\  \int_{{\Omega}}\int_\Omega\int_{\R^+} \int_0^\tau |\tilde{\beta}_t(b)|^pdt\, \tilde{\alpha}^i(\omega,b)(d\tau)\tilde{\mathbb{P}}^i(d\omega,db),\\
\leq&\ c^{-1} \tilde{\mathcal{J}}_L(\tilde{\mathbb{P}}^i,\tilde{\alpha}^i),
\end{align*}
where the last inequality comes from the fact that the Lagrangian $L$ is superlinear, or in the case that $U$ is bounded this holds for any $p$ with $c=\inf_{(t,x,u)\in \R^+\times \R^d\times U} L(t,x,u)/|u|^p$. Furthermore, we have 
%the sequence $\tilde{\gamma}^i\ltimes \tilde{\mathbb{Q}}^i$ is also tight, since
  \begin{align*}
      &\ \int_\Omega\int_{{\Omega}}\tilde{\gamma}^i(\omega,b)([R,+\infty))\tilde{\mathbb{Q}}^i(d\omega,db)\\
      \leq&\ \frac{1}{c R}\int_{\tilde{\Omega}}\int_{R}^\infty \int_0^{\tau} L\big(t,X_t({\omega}),\tilde{\beta}_t(b)\big)dt\, \tilde{\gamma}^i(\omega,b)(d\tau)\tilde{\mathbb{Q}}^i(d\omega,db)\\
      \leq&\ \frac{1}{c R} \tilde{\mathcal{J}}_L(\tilde{\mathbb{Q}}^i,\tilde{\gamma}^i).%.https://www.overleaf.com/project/5cc0c2d5086a2d3f70742fb7
\end{align*}

\noindent  Then, by \cite{meyer1984tightness,zheng1985tightness}, we infer that the sequence
%$\{\tilde{\mathbb{Q}}^i\}$ is tight and so, up to a subsequence, $\tilde{\mathbb{Q}}^i \rightharpoonup  \tilde{\mathbb{P}}$. Furthermore, the sequence
$\tilde{\gamma}^i\ltimes \tilde{\mathbb{Q}}^i$ is tight
%, since
 % \begin{align*}
  %    &\ \int_\Omega\int_{{\Omega}}\tilde{\gamma}^i(\omega,b)([R,+\infty))\tilde{\mathbb{Q}}^i(d\omega,db)\\
 %     \leq&\ \frac{1}{c R}\int_{\tilde{\Omega}}\int_{R}^\infty \int_0^{\tau} L\big(t,X_t({\omega}),\tilde{\beta}_t(b)\big)dt\, \tilde{\gamma}^i(\omega,b)(d\tau)\tilde{\mathbb{Q}}^i(d\omega,db)\\
 %     \leq&\ \frac{1}{c R} \tilde{\mathcal{J}}_L(\tilde{\mathbb{Q}}^i,\tilde{\gamma}^i).%.https://www.overleaf.com/project/5cc0c2d5086a2d3f70742fb7
%\end{align*}
  %Moreover, it is easy to check thathttps://www.overleaf.com/project/5cc0c2d5086a2d3f70742fb7
  and so, there is some $(\tilde{\mathbb{P}},\tilde{\alpha}) \in \tilde{\mathcal{A}}(\mu)$ such that
  $(\tilde{\gamma}^{i_k}\ltimes \tilde{\mathbb{Q}}^{i_k})\rightharpoonup(\tilde{\alpha}\ltimes \tilde{\mathbb{P}})$. In particular, this implies that $(\tilde{\gamma}^{i_k}\ltimes \tilde{\mathbb{Q}}^{i_k})_{T,X}\rightharpoonup(\tilde{\alpha}\ltimes \tilde{\mathbb{P}})_{T,X}$ %where $(\tilde{\mathbb{P}},\tilde{\alpha}) \in \tilde{\mathcal{A}}(\mu)$
 and $X_\tau \sim_{(\tilde{\alpha}\ltimes\tilde{\mathbb{P}})_{T,X}}\nu$.

%Inspired by some estimates in \cite{mikami2002optimal}, we are able to prove lower semi-continuity for the functional $\tilde{\mathcal{J}}$.
\noindent \hspace{0.1cm} Now, we define
$$
U_{\varepsilon,\delta,\tau}(\omega)=\begin{cases} 1 & \mbox{if}\,\,\sup_{s \in \left[0,\tau-\varepsilon\right],|t-s|\leq \varepsilon} |X_s(w) - X_t(w)|< \delta,\\ 0 & \mbox{otherwise}. \end{cases}
$$
Using the fact that $ L$ is uniformly continuous in $(t,x)$ and by Jensen's inequality, it is not difficult to check that, when $U_{\varepsilon,\delta,\tau}(\omega)=1$, we have
 $$ \int_0^{\tau -  \varepsilon} L\Big(s,X_s(\omega),\frac{1}{\varepsilon}\int_s^{s + \varepsilon} \tilde{\beta}_t(b)\,dt\Big)ds \leq  \int_0^{\tau}  L\big(t,X_t(\omega),\tilde{\beta}_t(b)\big)\, dt + C(\varepsilon + \delta)\tau,$$
 where $C$ is the modulus of uniform continuity of the Lagrangian $L$ with respect to $t$ and $x$. Then, we get
 \begin{align*}
 \tilde{\mathcal{J}}_L(\tilde{\mathbb{Q}}^i,\tilde{\gamma}^i)%&= \mathbb{E}^{\tilde{\gamma}^i\ltimes \tilde{\mathbb{Q}}^i} \Big[\int_0^{\tau} L(t,X_t,\tilde{\beta}_t)dt\Big]\\
 \geq&\ \mathbb{E}^{\tilde{\gamma}^i\ltimes \tilde{\mathbb{Q}}^i} \Big[U_{\varepsilon,\delta,\tau}\int_0^{(\tau-\varepsilon)\vee 0} L\big(s,X_s,\frac{1}{\varepsilon}(B_{s + \varepsilon}(b) - B_s(b))\big)ds\Big] - C(\varepsilon + \delta).
 \end{align*}
The map $(\tau,w,b) \mapsto  U_{\varepsilon,\delta,\tau}(w)\int_0^{(\tau - \varepsilon)\vee 0} L(s,X_s(w),\frac{1}{\varepsilon}(B_{s + \varepsilon}(b) - B_s(b)))\mathrm{d}s$\, is lower semi-continuous and $\tilde{\gamma}^i\ltimes \tilde{\mathbb{Q}}^i \rightharpoonup  \tilde{\alpha}\ltimes \tilde{\mathbb{P}}$. So, passing to the limit when $i \to +\infty$, we get
 $$
     \liminf_{i \to \infty}\tilde{\mathcal{J}}_L(\tilde{\mathbb{Q}}^i,\tilde{\gamma}^i)\geq \mathbb{E}^{\tilde{\alpha}\ltimes \tilde{\mathbb{P}}} \Big[U_{\varepsilon,\delta,\tau}\int_0^{(\tau-\varepsilon)\vee 0} L\big(s,X_s,\frac{1}{\varepsilon}(B_{s + \varepsilon}(b) - B_s(b))\big)ds\Big]- C(\varepsilon +\delta).
     $$ 
  Letting $\varepsilon \to 0$ and using Fatou's Lemma as well as the continuity of $C$, we infer that
   $$\liminf_{i \to \infty}\tilde{\mathcal{J}}_L(\tilde{\mathbb{Q}}^i,\tilde{\gamma}^i) \geq \tilde{\mathcal{J}}_L(\tilde{\mathbb{P}},\tilde{\alpha}) - C(\delta),
   $$
   which completes the proof since $\delta>0$ is arbitrary and, thanks to the fact that $\tilde{\mathcal{J}}_L(\tilde{\mathbb{P}}^i,\tilde{\alpha}^i)=\tilde{\mathcal{J}}_L(\tilde{\mathbb{Q}}^i,\tilde{\gamma}^i)$, for every $i \in \mathbb{N}$. $\qedhere$
    \end{proof}
    As a consequence, we get
\begin{corollary}\label{Cor:ls.c. and convex}
The problem \eqref{eqn:primal_weak_stochastic} reaches a minimum. Moreover, the map $\nu \mapsto {\mathcal{P}}_L(\mu,\nu)$ is convex and lower semi-continuous.
\end{corollary}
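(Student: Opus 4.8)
The plan is to read the whole statement off from Propositions~\ref{prop:stochastic_relaxation} and~\ref{prop:convex_compact}, working through the convex stochastic formulation, where both the convexity we need and the compactness we need are transparent. By Proposition~\ref{prop:stochastic_relaxation}(3) we have $\mathcal{P}_L(\mu,\nu)=\tilde{\mathcal{P}}_L(\mu,\nu)$ for every target $\nu$, so it suffices to prove attainment, convexity and lower semicontinuity for $\nu\mapsto\tilde{\mathcal{P}}_L(\mu,\nu)$ and then transfer the conclusions back to \eqref{eqn:primal_weak_stochastic}. Whenever the value under consideration is $+\infty$ (in particular when $\tilde{\mathcal{A}}(\mu,\nu)=\emptyset$) the corresponding assertion is vacuous or trivial, so we may restrict attention to finite values.

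For lower semicontinuity, I would take $\nu^i\rightharpoonup\nu$ and, after extracting a subsequence, assume $\mathcal{P}_L(\mu,\nu^i)\to\liminf_i\mathcal{P}_L(\mu,\nu^i)<+\infty$. Choosing $(\tilde{\mathbb{P}}^i,\tilde{\alpha}^i)\in\tilde{\mathcal{A}}(\mu,\nu^i)$ with $\tilde{\mathcal{J}}_L(\tilde{\mathbb{P}}^i,\tilde{\alpha}^i)\le\mathcal{P}_L(\mu,\nu^i)+1/i$, Proposition~\ref{prop:convex_compact} supplies $(\tilde{\mathbb{P}},\tilde{\alpha})\in\tilde{\mathcal{A}}(\mu,\nu)$ with $\tilde{\mathcal{J}}_L(\tilde{\mathbb{P}},\tilde{\alpha})\le\liminf_i\tilde{\mathcal{J}}_L(\tilde{\mathbb{P}}^i,\tilde{\alpha}^i)=\liminf_i\mathcal{P}_L(\mu,\nu^i)$, whence $\mathcal{P}_L(\mu,\nu)=\tilde{\mathcal{P}}_L(\mu,\nu)\le\liminf_i\mathcal{P}_L(\mu,\nu^i)$. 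Attainment is then the special case $\nu^i\equiv\nu$ applied to a minimizing sequence: Proposition~\ref{prop:convex_compact} produces a minimizer $(\tilde{\mathbb{P}},\tilde{\alpha})$ of $\tilde{\mathcal{P}}_L(\mu,\nu)$, and feeding it into Proposition~\ref{prop:stochastic_relaxation}(2) gives $(\tilde{\mathbb{P}}_X,\beta,\alpha)\in\mathcal{A}(\mu)$ with the same $(T,X)$-marginal — so $X_\tau\sim\nu$ still holds, i.e. $(\tilde{\mathbb{P}}_X,\beta,\alpha)\in\mathcal{A}(\mu,\nu)$ — and $\mathcal{J}_L(\tilde{\mathbb{P}}_X,\beta,\alpha)\le\tilde{\mathcal{J}}_L(\tilde{\mathbb{P}},\tilde{\alpha})=\mathcal{P}_L(\mu,\nu)$, so it solves \eqref{eqn:primal_weak_stochastic}.

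For convexity, the observation to exploit is that, in the convex formulation, both the cost and the end constraint are affine in the measure $\tilde{\alpha}\ltimes\tilde{\mathbb{P}}$ on $\R^+\times\Omega\times\Omega$: the cost $\tilde{\mathcal{J}}_L(\tilde{\mathbb{P}},\tilde{\alpha})$ is the integral of the (measurable) functional $(\tau,\omega,b)\mapsto\int_0^\tau L(t,\omega(t),b'(t))\,dt$ against $\tilde{\alpha}\ltimes\tilde{\mathbb{P}}$, and the constraint $X_\tau\sim\nu$ reads $e_\#(\tilde{\alpha}\ltimes\tilde{\mathbb{P}})_{T,X}=\nu$ with $e(\tau,\omega)=\omega(\tau)$. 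Since $\{\tilde{\alpha}\ltimes\tilde{\mathbb{P}}:(\tilde{\mathbb{P}},\tilde{\alpha})\in\tilde{\mathcal{A}}(\mu)\}$ is convex by Proposition~\ref{prop:convex_compact}, given $(\tilde{\mathbb{P}}^j,\tilde{\alpha}^j)\in\tilde{\mathcal{A}}(\mu,\nu_j)$ for $j=0,1$ and $\lambda\in[0,1]$, the convex combination $(1-\lambda)(\tilde{\alpha}^0\ltimes\tilde{\mathbb{P}}^0)+\lambda(\tilde{\alpha}^1\ltimes\tilde{\mathbb{P}}^1)$ equals $\tilde{\alpha}\ltimes\tilde{\mathbb{P}}$ for some $(\tilde{\mathbb{P}},\tilde{\alpha})\in\tilde{\mathcal{A}}(\mu)$; by the affine dependence just recorded, its end distribution is $(1-\lambda)\nu_0+\lambda\nu_1$ and its cost is $(1-\lambda)\tilde{\mathcal{J}}_L(\tilde{\mathbb{P}}^0,\tilde{\alpha}^0)+\lambda\tilde{\mathcal{J}}_L(\tilde{\mathbb{P}}^1,\tilde{\alpha}^1)$. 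Passing to the infimum over such choices yields $\tilde{\mathcal{P}}_L(\mu,(1-\lambda)\nu_0+\lambda\nu_1)\le(1-\lambda)\tilde{\mathcal{P}}_L(\mu,\nu_0)+\lambda\tilde{\mathcal{P}}_L(\mu,\nu_1)$, hence the same inequality for $\mathcal{P}_L$. I do not expect a genuinely hard step here: the analytic content sits entirely in Propositions~\ref{prop:stochastic_relaxation} and~\ref{prop:convex_compact}, and the main thing to be careful about is the bookkeeping of the $(T,X)$-marginals together with the verification — which is precisely the convexity assertion of Proposition~\ref{prop:convex_compact} (via \cite{Jacod}) — that a convex combination of admissible measures is again of the form $\tilde{\alpha}\ltimes\tilde{\mathbb{P}}$ with $(\tilde{\mathbb{P}},\tilde{\alpha})\in\tilde{\mathcal{A}}(\mu)$.
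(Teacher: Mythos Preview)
Your proposal is correct and follows essentially the same route as the paper's proof: both derive attainment and lower semicontinuity by lifting to the convex stochastic formulation via Proposition~\ref{prop:stochastic_relaxation}, invoking Proposition~\ref{prop:convex_compact} to extract a limit with no larger cost, and projecting back; the paper merely sketches the convexity and l.s.c.\ as ``similar using Propositions~\ref{prop:stochastic_relaxation} \& \ref{prop:convex_compact},'' whereas you spell out the affine dependence of cost and end marginal on $\tilde{\alpha}\ltimes\tilde{\mathbb{P}}$ explicitly.
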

\begin{proof}
Let $({\mathbb{P}}^i,{\alpha}^i)$ be a minimizing sequence in Problem \eqref{eqn:primal_weak_stochastic} and $(\tilde{\mathbb{P}}^i,\tilde{\alpha}^i)$ be the corresponding measures such that $\tilde{\mathcal{J}}_L(\tilde{\mathbb{P}}^i,\tilde{\alpha}^i)={\mathcal{J}}_L({\mathbb{P}}^i,{\alpha}^i)$ (see Proposition \ref{prop:stochastic_relaxation}). By Proposition \ref{prop:convex_compact}, there is a $(\tilde{\mathbb{P}},\tilde{\alpha})$ such that $\liminf_{i \to \infty}\tilde{\mathcal{J}}_L(\tilde{\mathbb{P}}^i,\tilde{\alpha}^i) \geq \tilde{\mathcal{J}}_L(\tilde{\mathbb{P}},\tilde{\alpha})$.
%which implies directly that $(\tilde{\mathbb{P}},\tilde{\alpha})$ is optimal for (\ref{eqn:primal_convex_stochastic}). 
This yields that the projection $(\mathbb{P},\beta,\alpha)$ of $(\tilde{\mathbb{P}},\tilde{\alpha})$ (see again Proposition \ref{prop:stochastic_relaxation}) is a minimizer for (\ref{eqn:primal_weak_stochastic}). The second statement can be proved in a similar way using Propositions \ref{prop:stochastic_relaxation} \& \ref{prop:convex_compact}. $\qedhere$
\end{proof}
\section{Duality and dynamic programming} \label{sec:duality}
In this section, we verify a duality principle for the stochastic transportation problem \eqref{eqn:primal_weak_stochastic}. More precisely, we consider the following maximization principle:
\begin{equation} \label{eqn:dual_stochastic}
    %\mathcal{P}_L(\mu,\nu) =
    \mathcal{D}_L(\mu,\nu):=\sup_{(\phi,\psi)\in \mathcal{Z}_L}\Big\{\int_{\R^d}\psi(y)\nu(dy)-\int_{\R^d}\phi(x)\mu(dx)\Big\},
\end{equation}
where $\mathcal{Z}_L$ is the set of functions $(\phi,\psi)$ on $\R^d$ with $\psi\in C_b(\R^d)$, continuous and bounded, and $\phi\in B_\mu(\R^d)$, measurable w.r.t. the Borel $\sigma-$field on $\mathbb{R}^d$ completed by $\mu$, such that the following holds:
\begin{align*}
    \phi(x)\geq \mathbb{E}^{\alpha\ltimes\mathbb{P}}\Big[\psi(X_\tau)-\int_0^\tau L(t,X_t,\beta_t)dt\Big],
\end{align*}
for all $(\mathbb{P},\beta,\alpha)\in \mathcal{A}(\delta_x)$, $\mu-$ a.e. $x \in \mathbb{R}^d$.

We can characterize the dual problem further using the dynamic programming principle. We first define a translation map $T^{t,x}:\Omega\rightarrow C([t,+\infty),\R^d)$, where
$$
    T^{t,x}(\omega)(s)=\omega(s-t)+x.
$$
Given $\psi\in C_b(\R^d)$, we introduce the function 
$$
    J_\psi(t,x):=\sup_{(\mathbb{P},\beta,\alpha)\in \mathcal{A}_t(\delta_x)} \mathbb{E}^{\alpha\ltimes \mathbb{P}}\Big[\psi(X_{\tau})-\int_t^{\tau} L(s,X_{s},\beta_{s})ds\Big]
$$
where
$$\mathcal{A}_t(\delta_x):=\{T^{t,0}_\#(\mathbb{P},\beta,\alpha);(\mathbb{P},\beta,\alpha) \in \mathcal{A}(\delta_x)\}.$$
Here, the pushforward on $\mathbb{P}$ is as a measure and on $\beta$ and $\alpha$ are given respectively by $\beta({T^{t,0}}^{-1}(\omega))$ and $\alpha({T^{t,0}}^{-1}(\omega))$ for all $\omega\in C([t,+\infty),\R^d)$. Moreover, we note that
$$
    J_\psi(t,x):=\sup_{(\tilde{\mathbb{P}},\tilde{\alpha})\in \tilde{\mathcal{A}}_t(\delta_x)} \mathbb{E}^{\tilde{\alpha}\ltimes \tilde{\mathbb{P}}}\Big[\psi(X_{\tau})-\int_t^{\tau} L(s,X_{s},\tilde{\beta}_{s})ds\Big]
$$
where 
$$\tilde{\mathcal{A}}_t(\delta_x):=\{\tilde{T}^{t,0}_\#(\tilde{\mathbb{P}},\tilde{\alpha});(\tilde{\mathbb{P}},\tilde{\alpha}) \in \tilde{\mathcal{A}}(\delta_x)\}$$
with
$$\tilde{T}^{t,0}(w,b)(s)=(w(s-t),b(s-t))$$
and $$\tilde{T}^{t,0}_\#\tilde{\alpha}:=\tilde{\alpha}({\tilde{T}^{t,0}}^{-1}(\omega,b))\,\,\,\mbox{for all}\,\,\,(\omega,b)\in C([t,+\infty),\R^{d} \times \mathbb{R}^d).$$
%\marginpar{\red This definition for $\mathcal{A}_t$ doesn't work - AP. \\\\
%If you mean that it doesn't work since the decomposition is not satisfied, the decomposition here is up to $t$.}
For any $(\phi,\psi) \in \mathcal{Z}_L$, we have that $J_\psi(0,x)\leq \phi(x)$ for $\mu-$a.e. $x\in \R^d$.
So, the dual problem (\ref{eqn:dual_stochastic}) becomes
$$
D_L(\mu,\nu)=\sup_{\psi\in C_b(\R^d)}\Big\{\int_{\R^d} \psi(y)\,\nu(dy) - \int_{\R^d} J_\psi(0,x)\, \mu(dx)\Big\}.
$$
We first prove a Lemma that verifies a basic level of regularity for $J_\psi$ needed in the proof of Theorem \ref{thm:stochastic_duality}.  Such a result is standard, see \cite{tan2013optimal} for a setting similar to ours.

\begin{lemma}\label{lem:measurability}
    We suppose that $\psi \in C_b(\mathbb{R}^d)$.
    %is Borel measureable. %lower semi-continuous and bounded below. 
    Then, the map
    %for each $t\in \R^+$,
    $(t,x)\mapsto J_\psi(t,x)$ is lower semi-continuous and bounded from below. In addition,
    %lower semi-continuous.
   % for any probability measure $\mu$,
    %if $\int_{\R^d}\psi(x)\mu(dx)>-\infty$ then
    we have
    \begin{align}\label{eqn:equality}
        \int_{\R^d}J_\psi(0,x)\mu(dx) = \sup_{(\mathbb{P},\beta,\alpha)\in \mathcal{A}(\mu)}\Big\{\mathbb{E}^{\alpha\ltimes\mathbb{P}} \Big[\psi(X_\tau)   -\int_0^\tau L(t,X_t,\beta_t)dt\Big]
  \Big\}.
    \end{align}
\end{lemma}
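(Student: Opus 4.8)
The plan is to prove the two assertions separately. For lower semicontinuity and boundedness below of $(t,x)\mapsto J_\psi(t,x)$: boundedness below is immediate since the strategy ``stop immediately'' (i.e.\ take $\alpha=\delta_0$ in $\mathcal{A}_t(\delta_x)$) is admissible and yields value $\psi(x)\geq \inf \psi>-\infty$, so $J_\psi(t,x)\geq \inf_{\mathbb{R}^d}\psi$. For lower semicontinuity, I would fix $(t,x)$ and a nearly optimal $(\tilde{\mathbb{P}},\tilde{\alpha})\in\tilde{\mathcal{A}}_t(\delta_x)$ (using the convex formulation, which enjoys the compactness of Proposition \ref{prop:convex_compact} and whose value equals $J_\psi$), and for $(t',x')$ near $(t,x)$ transport this strategy by the translation map $\tilde T^{t'-t,x'-x}$, i.e.\ start the same controlled path from $x'$ at time $t'$. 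Since $\tilde W$ is unchanged under translation and the Lagrangian is uniformly continuous in $(t,x)$ uniformly in $u$, the cost along the shifted strategy differs from the original by at most $C(|t-t'|+|x-x'|)\mathbb{E}[\tau]$ plus an error in the $\psi(X_\tau)$ term controlled by the modulus of continuity of $\psi$; a truncation (Lemma \ref{lem:truncation}) first reduces to the case $\mathbb{E}[\tau]$ bounded, so these errors vanish as $(t',x')\to(t,x)$. This gives $\liminf_{(t',x')\to(t,x)} J_\psi(t',x')\geq J_\psi(t,x)-\varepsilon$, hence lower semicontinuity.

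For the identity \eqref{eqn:equality}, the inequality ``$\leq$'' is the easy direction: given $(\mathbb{P},\beta,\alpha)\in\mathcal{A}(\mu)$, disintegrate $\mathbb{P}$ with respect to $X_0$ as $\mathbb{P}=\int \mathbb{P}_x\,\mu(dx)$, so that $(\mathbb{P}_x,\beta,\alpha)$ (after translating time to start at $0$) is an element of $\mathcal{A}(\delta_x)$ for $\mu$-a.e.\ $x$; by definition of $J_\psi(0,x)$ the conditional expectation of $\psi(X_\tau)-\int_0^\tau L\,dt$ given $X_0=x$ is at most $J_\psi(0,x)$, and integrating against $\mu$ gives $\mathbb{E}^{\alpha\ltimes\mathbb{P}}[\psi(X_\tau)-\int_0^\tau L\,dt]\leq \int J_\psi(0,x)\,\mu(dx)$; taking the supremum over $(\mathbb{P},\beta,\alpha)$ yields ``$\leq$''. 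The reverse inequality ``$\geq$'' is the substantive part: I need a measurable selection of near-optimizers, i.e.\ for each $x$ an element $(\mathbb{P}_x,\beta^x,\alpha^x)\in\mathcal{A}(\delta_x)$ whose value exceeds $J_\psi(0,x)-\varepsilon$, chosen to depend measurably on $x$, and then glue them: set $\mathbb{P}=\int \mathbb{P}_x\,\mu(dx)$ with the corresponding glued drift and randomized stopping time, obtaining an admissible triple in $\mathcal{A}(\mu)$ whose cost is at least $\int J_\psi(0,x)\,\mu(dx)-\varepsilon$.

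The main obstacle is this measurable selection and gluing step. I would handle it as in \cite{tan2013optimal}: the lower semicontinuity of $J_\psi$ just established, together with the fact that the set-valued map $x\mapsto \mathcal{A}(\delta_x)$ (equivalently $\tilde{\mathcal{A}}(\delta_x)$, realized as measures on $\R^+\times\tilde\Omega$) has closed graph and compact values in the topology of Proposition \ref{prop:convex_compact} — after the truncation of Lemma \ref{lem:truncation} to restrict to a fixed compact time–space window, on which $J_\psi$ is bounded and the value functional is upper semicontinuous — puts us in the setting of the Jankov--von Neumann / Kuratowski--Ryll-Nardzewski measurable selection theorem, giving a universally measurable (hence $\mu$-measurable, which is why $\phi$ is only required to lie in $B_\mu(\R^d)$) map $x\mapsto(\tilde{\mathbb{P}}_x,\tilde{\alpha}_x)$ with $\tilde{\mathcal{J}}_L\geq J_\psi(0,x)-\varepsilon$. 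The measurable family $\{\tilde{\mathbb{P}}_x\}$ together with the kernel structure of the randomized stopping times assembles, via $x\mapsto X_0$ being the identity on $\delta_x$, into a single $(\tilde{\mathbb{P}},\tilde{\alpha})\in\tilde{\mathcal{A}}(\mu)$; projecting back by Proposition \ref{prop:stochastic_relaxation} yields the desired $(\mathbb{P},\beta,\alpha)\in\mathcal{A}(\mu)$, and sending $\varepsilon\to0$ completes the proof.
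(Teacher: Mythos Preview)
Your proposal is correct and follows essentially the same route as the paper. The only presentational difference worth noting is in the lower semicontinuity step: rather than transporting a near-optimizer from $(t,x)$ to $(t',x')$ and invoking a truncation to control $\mathbb{E}[\tau]$, the paper rewrites $J_\psi(t,x)=\sup_{(\mathbb{P},\beta,\alpha)\in\mathcal{A}(\delta_0)}\mathbb{E}^{\alpha\ltimes\mathbb{P}}\big[\psi(X_\tau+x)-\int_0^\tau L(s+t,X_s+x,\beta_s)\,ds\big]$ as a supremum over the \emph{fixed} index set $\mathcal{A}(\delta_0)$ of functions each continuous in $(t,x)$, which yields l.s.c.\ directly without the $\varepsilon$-optimizer bookkeeping. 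For the identity \eqref{eqn:equality}, both proofs use disintegration for one inequality and Kuratowski--Ryll-Nardzewski measurable selection (with closedness of the $\varepsilon$-superlevel sets coming from Propositions~\ref{prop:stochastic_relaxation}--\ref{prop:convex_compact}) for the other, then glue and send $\varepsilon\to 0$.
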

\begin{proof}
    We first note that we can express $J_\psi(t,x)$ as
    \begin{align*}
        J_\psi(t,x)=&\ \sup_{(\mathbb{P},\beta,\alpha)\in \mathcal{A}(\delta_0)}\Big\{\mathbb{E}^{T^{t,x}_\# (\alpha \ltimes \mathbb{P})}\Big[\psi(X_\tau)-\int_t^\tau L(s,X_s,T^{t,x}_\#\beta_s)ds\Big]\Big\}\\
        =&\ \sup_{(\mathbb{P},\beta,\alpha)\in \mathcal{A}(\delta_0)}\Big\{\mathbb{E}^{\alpha \ltimes \mathbb{P}}\Big[\psi(X_\tau+x)-\int_0^\tau L(s+t,X_s+x,\beta_s)ds\Big]\Big\}.
    \end{align*}
    For each $(\mathbb{P},\beta,\alpha)\in \mathcal{A}(\delta_0)$, this defines a continuous function of $t$ and $x$ by the continuity of the translation maps $T^{t,x}$, the uniform continuity of $L$ and the bound on $\psi$, and so $J_\psi(t,x)$ is the supremum over all these functions, making it lower semi-continuous. Clearly, $J_\psi(t,x)$ is bounded below by $\psi(x)$ (we also note that $J_\psi(t,x)$ is bounded above by $\sup_x\psi$).
%     We consider the set
%     $$
%      \tilde{\mathcal{A}}=\big\{(x,\tilde{\alpha}\ltimes \tilde{\mathbb{P})} \in \mathbb{R}^d \times \mathcal{M}(\R^+\times \Omega \times \Omega)\,;\, (\tilde{\mathbb{P}},\tilde{\alpha}) \in \mathcal{A}(\delta_x) \big\}.
%     $$
%     It is clear that the set $\tilde{\mathcal{A}}$ is closed in the Borel product topology on $\R^d\times \mathcal{M}(\R^+\times {\Omega} \times \Omega)$ (with the weak* topology on the set of measures).  Furthermore, the map
%     $$
%  R(x,\tilde{\alpha}\ltimes \tilde{\mathbb{P}}):= \mathbb{E}^{\tilde{\alpha}\ltimes\tilde{\mathbb{P}}} \Big[\psi(X_\tau)   -\int_t^\tau L(t,X_t,\tilde{\beta}_t)dt\Big]
%     $$
%     is upper semi-continuous on $\tilde{\mathcal{A}}$ by Propositions \ref{prop:stochastic_relaxation} \& \ref{prop:convex_compact}, and hence Borel measurable.  It then follows that $x \mapsto J_\psi(0,x)$ is $\mu-$Borel measurable. %as the supremum over jointly measurable functions.
   
    To prove (\ref{eqn:equality}), we first note that any $({\mathbb{P}},\beta ,{\alpha})\in {\mathcal{A}}(\mu)$ disintegrates w.r.t.\ $\mu$ by the map $S(\tau,\omega)=\omega(0)$ and yields, using the definition of $J_\psi$, that
    $$
        \int_{\R^d}J_\psi(0,x)\mu(dx) \geq \mathbb{E}^{{\alpha}\ltimes{\mathbb{P}}} \Big[\psi(X_\tau)   -\int_0^\tau L(t,X_t,{\beta}_t)dt\Big] .
    $$
    Furthermore, for each $\varepsilon>0$, the set
    $$
        \Big\{(x,{\alpha}\ltimes {\mathbb{P}});\ (\mathbb{P},\beta,\alpha)\in \mathcal{A}(\delta_x),\  \mathbb{E}^{\alpha \ltimes \mathbb{P}}\Big[\psi(X_\tau)-\int_0^\tau L(s,X_s,\beta_s)ds\Big]\geq J_\psi(0,x)-\varepsilon\Big\}
    $$
    is non-empty for $\mu-$a.e.\ $x$ %(since $\int_{\R^d}J_\psi(0,x)\mu(dx)\geq \int_{\R^d} \psi(x)\mu(dx)>-\infty$ and jointly Borel-measureable,
    and
    %, for each $x$, the slice 
    %$$
 %  \Big\{\tilde{\alpha}\ltimes \tilde{\mathbb{P}}\in \tilde{\mathcal{A}}(\delta_x); R(x,\tilde{\alpha}\ltimes \tilde{\mathbb{P}})\geq J_\psi(0,x)-\epsilon\Big\}
  %  $$ 
    closed by Propositions \ref{prop:stochastic_relaxation} \&
    \ref{prop:convex_compact} and thanks to the lower semi-continuity of $J_\psi$. Thus, by the Kuratowski and Ryll-Nardzewski selection theorem, we may find a $\mu-$Borel measurable map ${{Q}}(x)={\alpha}^x\ltimes {\mathbb{P}}^x$ with $(\mathbb{P}^x,\beta^x,\alpha^x)\in \mathcal{A}(\delta_x)$ and $\  \mathbb{E}^{\alpha^x \ltimes \mathbb{P}^x}\Big[\psi(X_\tau)-\int_0^\tau L(s,X_s,\beta^x_s)ds\Big]\geq J_\psi(0,x)-\varepsilon$, for $\mu-$a.e.\ $x$. Therefore, we may consider $({\mathbb{P}},\beta,{\alpha})\in {\mathcal{A}}(\mu)$ such that ${\alpha}\ltimes {\mathbb{P}}$ disintegrates w.r.t.\ $\mu$ as ${Q}(x)$, i.e.\ ${\alpha} \ltimes {\mathbb{P}} = {Q}\ltimes \mu$. Thus, we have
    $$
 \int_{\R^d} J_\psi(0,x)\,\mu(dx) -\varepsilon \leq \mathbb{E}^{{\alpha}\ltimes{\mathbb{P}}} \Big[\psi(X_\tau)   -\int_0^\tau L(t,X_t,{\beta}_t)dt\Big],
    $$\\
    which proves the equality (\ref{eqn:equality}) when $\varepsilon \to 0$. $\qedhere$
\end{proof}

%where
%$$D_L(g^+,g^-)=\bigg\{ (\phi^+,\phi^-)\,:\,g^- \leq \phi^-,\,\phi^+ \leq g^+\, \,\mbox{on}\,\,\,\mathbb{R}^d \backslash\Omega\,\,\,\,\mbox{and}\,\,\,\phi^+ \oplus \phi^- \leq L\bigg\},$$
%while
%$$\phi^+ \oplus \phi^- \leq L \,\,\Leftrightarrow\,\,  \mathbb{E}^{\mathbb{P}}\left[\phi^-(X_\tau) - \phi^+(X_0)\right] \leq \mathbb{E}^{\mathbb{P}} \left[\int_0^\tau L(t,X_t,\beta_t^\mathbb{P})\mathrm{d}t\right], \,\,\forall\,\,\mathbb{P} \in \mathcal{P}(\delta_x),\,x \in \mathbb{R}^d.$$\\
%$D_L(g^+,g^-)=\bigg\{ (\phi^+,\phi^-):\,g^- \leq \phi^- \leq \phi^+ \leq g^+\,\,\,\mbox{on}\,\,\,\mathbb{R}^d \backslash\Omega,$ $$\qquad\qquad\,\,\,\,\,\,\,\,\,\,\,\,\qquad\qquad\mathbb{E}^{\mathbb{P}}\left[\phi^-(X_1) - \phi^+(X_0)\right] \leq \mathbb{E}^{\mathbb{P}} \left[\int_0^1 L(t,X_t,\beta_t^\mathbb{P})\mathrm{d}t\right], \,\forall\,\,\mathbb{P} \in \mathcal{M}(\delta_x)\bigg\}.$$\\
% In order to prove this duality \eqref{dual stochastic}, let us introduce the optimal value functions $V,\,D: (\mu^+,\mu^-) \mapsto V(\mu^+,\mu^-)$ which are defined as follows:
% $$V(\mu^+,\mu^-)=\min\bigg\{\mathbb{E}^{\mathbb{P}} \left[\int_0^\tau L(t,X_t,\beta_t^\mathbb{P})\,\mathrm{d}t\right]  \,:\,\mathbb{P} \in \mathcal{P}(\mu^+,\mu^-)\bigg\},$$
% and
% $$D(\mu^+,\mu^-)=$$
% $$\sup\bigg\{\int_{\mathbb{R}^d} \phi^- \mathrm{d}\mu^- - \int_{\mathbb{R}^d} \phi^+ \mathrm{d}\mu^+ :\mathbb{E}^{\mathbb{P}}\left[\phi^-(X_\tau) - \phi^+(X_0)\right] \leq \mathbb{E}^{\mathbb{P}} \left[\int_0^\tau L(t,X_t,\beta_t^\mathbb{P})\mathrm{d}t\right],\,\forall\,\mathbb{P} \in \mathcal{P}(\delta_x)\bigg\}.$$\\

 The scheme to prove duality is by now standard in convex analysis (we refer, for instance, to \cite{ekeland1999convex}). The idea is that if $\nu\mapsto V(\nu):=\mathcal{P}_L(\mu,\nu)$ is l.s.c.\ and convex, then $V^{\star\star}=V$, where $V^\star$ denotes the Legendre-Fenchel transform of $V$. The value $V(\nu)$ is the minimal value of our stochastic primal problem (\ref{eqn:primal_weak_stochastic}), while $V^{\star\star}(\nu)$ is identified with the maximal value of the dual problem (\ref{eqn:dual_stochastic}). This strategy was already used by many authors to establish dual principles for various optimal transport problems that do not fit in the Monge-Kantorovich theory (see \cite{mikami2008optimal,tan2013optimal}).

\begin{theorem} \label{thm:stochastic_duality}
%Under the above assumptions on the Lagrangian $L$,
The following equality holds:
$$\mathcal{P}_L(\mu,\nu)=D_L(\mu,\nu).$$
%$$\sup\bigg\{\int_{\mathbb{R}^d} \phi^- \mathrm{d}\mu^- - \int_{\mathbb{R}^d} \phi^+ \mathrm{d}\mu^+ :\mathbb{E}^{\mathbb{P}}\left[\phi^-(X_\tau) - \phi^+(X_0)\right] \leq \mathbb{E}^{\mathbb{P}} \left[\int_0^\tau L(t,X_t,\beta_t^\mathbb{P})\mathrm{d}t\right],\,\forall\,\mathbb{P} \in \mathcal{P}(\delta_x)\bigg\}.$$
%= \sup\bigg\{\int_{\mathbb{R}^d} \phi^- \mathrm{d}\mu^- - \int_{\mathbb{R}^d} \phi^+ \mathrm{d}\mu^+ \,:\,\phi^+ \oplus \phi^- \leq L\bigg\}.$$
\end{theorem}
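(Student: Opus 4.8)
The plan is to follow the standard Fenchel–Rockafellar route that the authors have already signposted just before the statement. Define $V(\nu):=\mathcal{P}_L(\mu,\nu)$ as a function on the space of finite signed measures on $\mathbb{R}^d$ (extended by $+\infty$ outside the probability measures, or outside the set where $\mathcal{A}(\mu,\nu)\neq\emptyset$). By Corollary~\ref{Cor:ls.c. and convex}, $V$ is convex and lower semi-continuous with respect to the weak topology. The natural duality pairing is between $C_b(\mathbb{R}^d)$ and finite measures, so I would compute the Legendre–Fenchel transform
\[
    V^\star(\psi)=\sup_{\nu}\Big\{\int_{\mathbb{R}^d}\psi\,d\nu-\mathcal{P}_L(\mu,\nu)\Big\},
\]
and then show $V^{\star\star}=V$, which by the biconjugation theorem (Fenchel–Moreau) holds precisely because $V$ is proper, convex and l.s.c. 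Evaluating $V^{\star\star}(\nu)=\sup_\psi\{\int\psi\,d\nu-V^\star(\psi)\}$ and matching it with $D_L(\mu,\nu)$ is then the remaining bookkeeping.

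The key computational step is to identify $V^\star(\psi)$ with $\int_{\mathbb{R}^d}J_\psi(0,x)\,\mu(dx)$. Unwinding the definitions,
\[
    V^\star(\psi)=\sup_{\nu}\sup_{(\mathbb{P},\beta,\alpha)\in\mathcal{A}(\mu,\nu)}\Big\{\int\psi\,d\nu-\mathbb{E}^{\alpha\ltimes\mathbb{P}}\Big[\int_0^\tau L(t,X_t,\beta_t)\,dt\Big]\Big\},
\]
and since $\nu={X_\tau}_\#(\alpha\ltimes\mathbb{P})$ ranges over all attainable end-distributions as $(\mathbb{P},\beta,\alpha)$ ranges over $\mathcal{A}(\mu)$ with $\mathcal{A}(\mu,\nu)\neq\emptyset$, the two suprema collapse into one over $\mathcal{A}(\mu)$, giving
\[
    V^\star(\psi)=\sup_{(\mathbb{P},\beta,\alpha)\in\mathcal{A}(\mu)}\mathbb{E}^{\alpha\ltimes\mathbb{P}}\Big[\psi(X_\tau)-\int_0^\tau L(t,X_t,\beta_t)\,dt\Big].
\]
This is exactly the right-hand side of \eqref{eqn:equality} in Lemma~\ref{lem:measurability}, so $V^\star(\psi)=\int_{\mathbb{R}^d}J_\psi(0,x)\,\mu(dx)$. (One should first restrict to $\psi\in C_b$; for general $\psi$ not in $C_b$ one checks $V^\star$ can only be worse, so the sup in $V^{\star\star}$ is not diminished by restricting to $C_b$—this uses that $V$ is already determined on weak-$*$ convergent sequences.) Once this identification is made, $V^{\star\star}(\nu)=\sup_{\psi\in C_b}\{\int\psi\,d\nu-\int J_\psi(0,x)\,\mu(dx)\}=D_L(\mu,\nu)$ by the reformulation of the dual problem given just before the theorem, using that $J_\psi(0,\cdot)\le\phi$ $\mu$-a.e. for any $(\phi,\psi)\in\mathcal{Z}_L$ and that $(J_\psi(0,\cdot),\psi)$ is itself (essentially) admissible.

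I would then invoke Fenchel–Moreau: since $V=\mathcal{P}_L(\mu,\cdot)$ is convex, l.s.c.\ (Corollary~\ref{Cor:ls.c. and convex}) and proper (it is finite for at least one $\nu$, e.g.\ by the attainment results referenced for Section~\ref{sec:attainment-large-p}, or trivially when $\mu=\nu$ with $\tau\equiv0$), we get $\mathcal{P}_L(\mu,\nu)=V(\nu)=V^{\star\star}(\nu)=D_L(\mu,\nu)$. The main obstacle I anticipate is purely functional-analytic rather than probabilistic: choosing the right pair of topological vector spaces in duality so that (a) $V$ is l.s.c.\ for the weak topology induced by that pairing—this is where Corollary~\ref{Cor:ls.c. and convex} must be applied with the correct notion of weak convergence on $\nu$—and (b) $V^\star$ as computed above is genuinely the conjugate with respect to that same pairing, with no gap arising from test functions that are merely bounded measurable versus continuous bounded. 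The measurability and l.s.c.\ properties of $J_\psi$ from Lemma~\ref{lem:measurability} are exactly what is needed to close this gap, ensuring $\phi=J_\psi(0,\cdot)$ is an admissible (if possibly unbounded below—but it is bounded below by $\inf\psi$) competitor in $\mathcal{Z}_L$, so that no relaxation of the constraint set changes the dual value. Tightness/compactness issues are already handled by Proposition~\ref{prop:convex_compact}, so they need not be revisited here.
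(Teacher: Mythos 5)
Your proposal is correct and follows essentially the same route as the paper: weak duality is immediate, and the reverse inequality comes from Fenchel--Moreau applied to $V(\nu)=\mathcal{P}_L(\mu,\nu)$ (convex and l.s.c.\ by Corollary~\ref{Cor:ls.c. and convex}), collapsing the two suprema in $V^\star(\psi)$ to a single supremum over $\mathcal{A}(\mu)$ and identifying it with $\int_{\R^d}J_\psi(0,x)\,\mu(dx)$ via Lemma~\ref{lem:measurability}, then noting $(J_\psi(0,\cdot),\psi)\in\mathcal{Z}_L$. The only superfluous step is your worry about $\psi\notin C_b$: the conjugation is taken with respect to the pairing of measures with $C_b(\R^d)$, so no such $\psi$ ever enters.
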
 
\begin{proof} %\marginpar{This proof needs a fix.}
First, it is clear that the maximal value of the dual problem is less than or equal to the minimal value of the primal one, since, for all admissible $(\mathbb{P},\beta,\alpha) \in \mathcal{A}(\mu,\nu)$ and $(\phi,\psi)\in \mathcal{Z}_L$, we have
\begin{align*}
    \int_{\R^d} \psi(y)\,\nu(dy)-\int_{\mathbb{R}^d}\phi(x)\,\mu(dx)=&\ \mathbb{E}^{\alpha\ltimes \mathbb{P}}\Big[\psi(X_\tau)-\phi(X_0)\Big]
    %+\int_0^\tau L(t,X_t,\beta_t)dt
    \leq \ \mathbb{E}^{\alpha\ltimes \mathbb{P}}\Big[\int_0^\tau L(t,X_t,\beta_t)dt
    \Big]
\end{align*}
and so, it follows that $\mathcal{D}_L(\mu,\nu)\leq \mathcal{P}_L(\mu,\nu)$.

For the other direction, we have that $V(\nu)$ is convex and lower semi-continuous (see Corollary \ref{Cor:ls.c. and convex}). Then, we have
  $$
      V(\nu) =V^{\star \star} (\nu)= \sup_{\psi \in C_b(\mathbb{R}^d)}\Big\{\int_{\mathbb{R}^d} \psi(x) \nu(dx) -V^{\star}(\psi)\Big\}$$
  where
   \begin{align*}
       V^{\star}(\psi)=&\ \sup_{\nu \in \mathcal{M}(\mathbb{R}^d)}\Big\{\int_{\mathbb{R}^d} \psi(x) \nu(dx) - V(\nu)\Big\}\\
       =&\ \sup_{(\mathbb{P},\beta,\alpha) \in \mathcal{A}(\mu)}\Big\{\mathbb{E}^{\alpha\ltimes\mathbb{P}} \Big[\psi(X_\tau)   -\int_0^\tau L(t,X_t,\beta_t)dt\Big]
  \Big\}.
  \end{align*}
Note that we used that if $\nu$ is not a probability measure then $\mathcal{A}(\mu, \nu)=\emptyset$ and $V(\nu) =+\infty$, which does not affect the supremum in the definition of $V^\star$.
\noindent We then have by Lemma \ref{lem:measurability} that
$$
    V^*(\psi)=\int_{\R^d}J_\psi(0,x)\mu(dx)
$$
and
$$
   \mathcal{P}_L(\mu,\nu)=V^{**}(\nu)= \sup_{\psi\in C_b(\R^d)}\Big\{\int_{\R^d} \psi(y)\nu(dy)-\int_{\R^d} J_\psi(0,x)\mu(dx)\Big\}.
$$
By setting $\phi(x)=J_\psi(0,x)$, we have $(\phi,\psi)\in \mathcal{Z}_L$ and we have shown the reverse inequality, that is
$$
    \mathcal{D}_L(\mu,\nu)\geq \mathcal{P}_L(\mu,\nu),
$$
which completes the proof. $\qedhere$

\end{proof}
% An immediate consequence is to recover a Strassen type result, that is, if $\mathcal{D}_L(\mu,\nu)$ is bounded then $\mathcal{A}(\mu,\nu)$ is non-empty (with finite cost). \\

%  Now, we are ready to give the proof of our duality result.

% We can characterize the dual problem further using the dynamic programming principle.  We introduce the function 
% $$
%     J_\psi(t,x):=\sup_{(\mathbb{P},\beta,\alpha)\in \mathcal{A}_t(\delta_x)} \mathbb{E}^{\alpha\ltimes \mathbb{P}}\Big[\psi(X_{\tau})-\int_t^{\tau} L(s,X_{s},\beta_{s})ds\Big]
% $$
% where
% $$\mathcal{A}_t(\delta_x):=\{(\mathbb{P},\beta,\alpha)\in \mathcal{A}(\delta_x)\,:\,{X_s}_{\#}\mathbb{P}=\delta_x,\,\,\mbox{for all}\,\,\,0 \leq s \leq t\}.$$
% %\marginpar{\red This definition for $\mathcal{A}_t$ doesn't work - AP. \\\\
% %If you mean that it doesn't work since the decomposition is not satisfied, the decomposition here is up to $t$.}
% For a given potential $\psi \in C_b(\mathbb{R}^d)$, we have $J_\psi(0,x)=\phi_\psi(x)$ (recall the definition of $\phi_\psi$ in the proof of Theorem \ref{thm:stochastic_duality}). Moreover, we note that $J_\psi(t,x)\geq \psi(x)$ for all $\psi \in C_b(\mathbb{R}^d)$.
% %such that $(\phi,\psi)\in \mathcal{Z}_L$.
% So, the dual of (\ref{eqn:dual_stochastic}) becomes
% $$\sup_{\psi\in C_b(\R^d)}\Big\{\int_{\R^d} \psi(y)\,\nu(dy) - \int_{\R^d} J_\psi(0,x)\, \mu(dx)\Big\}.$$
 
On the other hand, by the standard dynamic programming principle, we get that $J_\psi$ is a viscosity solution of the following dynamic programming equation (see, for instance, \cite{tan2013optimal}):
$$\min\Big\{J_\psi(t,x) - \psi(x),-\partial_t J_\psi(t,x) - \frac{1}{2} \Delta J_\psi(t,x) - H\big(t,x,\nabla J_\psi(t,x)\big)\Big\}= 0,$$
where the Hamiltonian $H$ is given by the Legendre dual of $L$, i.e. we have
\begin{align*}
 H(t, x, z) = \sup_{ u \in U} \big[ z \cdot u - L (t, x, u)\big],\,\,\mbox{for all}\,\,(t,x,z)\in\mathbb{R}^+ \times \mathbb{R}^d \times \mathbb{R}^d.
\end{align*}
This viscosity solution $J_\psi$
can be viewed as the minimal supersolution, given by the infimum of the smooth supersolutions of 
\begin{equation}\label{HJBESec.4}
\begin{cases}
\partial_t J(t,x) +  \frac{1}{2} \Delta J(t,x) + H\big(t,x,\nabla J(t,x)\big) \leq 0 &\ \mbox{in}\,\,\ \mathbb{R}^+ \times \mathbb{R}^d,\\
\psi(x) \leq J(t,x)   &\ \mbox{on}\,\,\ \mathbb{R}^+ \times\mathbb{R}^d.
\end{cases}
\end{equation}

Finally, we get the following:
\begin{proposition}\label{prop:dual_equivalence}
We have\\

%\begin{align*}
\noindent $\mathcal{D}_L(\mu,\nu) = \sup\limits_{\psi\in C_b(\R^d)}\Big\{\int_{\R^d} \psi(y)\,\nu(dy) - \int_{\R^d} J_\psi(0,x)\, \mu(dx)\Big\}$
%\end{align*}
$$\qquad\,\,\;\,\,\;\,\,\,\,={\small{  \sup_{\psi \in C_b(\mathbb{R}^d),\,J\in C^{1,2}_b(\mathbb{R}^+ \times \mathbb{R}^d)}\Big\{\int_{\R^d} \psi(y)\nu(dy) - \int_{\R^d} J(0,x) \mu(dx);\  (\psi,J)\  \, \mbox{satisfies}\ \, \eqref{HJBESec.4}\Big\}}}.$$
%\end{align*}
\end{proposition}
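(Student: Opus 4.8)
The first equality is nothing but a restatement of Theorem~\ref{thm:stochastic_duality}, so the plan is to prove the second one. I will write $\mathcal{S}_\psi$ for the set of $J\in C^{1,2}_b(\R^+\times\R^d)$ for which $(\psi,J)$ satisfies \eqref{HJBESec.4}, denote by $A$ the middle supremum in the statement and by $B$ the right-hand supremum over pairs $(\psi,J)$ with $J\in\mathcal{S}_\psi$, and recall (from the discussion just preceding the statement) that $\psi\le J_\psi\le\sup\psi$ and $J_\psi=\inf_{J\in\mathcal{S}_\psi}J$ pointwise. Note that $\mathcal{S}_\psi\ne\emptyset$: every constant $c\ge\sup\psi$ belongs to it, since $H(t,x,0)=-\inf_u L(t,x,u)\le0$ because $L\ge0$.

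The inequality $B\le A$ is the easy direction. For any $J\in\mathcal{S}_\psi$ one has $J\ge J_\psi$, hence $\int_{\R^d}J(0,x)\,\mu(dx)\ge\int_{\R^d}J_\psi(0,x)\,\mu(dx)$ and therefore $\int\psi\,d\nu-\int J(0,\cdot)\,d\mu\le A$; taking the supremum over $(\psi,J)$ gives $B\le A$. (If one wishes to avoid the ``minimal supersolution'' fact, the same conclusion comes from a verification argument: applying It\^o's formula to $t\mapsto J(t,X_t)$ along an arbitrary $(\mathbb{P},\beta,\alpha)\in\mathcal{A}(\delta_x)$ and using $\partial_t J+\tfrac12\Delta J+\nabla J\cdot u\le\partial_t J+\tfrac12\Delta J+H(t,x,\nabla J)+L(t,x,u)\le L(t,x,u)$ together with $J\ge\psi$ shows $(J(0,\cdot),\psi)\in\mathcal{Z}_L$, so $\int\psi\,d\nu-\int J(0,\cdot)\,d\mu\le\mathcal{D}_L(\mu,\nu)$.)

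The substantive direction is $A\le B$, and for this it suffices to show that for a fixed $\psi\in C_b(\R^d)$ and $\varepsilon>0$ there is a $J\in\mathcal{S}_\psi$ with $\int_{\R^d}J(0,x)\,\mu(dx)\le\int_{\R^d}J_\psi(0,x)\,\mu(dx)+\varepsilon$; taking the supremum over $\psi$ then gives $A\le B$. The key lemma I would establish is that $\mathcal{S}_\psi$ is closed, up to an arbitrarily small error, under taking minima: given $J_1,J_2\in\mathcal{S}_\psi$ and $\delta>0$, the function $J_3:=\tfrac12(J_1+J_2)-\tfrac12\varphi_\delta(J_1-J_2)+\tfrac\delta2$, where $\varphi_\delta(s):=\sqrt{s^2+\delta^2}$ is smooth and convex with $|s|\le\varphi_\delta(s)\le|s|+\delta$ and $\varphi_\delta'\in(-1,1)$, lies in $\mathcal{S}_\psi$ and satisfies $\min(J_1,J_2)\le J_3\le\min(J_1,J_2)+\delta$. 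Indeed $J_3\in C^{1,2}_b$ and $J_3\ge\min(J_1,J_2)\ge\psi$; and writing $\lambda:=\tfrac12(1-\varphi_\delta'(J_1-J_2))\in(0,1)$ one computes $\nabla J_3=\lambda\nabla J_1+(1-\lambda)\nabla J_2$ and $\partial_t J_3+\tfrac12\Delta J_3=\lambda(\partial_t J_1+\tfrac12\Delta J_1)+(1-\lambda)(\partial_t J_2+\tfrac12\Delta J_2)-\tfrac14\varphi_\delta''(J_1-J_2)\,|\nabla(J_1-J_2)|^2$, so that, since $\varphi_\delta''\ge0$ and $z\mapsto H(t,x,z)$ is convex (being a Legendre transform of $L$), the HJB inequality in \eqref{HJBESec.4} for $J_1$ and $J_2$ yields the one for $J_3$.

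To conclude, I would use that $J_\psi(0,\cdot)$ is l.s.c.\ (Lemma~\ref{lem:measurability}) together with the Lindel\"of property of $\R^d$ to extract a countable family $\{F_m\}_{m\ge0}\subset\mathcal{S}_\psi$, with $F_0\equiv\sup\psi$, such that $\inf_m F_m(0,\cdot)=J_\psi(0,\cdot)$ pointwise on $\R^d$. Then, iterating the approximate-minimum construction with summable parameters $\delta_n=\varepsilon\,2^{-n-1}$, I would set $J^1:=F_0$ and let $J^{n+1}$ be the $\delta_n$-approximate minimum of $J^n$ and $F_n$, obtaining a sequence in $\mathcal{S}_\psi$ with $\psi\le J^n\le\min(F_0,\dots,F_{n-1})+\varepsilon/2\le\sup\psi+\varepsilon/2$ and $\limsup_n J^n(0,x)\le J_\psi(0,x)+\varepsilon/2$ for every $x$. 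Since $\{J^n(0,\cdot)\}$ is uniformly bounded and $\mu$ is a probability measure, the reverse Fatou lemma gives $\limsup_n\int_{\R^d} J^n(0,\cdot)\,d\mu\le\int_{\R^d} J_\psi(0,\cdot)\,d\mu+\varepsilon/2$, so $J=J^n$ works for $n$ large. I expect the main obstacle to be precisely this last step --- approximating the generally non-smooth viscosity solution $J_\psi$ by smooth supersolutions in the integrated sense needed here; it is what forces the lattice argument and, through it, uses the convexity of $H$ in the gradient variable.
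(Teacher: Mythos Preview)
Your argument is correct. The paper does not supply a proof of this proposition; it simply records, in the paragraph preceding the statement, that ``$J_\psi$ can be viewed as the minimal supersolution, given by the infimum of the smooth supersolutions of \eqref{HJBESec.4}'', and lets the proposition stand as an immediate consequence. So you are actually providing more than the paper does.

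The point worth highlighting is that the paper's implicit route---pointwise identity $J_\psi=\inf_{J\in\mathcal{S}_\psi}J$---only gives $\inf_{J}\int J(0,\cdot)\,d\mu \ge \int J_\psi(0,\cdot)\,d\mu$ directly, i.e.\ your easy direction $B\le A$. The reverse inequality needs one to swap the infimum with the integral, which is exactly the gap your smooth-minimum lattice construction fills: convexity of $H$ in the gradient variable ensures $\mathcal{S}_\psi$ is (approximately) stable under $\min$, so the pointwise infimum is realized along a single decreasing sequence in $\mathcal{S}_\psi$, and dominated convergence (reverse Fatou, as you use it) closes the argument. This is a standard device in the viscosity-solution literature for convex Hamiltonians, but it is precisely the step the paper leaves to the reader, and your write-up makes it explicit and clean. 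The computation for $J_3$ is correct; the only cosmetic remark is that you in fact get the slightly better bound $J_3\le\min(J_1,J_2)+\tfrac{\delta}{2}$.
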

%\begin{proof}
%    {[\red Give proof or citation]}
%\end{proof}
\section{Eulerian formulations} \label{sec:Eulerian}
In this section, we express two Eulerian formulations for (\ref{eqn:primal_weak_stochastic}); the strong Eulerian formulation poses the problem with a velocity field and the solution to a Fokker-Planck equation with stopping, while the convex Eulerian formulation poses the problem over phase-space distributions satisfying a convex set of inequalities. We prove the later to be equivalent to (\ref{eqn:primal_weak_stochastic}) by embedding the stochastic formulation and showing a weak duality inequality. We then prove that when the drift is uniformly bounded, the strong Eulerian formulation is also equivalent by Sobolev estimates. We note that these estimates
%and bounds on the moment of the distributions also derived in this section
will be used to prove dual attainment in the next section.
\subsection{Strong and convex Eulerian formulations}
First consider strong Eulerian formulation.
We say that $(m,v,\rho)$ belongs to $\mathcal{E}(\mu)$ if the following holds:
\begin{itemize}
    \item $m\in L^1(\R^+  \times \R^d) \cap L^2_{loc}(\R^+, H^1(\R^d))$ with $||m_t||_{L^1(\R^d)} \leq 1$, $v\in L^\infty(\R^+\times \R^d,\R^d)$
    \item $\rho$ is a probability measure on $\R^+\times \R^d$.
    \item
    %The following equation holds f
    For all smooth $\phi$ with compact support in $\R^+\times \R^d$,
    %\in C^1_b(\mathbb{R}^+ \times  \mathbb{R}^d)\cap L^2(\R^+,H^1(\R^d))$,
    we have
    \begin{align}\label{eqn:strong_Eulerian_evolution}
    &\ \int_{\R^+}\int_{\R^d}\bigg[\partial_t \phi(t,x)\,m(t,x)+\nabla \phi(t,x)\cdot\bigg(v(t,x)\,m(t,x)-\frac{1}{2}\nabla m(t,x)\bigg)\bigg]dxdt\nn\\
        &\ \qquad\qquad\,\,= \int_{\R^+}\int_{\R^d} \phi(\tau,y)\rho(d\tau,dy)-\int_{\R^d}\phi(0,x)\mu(dx).
    \end{align}
\end{itemize}
%We emphasize that $\rho$ is nonnegative is a significant constraint in this definition.

\noindent \hspace{0.1cm} Then, we consider the following problem:
\begin{align}
    \mathcal{P}_L^{\mathcal{E}}(\mu,\nu)=\inf_{(m,v,\rho)\in \mathcal{E}(\mu)}\Big\{\int_{\R^+}\int_{\R^d} L\big(t,x,v(t,x)\big)m(t,x)\,dx dt\,:\,\ \int_{\R^+} \rho(d\tau,\cdot)=\nu\Big\}.
\end{align}

\noindent \hspace{0.1cm} Now, let us introduce the following convex Eulerian formulation:
\begin{definition}
We say that  $(\eta,\rho) \in \tilde{\mathcal{E}}(\mu)$ if the following holds:
\begin{itemize}
    \item $\eta$ is a measurable map from $\R^+$ to sub-probability measures on $\R^d\times U$,
    \item $\rho$ is a probability measure on $\R^+\times \R^d$,
    \item The following equation holds for all smooth $\phi$ with $\phi$, $\partial_t\phi$, $\nabla \phi$ and $\Delta \phi$ uniformly bounded on $\R^+\times \R^d$:
    \begin{align}\label{eqn:convex_Eulerian_evolution}
        &\ \int_{\R^+}\int_{\R^d}\int_{U}\Big[\partial_t \phi(t,x)+\frac{1}{2}\Delta \phi(t,x)+\nabla \phi(t,x)\cdot u\Big]  \eta(dt,dx,du)\nn\\
        &  \qquad\,\,=    \ \int_{\R^+}\int_{\R^d} \phi(\tau,y)\rho(d\tau,dy)-\int_{\R^d}\phi(0,x)\mu(dx).
    \end{align}
\end{itemize}
\end{definition}
%{\red  We call $\eta$ a density process and $\rho$ a stopping time.} %is a pair of density process and stopping time
\noindent \hspace{0.1cm} We let
\begin{align}\label{eqn:Eulerian_primal_convex}
    \mathcal{P}_L^{\tilde{\mathcal{E}}}(\mu,\nu)=\inf_{(\eta,\rho)\in \tilde{\mathcal{E}}(\mu)}\Big\{\int_{\R^+}\int_{\R^d}\int_{U} L(t,x,u) \eta(dt,dx, du)\,:\,\ \int_{\R^+} \rho(d\tau,\cdot)=\nu\Big\}.
\end{align}
 
\noindent It is clear that $\mathcal{P}_L^{\tilde{\mathcal{E}}}(\mu,\nu)\leq \mathcal{P}_L^{\mathcal{E}}(\mu,\nu)$ because if $(m,v,\rho)\in \mathcal{E}(\mu)$ then for
$\eta(dt,dx, du)=\delta_{v(t,x)}(du)m(t,x)dtdx$, we have $(\eta,\rho)\in \tilde{\mathcal{E}}(\mu)$ with the same cost and target distribution. The following proposition shows that from a probability distribution, drift, and stopping time $(\mathbb{P},\beta,\alpha) \in \mathcal{A}(\mu)$, we can construct an admissible pair $(\eta,\rho)\in\tilde{\mathcal{E}}(\mu)$.

\begin{proposition} \label{prop:Eulerian_embedding}
Given $(\mathbb{P},\beta,\alpha) \in \mathcal{A}(\mu)$, there is a pair $(\eta,\rho)\in \tilde{\mathcal{E}}(\mu)$ such that
$$\int_{\R^+}\int_{\R^d}\int_{U} L(t,x,u) \eta(dt,dx, \mathrm{d}u)=\mathbb{E}^{\alpha\ltimes\mathbb{P}}\Big[\int_{0}^\tau L(t,X_t,\beta_t)dt\Big].
$$
%for all \,$\phi\in C_b(\R^+\times \R^d\times \R^d)$, and
%$$
%    \mathbb{E}^{\alpha\ltimes\mathbb{P}}\Big[ h(\tau,X_\tau)\Big]=\int_{\R^+}\int_{\R^d}h(\tau,y)\rho(d\tau,dy)
%$$
%for all \,$h\in C_b(\R^+\times \R^d)$.
In particular, we have
$$ \mathcal{P}^{\tilde{\mathcal{E}}}_L(\mu,\nu) \leq \mathcal{P}_L(\mu,\nu).
$$
\end{proposition}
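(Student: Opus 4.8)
The plan is to take $\eta$ to be the \emph{occupation measure} of the stopped controlled process and $\rho$ the joint law of the stopping time and the stopped position. Given $(\mathbb{P},\beta,\alpha)\in\mathcal{A}(\mu)$, I would define $\eta$ on $\R^+\times\R^d\times U$ by
\[
\int_{\R^+}\int_{\R^d}\int_{U}f(t,x,u)\,\eta(dt,dx,du):=\mathbb{E}^{\alpha\ltimes\mathbb{P}}\Big[\int_0^\tau f\big(t,X_t,\beta_t\big)\,dt\Big]
\]
for bounded measurable $f$, and $\rho$ to be the law of $(\tau,X_\tau)$ under $\alpha\ltimes\mathbb{P}$, a measure on $\R^+\times\R^d$. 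Taking $f=L$ immediately gives the asserted cost identity; moreover the $\R^d$-marginal of $\rho$ is $(X_\tau)_\#(\alpha\ltimes\mathbb{P})$, which equals $\nu$ whenever $(\mathbb{P},\beta,\alpha)\in\mathcal{A}(\mu,\nu)$ by \eqref{eqn:target_weak_stochastic}, so the inequality $\mathcal{P}^{\tilde{\mathcal{E}}}_L(\mu,\nu)\le\mathcal{P}_L(\mu,\nu)$ will follow once $(\eta,\rho)\in\tilde{\mathcal{E}}(\mu)$ is verified. The structural conditions are routine: disintegrating $\eta$ in time as $\eta=\int_{\R^+}\eta_t\,dt$, each $\eta_t$ is a nonnegative measure on $\R^d\times U$ of total mass $(\alpha\ltimes\mathbb{P})(\tau>t)\le 1$, hence a sub-probability measure depending measurably on $t$, and $\rho$ is a probability measure.

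The substance of the proof is the weak Fokker--Planck identity \eqref{eqn:convex_Eulerian_evolution}, which I would derive from It\^o's formula, first for the truncated data $(\mathbb{P},\beta,\alpha^{T,R})$ of Lemma~\ref{lem:truncation} so that $\tau\le T$ and $|X_t|\le R$ almost surely. For $\phi$ with $\phi,\partial_t\phi,\nabla\phi,\Delta\phi$ uniformly bounded, the semimartingale decomposition \eqref{eqn:weak_semimartingale_decomposition} gives
\[
\phi(t,X_t)=\phi(0,X_0)+\int_0^t\Big[\partial_s\phi+\nabla\phi\cdot\beta_s+\tfrac12\Delta\phi\Big](s,X_s)\,ds+\int_0^t\nabla\phi(s,X_s)\cdot dW^\beta_s.
\]
Evaluating this at the randomized stopping time $\tau$ — legitimate because, on the standard enlargement of $\Omega$ by an independent uniform variable, $\tau$ becomes a genuine $\mathbb{F}$-stopping time bounded by $T$, and the stochastic integral is then an $L^2$-bounded (hence uniformly integrable) martingale on $[0,T]$ of zero expectation at $\tau$ — and taking $\mathbb{E}^{\alpha^{T,R}\ltimes\mathbb{P}}$ (property iii) at the deterministic time $T$ ensures $\mathbb{E}[\int_0^T|\beta_s|\,ds]<\infty$, so every term is integrable) yields
\[
\mathbb{E}^{\alpha^{T,R}\ltimes\mathbb{P}}\big[\phi(\tau,X_\tau)\big]-\int_{\R^d}\phi(0,x)\,\mu(dx)=\mathbb{E}^{\alpha^{T,R}\ltimes\mathbb{P}}\Big[\int_0^{\tau}\Big[\partial_s\phi+\nabla\phi\cdot\beta_s+\tfrac12\Delta\phi\Big](s,X_s)\,ds\Big],
\]
which is exactly \eqref{eqn:convex_Eulerian_evolution} for the pair $(\eta^{T,R},\rho^{T,R})$ built from $(\mathbb{P},\beta,\alpha^{T,R})$ as above.

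It then remains to let $T,R\to\infty$. The truncated stopping times increase to $\tau$, so the occupation measures satisfy $\eta^{T,R}\uparrow\eta$ by monotone convergence, $\rho^{T,R}\rightharpoonup\rho$, and the cost identity passes to the limit by monotone convergence with $f=L\ge 0$ (consistently with Lemma~\ref{lem:truncation}); the right-hand side of \eqref{eqn:convex_Eulerian_evolution} converges by the weak-convergence statement of Lemma~\ref{lem:truncation} applied to the bounded continuous functions $\phi(\cdot,\cdot)$ and $\phi(0,\cdot)$. The one genuine obstacle is the left-hand side, which requires $\int_{\R^+}\int_{\R^d}\int_U(1+|u|)\,\eta(dt,dx,du)<\infty$, i.e.\ $\mathbb{E}^{\alpha\ltimes\mathbb{P}}[\tau]+\mathbb{E}^{\alpha\ltimes\mathbb{P}}[\int_0^\tau|\beta_s|\,ds]<\infty$, so that the $n$-cutoff passes by dominated convergence. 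For the triplets that matter for the stated inequality — those of finite cost — this is automatic under the coercivity \eqref{eqn:L-assumption-1}, since $|u|\le|u|^p+1\le c^{-1}L(t,x,u)$ and $1\le c^{-1}L(t,x,u)$ force both expectations to be bounded by $c^{-1}\mathbb{E}^{\alpha\ltimes\mathbb{P}}[\int_0^\tau L\,dt]$; conversely, when this fails the cost is $+\infty$ and $\mathcal{P}^{\tilde{\mathcal{E}}}_L\le\mathcal{P}_L$ holds vacuously (the bounded-drift regime, where $|\beta|$ is already bounded, reduces to controlling $\mathbb{E}[\tau]$ and is handled by the same approximation). This completes the construction and hence the inequality.
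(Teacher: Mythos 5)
Your proof is correct and follows essentially the same route as the paper's: define $\eta$ as the occupation measure and $\rho$ as the law of $(\tau,X_\tau)$ via Riesz representation, then verify \eqref{eqn:convex_Eulerian_evolution} by It\^o's formula. You are in fact more careful than the paper on two points it glosses over — passing through the truncation of Lemma~\ref{lem:truncation} to justify the optional stopping of the stochastic integral, and using the coercivity \eqref{eqn:L-assumption-1} to ensure $\int(1+|u|)\,d\eta<\infty$ so that the drift term in \eqref{eqn:convex_Eulerian_evolution} is well defined for non-compactly-supported test functions.
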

\begin{proof}
Given $(\mathbb{P},\beta,\alpha) \in \mathcal{A}(\mu)$, we find the pair $(\eta,\rho)$ from Riesz representation given by the formula,
$$
\int_{\mathbb{R}^+}\int_{\mathbb{R}^d} \int_{U} \phi(t,x,u)\eta(dt,dx,du)=\mathbb{E}^{\alpha\ltimes\mathbb{P}}\Big[\int_0^\tau \phi(t,X_t,\beta_t)dt\Big],\,\,\mbox{for all}\,\,\,\phi \in C_b(\mathbb{R}^+ \times \mathbb{R}^d \times U),$$
and
$$\int_{\mathbb{R}^+}\int_{\mathbb{R}^d} h(\tau,y)\rho(d\tau,dy)=\mathbb{E}^{\alpha\ltimes\mathbb{P}}\big[h(\tau,X_\tau)\big],\,\,\mbox{for all}\,\,\,h\in C_b(\mathbb{R}^+ \times \mathbb{R}^d).$$
Then, one can check easily that the pair $(\eta,\rho) \in \tilde{\mathcal{E}}(\mu)$. In fact, if  $\eta_t$ is the disintegration of $\eta$ with respect to the Lebesgue measure on $\R^+$ then, 
%(seeing that $\tilde{\eta}([0,T]\times \R^d\times \R^d)\leq T$ for all $T$)
 for every
 smooth $\phi$ with compact support in $\mathbb{R}^+ \times \mathbb{R}^d$, we have using Ito's formula the following:
\begin{align*}
    &\ \qquad \,\, \int_{\mathbb{R}^+} \int_{\mathbb{R}^d} \int_{U} \Big[\partial_t \phi(t,x) + \frac{1}{2} \Delta \phi(t,x) + u \cdot \nabla \phi(t,x)\Big]\eta_t(dx,du)dt\\
    &\ \qquad\qquad=\,\, \mathbb{E}^{\alpha\ltimes\mathbb{P}}\Big[\int_{0}^\tau \Big(\partial_t \phi(t,X_t)  +\beta_t\cdot \nabla \phi(t,X_t) + \frac{1}{2}\Delta \phi(t,X_t)\Big) dt\Big]\\
    &\ \qquad\qquad=\,\, \mathbb{E}^{\alpha\ltimes \mathbb{P}}\Big[\phi(\tau,X_\tau) - \phi(0,X_0)\Big]=\int_{\R^d}\int_{\R^+} \phi(\tau,y)\rho(d\tau,dy)-\int_{\R^d} \phi(0,x)\mu(dx).
\end{align*}
Clearly, we have that if $X_\tau\sim_{\alpha\ltimes \mathbb{P}}\nu$\, then
\,$\int_{\mathbb{R}^+} \rho(d\tau,\cdot)= \nu$. Moreover, we have the following:
\begin{equation*}
\int_{\mathbb{R}^+} \int_{\mathbb{R}^d} \int_{U} L(t,x,u) \eta(dt,dx,du)=\mathbb{E}^{\alpha\ltimes\mathbb{P}} \Big[\int_0^\tau L(t,X_t,\beta_t)dt\Big],
\end{equation*}
%and t
which completes the proof that $ \mathcal{P}^{\tilde{\mathcal{E}}}_L(\mu,\nu) \leq \mathcal{P}_L(\mu,\nu)$. $\qedhere$
\end{proof} 

On the other hand, we have the following duality for the convex Eulerian problem (\ref{eqn:Eulerian_primal_convex}).
%and the Hamilton-Jacobi-Bellman Inequalities \eqref{HJBESec.4}:
 \begin{theorem}\label{thm:Eulerian_duality}
 The following equalities hold:
 $$
 D_L(\mu,\nu)= \mathcal{P}_L^{\tilde{\mathcal{E}}}(\mu,\nu)=\mathcal{P}_L(\mu,\nu).
 $$
 \end{theorem}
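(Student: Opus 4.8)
The plan is to deduce the equality from results already in hand together with one new \emph{weak duality} inequality. By Proposition~\ref{prop:Eulerian_embedding} we have $\mathcal{P}_L^{\tilde{\mathcal{E}}}(\mu,\nu)\le\mathcal{P}_L(\mu,\nu)$, and by Theorem~\ref{thm:stochastic_duality} we have $\mathcal{P}_L(\mu,\nu)=D_L(\mu,\nu)$; hence it suffices to prove $D_L(\mu,\nu)\le\mathcal{P}_L^{\tilde{\mathcal{E}}}(\mu,\nu)$, which then forces the three values to coincide. So I would fix an arbitrary $(\eta,\rho)\in\tilde{\mathcal{E}}(\mu)$ with $\int_{\R^+}\rho(d\tau,\cdot)=\nu$ and $\int L\,d\eta<+\infty$ (nothing to prove otherwise), and, on the dual side, use Proposition~\ref{prop:dual_equivalence} to replace $D_L(\mu,\nu)$ by a supremum over pairs $(\psi,J)$ with $\psi\in C_b(\R^d)$ and $J\in C^{1,2}_b(\R^+\times\R^d)$ satisfying the quasivariational inequality \eqref{HJBESec.4}. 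The target is then the single estimate $\int_{\R^d}\psi\,d\nu-\int_{\R^d}J(0,\cdot)\,d\mu\le\int L\,d\eta$.

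The core computation is to insert $\phi=J$ as a test function in the evolution constraint \eqref{eqn:convex_Eulerian_evolution}. Since $H(t,x,z)=\sup_{u\in U}\big[z\cdot u-L(t,x,u)\big]$ is the Legendre transform of $L$, one has the Fenchel inequality $u\cdot\nabla J(t,x)\le L(t,x,u)+H\big(t,x,\nabla J(t,x)\big)$, and adding it to the supersolution inequality $\partial_t J+\tfrac12\Delta J+H(t,x,\nabla J)\le 0$ produces the pointwise bound
\[
\partial_t J(t,x)+\tfrac12\Delta J(t,x)+u\cdot\nabla J(t,x)\ \le\ L(t,x,u)\qquad\text{on }\ \R^+\times\R^d\times U .
\]
Substituting this into the left-hand side of \eqref{eqn:convex_Eulerian_evolution}, bounding the right-hand side from below by using $J(\tau,y)\ge\psi(y)$ for every $\tau$ (the second line of \eqref{HJBESec.4}), $\rho\ge 0$, and $\int_{\R^+}\rho(d\tau,\cdot)=\nu$, I get precisely $\int_{\R^d}\psi\,d\nu-\int_{\R^d}J(0,\cdot)\,d\mu\le\int L\,d\eta$. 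Taking the supremum over admissible $(\psi,J)$ and the infimum over admissible $(\eta,\rho)$ then yields $D_L(\mu,\nu)\le\mathcal{P}_L^{\tilde{\mathcal{E}}}(\mu,\nu)$, closing the chain.

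The one point that needs care — and the main technical obstacle — is that $\eta$ need not have finite total mass in time, so $\phi=J$ must be legitimized by truncation: one tests instead with $\phi_T(t,x)=J(t,x)\chi_T(t)$, where $\chi_T$ is a smooth cutoff equal to $1$ on $[0,T]$ and vanishing after $T+1$. Testing \eqref{eqn:convex_Eulerian_evolution} first with functions of $t$ alone (smoothed ramps whose derivative is compactly supported in time) shows that $\eta$ has finite mass on every bounded time interval and that $\int_T^{T+1}|\eta_t|\,dt\le\rho(\{\tau>T\})$; this makes the truncated identity rigorous and bounds the spurious term $\int J\,\chi_T'\,d\eta$ by $O\big(\rho(\{\tau>T\})\big)\to 0$, while the boundary defect $\int(1-\chi_T(\tau))\,\psi(y)\,\rho(d\tau,dy)$ is likewise $O\big(\rho(\{\tau>T\})\big)$ because $\rho$ is a probability measure. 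Letting $T\to\infty$ recovers the desired inequality. (In the superlinear regime \eqref{eqn:L-assumption-1} this step trivializes: finite cost forces $\eta$ to have finite total mass, so, after mollifying $J$ in $x$ if one insists on genuinely smooth test functions, $J$ is directly admissible.) Everything else is soft convex duality; the limiting/truncation argument is the only delicate part.
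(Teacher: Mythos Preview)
Your proposal is correct and follows essentially the same route as the paper: establish $D_L(\mu,\nu)\le \mathcal{P}_L^{\tilde{\mathcal{E}}}(\mu,\nu)$ by testing the evolution constraint \eqref{eqn:convex_Eulerian_evolution} with a smooth supersolution $J\in C^{1,2}_b$, using $\psi\le J$ on the $\rho$-side and the Fenchel/supersolution inequalities on the $\eta$-side, then close the chain via Proposition~\ref{prop:Eulerian_embedding} and Theorem~\ref{thm:stochastic_duality}. The only difference is that the paper plugs $\phi=J$ directly into \eqref{eqn:convex_Eulerian_evolution} (this is legitimate since the test-function class there consists of smooth functions with $\phi,\partial_t\phi,\nabla\phi,\Delta\phi$ uniformly bounded, and $J\in C^{1,2}_b$), whereas you supply a time-cutoff argument to justify the substitution; your extra care is harmless, and indeed unnecessary in the superlinear regime where finite cost forces $\eta$ to have finite total mass, as you yourself note.
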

 \begin{proof}
 Take an admissible pair $(\eta,\rho) \in \tilde{\mathcal{E}}(\mu)$ with $\int_{\R^+}\rho(d\tau,\cdot)=\nu$ and let $(J,\psi)$ satisfy \eqref{HJBESec.4}.
   Then, we have
\begin{align*}
 &\ \qquad\qquad \int_{\mathbb{R}^d} \psi(y)\,\nu(dy) - \int_{\mathbb{R}^d} J(0,x)\,\mu(dx)\\
  &\ \qquad \qquad \qquad \leq \int_{\mathbb{R^+}}\int_{\mathbb{R}^d} J(t,x)\,\rho(dt,dx) - \int_{\mathbb{R}^d} J(0,x)\,\mu(dx)\\
&\ \qquad\qquad \qquad= \int_{\mathbb{R}^+} \int_{\mathbb{R}^d} \int_{U} \Big[\partial_t J(t,x) + \frac{1}{2} \Delta J(t,x) +u \cdot \nabla J(t,x)\Big]\eta_t(dx,du)dt \\
&\ \qquad\qquad \qquad\leq  \int_{\mathbb{R}^+} \int_{\mathbb{R}^d}\int_{U} L(t,x,u)\eta(dt,dx,du).
\end{align*}
This shows that $\mathcal{D}_L(\mu,\nu)\leq \mathcal{P}_L^{\tilde{\mathcal{E}}}(\mu,\nu)$ in view of Proposition \ref{prop:dual_equivalence}. We have shown that  $\mathcal{P}_L^{\tilde{\mathcal{E}}}(\mu,\nu)\leq \mathcal{P}_L(\mu,\nu)$ in Proposition \ref{prop:Eulerian_embedding}, and that $\mathcal{P}_L(\mu,\nu)=\mathcal{D}_L(\mu,\nu)$ in Theorem \ref{thm:stochastic_duality}, which completes the chain of equalities. $\qedhere$
\end{proof}

\subsection{Regularity
%of the Eulerian Problem
}
We will now partly complete the equivalence (between the strong and the convex Eulerian formulations) by addressing the strong Eulerian formulation in the case where the drift is bounded. We first need a result on the truncation in time and space of pairs $(\eta,\rho)\in \mathcal{E}(\mu)$.  We make use of the truncation in time and space of Lemma \ref{lem:truncation}.
 
%We suppose that $p\geq\frac{d+2}{2}$ (with $p \neq 2$) and $\mu= \hat{\mu}\,dx$ for $\hat{\mu} \in L^2(\R^d)$.  Then, we have the following:
\begin{theorem}\label{thm:Eulerian_regularity}
Suppose that %$p\geq1+\frac{d}{2}$ (with $p \neq 2$) and
$\mu= \hat{\mu}\,dx$ for $\hat{\mu} \in L^2(\R^d)$.  Then, for any %we have the following:}
%Given $p>\frac{d+2}{2}$ and $\mu= \hat{\mu}\, dx$ for $\hat{\mu} \in L^2(\R^d)$,
$(\eta,\rho)\in {\tilde{\mathcal{E}}}(\mu)$ with compact support in $\R^+\times \R^d\times U$ ($0\leq t < T$, $|x|< R$, $|u|< \overline{\mathfrak{u}}$), there is a $(m,v,\rho)\in \mathcal{E}(\mu)$ such that $\eta_t(dx,\R^d)=m(t,x)dx$, and $(m,v,\rho)$ has a cost less than or equal to that of $(\eta,\rho)$. Moreover, we have
%There
the following uniform estimate %holds that
\begin{align}\label{eqn:energy_estimate}
\int_{0}^T\int_{\R^d} |\nabla m(t,x)|^2dx\, dt\leq 2\|\mu\|_{L^2(\R^d)}^2+CT,
\end{align}
where the constant $C$ depends only on the bound of the drift \,$\overline{\mathfrak{u}}$ and the dimension $d$.
In particular,
$$
    f\mapsto \int_{\R^d}\int_0^T f(x)\rho(d\tau,dx)
$$
is a continuous linear functional of $H^1(\R^d)$. Finally, when the drift is bounded, we have
$$
    \mathcal{P}_L^{\tilde{\mathcal{E}}}(\mu,\nu)=\mathcal{P}_L^{\mathcal{E}}(\mu,\nu).
$$
\end{theorem}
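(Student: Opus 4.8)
The statement bundles three conclusions from a single truncated pair $(\eta,\rho)\in\tilde{\mathcal E}(\mu)$ with compact support: (1) the construction of a strong triple $(m,v,\rho)\in\mathcal E(\mu)$ with $\eta_t(dx,\R^d)=m(t,x)\,dx$ and no greater cost, (2) the Sobolev energy estimate \eqref{eqn:energy_estimate}, and (3) the equality $\mathcal P_L^{\tilde{\mathcal E}}(\mu,\nu)=\mathcal P_L^{\mathcal E}(\mu,\nu)$. The plan is to do these in that order. For (1): set $m(t,x)\,dx:=\eta_t(dx,\R^d)$, the $x$-marginal of the disintegration, and define the velocity field by barycentric projection, $v(t,x):=\dfrac{\int_U u\,\eta_t(dx,du)}{m(t,x)\,dx}$ (the Radon–Nikodym derivative of the vector measure $\int_U u\,\eta_t(dx,du)$ against $m(t,x)\,dx$, defined arbitrarily where $m=0$). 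Testing \eqref{eqn:convex_Eulerian_evolution} against $\phi$ and using convexity of $L$ in $u$ together with Jensen's inequality, $\int_U L(t,x,u)\,\eta_t(dx,du)\ge L(t,x,v(t,x))\,m(t,x)\,dx$, gives the cost comparison; the continuity equation \eqref{eqn:strong_Eulerian_evolution} follows because $\int_U u\cdot\nabla\phi\,\eta_t(dx,du)=v\cdot\nabla\phi\,m$, and the $\frac12\Delta\phi\,m$ term is rewritten as $\nabla\phi\cdot(-\frac12\nabla m)$ once we know $m_t\in H^1$ — which is exactly what (2) will supply. Since the drift is bounded by $\overline{\mathfrak u}$, $v\in L^\infty$ with $\|v\|_\infty\le\overline{\mathfrak u}$, and $\|m_t\|_{L^1}\le 1$ from the sub-probability constraint.

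**Step (2): the energy estimate.** This is the heart of the matter and builds on the Sobolev-space approach of \cite{GKPS}. The idea is to run the parabolic equation $\partial_t m=\frac12\Delta m-\nabla\cdot(vm)$ (with a measure source $-\rho$ killing mass, and initial data $\mu\in L^2$) and test it against $m$ itself. Formally, for $t\le T$,
\begin{align*}
\frac12\|m_t\|_{L^2}^2+\frac12\int_0^t\int_{\R^d}|\nabla m|^2\,dx\,ds
&=\frac12\|\mu\|_{L^2}^2+\int_0^t\int_{\R^d}(vm)\cdot\nabla m\,dx\,ds-(\text{sink term}),
\end{align*}
and the sink term has a favourable sign because stopping removes mass. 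Bounding $|\int vm\cdot\nabla m|\le\|v\|_\infty\|m\|_{L^2}\|\nabla m\|_{L^2}\le\frac14\|\nabla m\|_{L^2}^2+\|v\|_\infty^2\|m\|_{L^2}^2$ and absorbing, then using $\|m_s\|_{L^2}^2\le\|m_s\|_{L^1}\|m_s\|_\infty$ or simply a Gronwall argument in $\|m_s\|_{L^2}^2$, yields $\int_0^T\int|\nabla m|^2\le 2\|\mu\|_{L^2}^2+CT$ with $C=C(\overline{\mathfrak u},d)$. To make this rigorous one works with a mollified/regularized equation (smoothing $v$ and the measure data, or using the heat semigroup with a Duhamel iteration) to justify the energy identity, then passes to the limit; compact support in $(t,x,u)$ ensures everything stays in $L^2$ and the constants are uniform. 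The continuity of $f\mapsto\int_{\R^d}\int_0^T f\,\rho(d\tau,dx)$ on $H^1(\R^d)$ then follows: plug a time-independent test function (cut off near $t=T$) into \eqref{eqn:strong_Eulerian_evolution}, express the $\rho$-integral in terms of $\int\phi\,d\mu$ and the space-time integrals of $m$, $\nabla m$, $vm$, and bound each by $\|m\|_{L^2(L^2)}$, $\|\nabla m\|_{L^2(L^2)}$, $\overline{\mathfrak u}\|m\|_{L^2(L^2)}$ times the $H^1$ norm of $\phi$, using \eqref{eqn:energy_estimate} and $\|m_t\|_{L^1}\le1$.

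**Step (3): equality of the two Eulerian values.** The inequality $\mathcal P_L^{\tilde{\mathcal E}}\le\mathcal P_L^{\mathcal E}$ is already noted in the text ($\eta=\delta_{v(t,x)}m$). For the reverse under bounded drift $U$: take a near-optimal $(\eta,\rho)\in\tilde{\mathcal E}(\mu)$; by Lemma \ref{lem:truncation} (transported to the Eulerian side via the correspondences already established, or by a direct truncation of $(\eta,\rho)$) approximate it by compactly supported pairs with cost converging to the optimal value and target marginals converging weakly to $\nu$; apply parts (1)–(2) to each to get $(m^k,v^k,\rho^k)\in\mathcal E(\mu)$ with cost $\le$ that of the $k$-th truncation; the uniform estimate \eqref{eqn:energy_estimate} plus $\|m^k_t\|_{L^1}\le1$ and $\|v^k\|_\infty\le\overline{\mathfrak u}$ gives weak-$*$/weak compactness in the relevant spaces (and tightness of $\rho^k$), so a subsequence converges to an admissible $(m,v,\rho)\in\mathcal E(\mu)$ with $\int\rho(d\tau,\cdot)=\nu$ and, by lower semicontinuity of the cost functional (convexity of $L$ in $v$ and the usual Ioffe-type l.s.c. for $\int L(t,x,v)m$ under joint weak convergence of $(m,vm)$), cost no larger than $\lim$ of the truncated costs $=\mathcal P_L^{\tilde{\mathcal E}}(\mu,\nu)$. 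Hence $\mathcal P_L^{\mathcal E}\le\mathcal P_L^{\tilde{\mathcal E}}$.

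**Main obstacle.** The delicate point is Step (2): rigorously justifying the energy identity for a Fokker–Planck equation whose source is only a measure $\rho$ and whose drift $v$ is merely $L^\infty$ (not continuous), and checking that the killing term carries the right sign so it can be dropped. The regularization-and-limit argument must be done carefully so that the constant $C$ depends only on $\overline{\mathfrak u}$ and $d$ and not on $T$, $R$, or the approximation parameter; once \eqref{eqn:energy_estimate} is in hand, (1), (3) and the $H^1$-continuity of the $\rho$-functional are comparatively routine compactness-and-l.s.c. arguments. A secondary technical nuisance is making precise the claim that truncation in the stochastic picture (Lemma \ref{lem:truncation}) descends to a truncation on $(\eta,\rho)\in\tilde{\mathcal E}(\mu)$ preserving the evolution equation \eqref{eqn:convex_Eulerian_evolution} — this uses the same cut-and-freeze construction as the map $S^{T,R}$ there, together with the embedding of Proposition \ref{prop:Eulerian_embedding}.
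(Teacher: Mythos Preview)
Your overall architecture---mollify, test the Fokker--Planck equation against $m$ itself, drop the nonnegative sink term, and absorb the drift term via Young---is the same as the paper's. The genuine gap is in how you close the energy estimate. After Young you are left needing a bound on $\int_0^T\|m_t\|_{L^2}^2\,dt$ in terms of $\int_0^T\|\nabla m_t\|_{L^2}^2\,dt$ and data, with a constant independent of $T$. Neither of the two options you offer does this: the interpolation $\|m\|_{L^2}^2\le\|m\|_{L^1}\|m\|_\infty$ requires an $L^\infty$ bound on $m$ that is simply not available (the regularised density has no uniform-in-$\epsilon$ $L^\infty$ control); and the Gronwall route gives $\|m_t\|_{L^2}^2\le e^{C\overline{\mathfrak u}^2 t}\|\mu\|_{L^2}^2$, so the constant in the gradient bound grows \emph{exponentially} in $T$, not linearly as the theorem asserts. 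You even flag in your ``Main obstacle'' paragraph that $C$ must not depend on $T$, but your proposed closure does not deliver that. The missing ingredient is the Gagliardo--Nirenberg--Sobolev inequality together with $L^1$--$L^{2^*}$ interpolation:
\[
\|m_t\|_{L^2(\R^d)}\le \delta\,\|\nabla m_t\|_{L^2(\R^d)}+C(\delta)\,\|m_t\|_{L^1(\R^d)}.
\]
Since $\|m_t\|_{L^1}\le 1$, this gives $\int_0^T\|m_t\|_{L^2}^2\,dt\le 2\delta^2\int_0^T\|\nabla m_t\|_{L^2}^2\,dt+2C(\delta)^2 T$; for $\delta$ small the first term is absorbed and you obtain exactly $2\|\mu\|_{L^2}^2+CT$ with $C=C(\overline{\mathfrak u},d)$.

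A secondary point on Step~(3): the paper does not run a weak-compactness/l.s.c.\ argument directly on truncated Eulerian pairs. Instead it goes through the stochastic side: start from $(\mathbb P,\beta,\alpha)\in\mathcal A(\mu)$, apply the truncation $\alpha^{T,R}$ of Lemma~\ref{lem:truncation}, push forward via Proposition~\ref{prop:Eulerian_embedding} to obtain compactly supported $(\eta^{T,R},\rho^{T,R})$, and only then apply parts~(1)--(2). Because the truncations at different $(T,R)$ are \emph{nested} (they coincide on $[0,T_1]\times B(0,R_1)$ for $T_1<T_2$, $R_1<R_2$), the limiting $(m,v,\rho)\in\mathcal E(\mu)$ exists directly, without any Ioffe-type lower semicontinuity; one then concludes $\mathcal P_L^{\mathcal E}(\mu,\nu)\le\mathcal P_L(\mu,\nu)$ and invokes Theorem~\ref{thm:Eulerian_duality}. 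Your compactness route would also work once the correct energy estimate is in hand, but it is more laborious and the nested-truncation shortcut is worth knowing.
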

\begin{proof}
First, we use convolution to approximate the pair 
%we approximate by convolution the pair
 $(\eta,\rho)$ by smooth densities $(\eta^\epsilon,\rho^\epsilon)$ and the measure $\mu$ by $\mu^\varepsilon$ with $(\eta^\epsilon,\rho^\epsilon)\in \tilde{\mathcal{E}}(\mu^\epsilon)$. We define 
    $$    m^\epsilon(t,x)= \int_{\mathbb{R}^d} \eta^\epsilon(t,x,u)du
    $$
    and
    $$    v^\epsilon(t,x) = \frac{\int_{\mathbb{R}^d} u\, \eta^\epsilon(t,x,u)du}{m^\epsilon(t,x)},
    $$
    with $v^\epsilon(t,x)=0$ if $m^\epsilon(t,x)=0$.
    We note that $(m^\epsilon,v^\epsilon,\rho^\epsilon)\in \mathcal{E}(\mu^\epsilon)$.
    %where  $\mu^\epsilon \rightharpoonup \mu$.
    By Jensen's inequality, the cost of $(m^\epsilon,v^\epsilon,\rho^\epsilon)$ is less or equal to that of $(\eta,\rho)$ within a factor of $\epsilon$. Then, using $m^\epsilon$ as a test function in (\ref{eqn:strong_Eulerian_evolution}), we obtain
    \begin{align*}
    &\ \int_{\R^d}\int_{0}^Tm^\epsilon (\tau,y)\rho^\epsilon(d\tau,dy)-\int_{\R^d} \big(\hat{\mu}^\epsilon(x)\big)^2dx\\
    & \qquad =\int_{0}^T\int_{\R^d}\Big[\frac{1}{2}\partial_t \big(m^\epsilon(t,x)^2\big)-\frac{1}{2}\big|\nabla m^\epsilon(t,x)\big|^2+v^\epsilon(t,x)\cdot \nabla m^\epsilon (t,x)\, m^\epsilon(t,x)\Big]dx\, dt.
    \end{align*}
     Then,
    %we can estimate the terms by
    \begin{align*}
    &\ \frac{1}{2}\int_{0}^T\int_{\R^d}|\nabla m^\epsilon(t,x)|^2dx\, dt \leq \frac{1}{2} \|\hat{\mu}^\epsilon\|_{L^2(\R^d)}^2+
\overline{\mathfrak{u}}\int_{0}^T\int_{\R^d} |\nabla m^\epsilon(t,x)|  m^\epsilon(t,x)dx\, dt.
%\|(m^\epsilon)^{\frac{1}{q}}\nabla m^\epsilon\|_{L^q(\R^+\times \R^d)}.
    \end{align*}
    Yet, we have
  $$  \int_{0}^T\int_{\R^d} |\nabla m^\epsilon(t,x)|  m^\epsilon(t,x)dx\, dt \leq \frac{1}{2}\int_{0}^T\int_{\R^d} \bigg(\delta|\nabla m^\epsilon(t,x)|^2 + \frac{1}{\delta} {m^\epsilon(t,x)}^2\bigg) dx\, dt.$$
 For the last term %we have,
 using Gagliardo-Nirenberg-Sobolev embedding %(requiring $q \leq 1+2/d$ so that $r/q \leq 2^*$) 
 and an interpolation of the $L^1$ and $L^{2^*}$ norms, we have
\begin{align*}
    \|m^\epsilon_t\|_{L^2( \R^d)}\leq \ \delta \|\nabla m^\epsilon_t\|_{L^2(\R^d)}
    + C(\delta)\|m^\epsilon_t\|_{L^1(\R^d)},
\end{align*}
which implies that
  $$  \int_{0}^T\int_{\R^d} {m^\epsilon(t,x)}^2dx\, dt \leq 2{\delta}^2 \int_{0}^T\int_{\R^d} |\nabla m^\epsilon(t,x)|^2 dx\, dt + 2 C(\delta)^2 \int_{0}^T\bigg(\int_{\R^d} m^\epsilon(t,x) dx\bigg)^2\, dt.$$
  %  By a further application of H\"{o}lder's inequality, with exponents $\frac{1}{r}+\frac{1}{2}=\frac{1}{q}$, $r=\frac{2q}{2-q}$, we get
%\begin{align*}
 %   \|(m^\epsilon)^{\frac{1}{q}}\nabla m^\epsilon \|_{L^q(\R^+\times \R^d)}\leq  \|\nabla m^\epsilon\|_{L^2(\R^+\times \R^d)}\|m^\epsilon\|_{L^\frac{r}{q}(\R^+\times \R^d)}^\frac{1}{q}.
%\end{align*}
%For the last term %we have,
 %using Gagliardo-Nirenberg-Sobolev embedding (requiring $q \leq 1+2/d$ so that $r/q \leq 2^*$) and an interpolation of the $L^1$ and $L^{2*}$ norms, for any small $\delta>0$ we have
%\begin{align*}
 %   \|m^\epsilon\|_{L^{\frac{r}{q}}(\R^+\times \R^d)}\leq \delta \|\nabla m^\epsilon\|_{L^2(\R^+\times \R^d)}
  %  + C(\delta)\|m^\epsilon\|_{L^1(\R^+\times \R^d)}.
%\end{align*}
Yet, $\|m^\epsilon(t,\cdot)\|_{L^1(\R^d)} \leq 1 + \epsilon$, for every $t$. Then, choosing $\delta>0$ small enough,
%such that $\delta<\frac{1}{6l}$,
% is bounded by the cost (because of \eqref{eqn:L-assumption-1}) and putting together the estimates with applications of Young's inequality,
 we get (\ref{eqn:energy_estimate}) for $m^\epsilon$.
%     $$
% \int_{\R^+}\int_{\R^d}\big|\nabla m^\epsilon(t,x)\big|^2dxdt\leq 2\|\hat{\mu}\|^2_{L^2(\R^d)}+CT.
%     $$
  %  Then,
    %we can estimate the terms by
    %\begin{align%*}
    %&\ \frac{1}{2}\int_{\R^+}\int_{\R^d}|\nabla m^\epsilon(t,x)|^2dx\, dt\\
%&\ \qquad \leq \frac{1}{2} \|\hat{\mu}^\epsilon\|_{L^2(\R^d)}^2+\left(\int_{\R^+}\int_{\R^d} |v^\epsilon(t,x)|^p m^\epsilon(t,x)dx\, dt\right)^\frac{1}{p}\|(m^\epsilon)^{\frac{1}{q}}\nabla m^\epsilon\|_{L^q(\R^+\times \R^d)}.
 %   \end{align*}
 %   By a further application of H\"{o}lder's inequality, with exponents $\frac{1}{r}+\frac{1}{2}=\frac{1}{q}$, $r=\frac{2q}{2-q}$, we get
%\begin{align*}
  %  \|(m^\epsilon)^{\frac{1}{q}}\nabla m^\epsilon \|_{L^q(\R^+\times \R^d)}\leq  \|\nabla m^\epsilon\|_{L^2(\R^+\times \R^d)}\|m^\epsilon\|_{L^\frac{r}{q}(\R^+\times \R^d)}^\frac{1}{q}.
%\end{align*}
%Seeing that $\|m^\epsilon\|_{L^1(\R^+\times \R^d)}$ and $\int_{\R^+}\int_{\R^d} |v^\epsilon(t,x)|^p m^\epsilon(t,x)dx dt$ are   bounded by the cost (because of \eqref{eqn:L-assumption-1}) and putting together the estimates with applications of Young's inequality, we have that
%Consequently,
 %   $$
  %      \int_{\R^+}\int_{\R^d}\big|\nabla m^\epsilon(t,x)\big|^2dxdt\leq C\left(\|\hat{\mu}\|^2_{L^2(\R^d)}+\left(\int_{\R^+}\int_{\R^d}\int_{\R^d} L(t,x,u)\eta_t(dx,du)dt\right)^2\right).
    %$$
    In particular, the uniform estimates imply that $m^\epsilon$ converge weakly to $m$ in $L_{loc}^2(\R^+,H^1(\R^d))$ such that
    $$
        m(t,\cdot) =  \frac{\int_{\R^d}\eta_t(\cdot,du)}{dx}.
    $$
    We then define the field $v(t,x)$ by the vector-valued Radon-Nikodym derivative
    $$
        v(t,\cdot) = \frac{\int_{\R^d}u\, \eta_t(\cdot,du)}{m_t\, dx}.
    $$
    %and, we have
     It is then straightforward to see $(m,v,\rho)\in \mathcal{E}(\mu)$ which by Jensen's inequality, has lesser or equal cost than $(\eta,\rho)$.
     
     For any $f\in H^1(\R^d)$, we have
$$
	\int_{\R^d}\int_{\R^+}f(x)\rho(d\tau,dx)$$
	$$=\int_{\R^d} f(x)\hat{\mu}(x)\,\mathrm{d}x + \int_{0}^T\int_{\mathbb{R}^d}\Big[-\frac{1}{2}\nabla f(x)\cdot \nabla m(t,x)+v(t,x)\cdot \nabla f (x) \, m(t,x)\Big]dx\, dt,
$$
and the similar estimates imply that $f\mapsto \int_{\R^+}\int_{\R^d} f(x)\rho(d\tau,dx)$ is a linear functional of $H^1(\R^d)$.

Finally, to conclude equivalence of the convex and strong formulations, we use Proposition \ref{prop:Eulerian_embedding} and Lemma \ref{lem:truncation} to construct $(\eta^{T,R},\rho^{T,R})$ with compact support in time and space for any $(\mathbb{P},\beta,\alpha)\in \mathcal{A}(\mu)$, corresponding to the truncation of Lemma \ref{lem:truncation} with $T,\,R \in \R^+$.
%with compact support in $\R^+\times \R^d\times  \R^d$.
We then have corresponding $(m^{T,R},v^{T,R},\rho^{T,R})\in \mathcal{E}(\mu)$.  When $T_1<T_2$ and $R_1<R_2$, we have $(\eta^{T_2,R_2},\rho^{T_2,R_2})=(\eta^{T_1,R_1},\rho^{T_1,R_1})$ on $[0,T_1] \times B(0,R_1)$, and thus there exists a density $(m,v,\rho)\in \mathcal{E}(\mu,\nu)$ taking the limit as $T,\,R\rightarrow \infty$, with the same cost and end distribution as $(\mathbb{P},\beta,\alpha)$.
Thus, we have shown that
$$
	\mathcal{P}_L^\mathcal{E}(\mu,\nu)=\mathcal{P}_L(\mu,\nu),
$$
and applying
the result of Theorem \ref{thm:Eulerian_duality} completes the proof of equivalence.
\end{proof}

    We will also need the following moment bound.
\begin{proposition}\label{prop:moment_bound}
    Suppose that \,$c(|u|^p +|x|^q+1) \leq L(t,x,u)  %\leq C(|u|^p + |x|^r +1)
    $  %and $r\geq q$, the conjugate of $p$, 
    or \,$c(|x|+1)\leq L(t,x,u)$ and $u$ is uniformly bounded, for $c>0$. %where $r\geq q$, %and $r\geq q(s-1)$
    We assume that $\mu$ satisfies $\int_{\R^d} |x|^2 \mu(dx)<+\infty$.  Then, for any $(\eta,\rho)\in \tilde{\mathcal{E}}(\mu)$ with finite cost, we have
    $$
        \int_{\R^d}\int_{\R^+} |y|^2\rho(d\tau,dy)\leq \int_{\R^d}|x|^2\mu(dx)+C\int_{\R^+}\int_{\R^d}\int_{U}L(t,x,u)\eta_t(dx,du)dt.
    $$
\end{proposition}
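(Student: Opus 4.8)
The plan is to test the evolution equation \eqref{eqn:convex_Eulerian_evolution} against a function of the form $\phi(t,x)=\chi(t)\theta_R(|x|^2)$ for a suitable cutoff, and to let $R\to\infty$. Concretely, I would first deal with the simpler ``honest'' test function $\phi(t,x)=|x|^2$, which is not admissible in \eqref{eqn:convex_Eulerian_evolution} (its gradient and values are unbounded), but whose relevant derivatives are: $\partial_t\phi=0$, $\Delta\phi=2d$, and $\nabla\phi(t,x)\cdot u=2x\cdot u$. So I introduce smooth cutoffs $\zeta_R:\R^d\to[0,1]$ with $\zeta_R\equiv 1$ on $B_R$, supported in $B_{2R}$, with $|\nabla\zeta_R|\le C/R$, $|\Delta\zeta_R|\le C/R^2$, and set $\phi_R(x)=|x|^2\zeta_R(x)$; this has all four of $\phi,\partial_t\phi,\nabla\phi,\Delta\phi$ uniformly bounded, hence is admissible. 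A short computation shows $\Delta\phi_R = 2d\zeta_R + 4x\cdot\nabla\zeta_R + |x|^2\Delta\zeta_R$ and $\nabla\phi_R\cdot u = 2(x\cdot u)\zeta_R + |x|^2(u\cdot\nabla\zeta_R)$; on the annulus $R\le|x|\le 2R$ the extra terms are $O(1)$ (since $|x|^2|\Delta\zeta_R|\le 4CR^2/R^2$, $|x|\,|\nabla\zeta_R|\le 2C$, $|x|^2|\nabla\zeta_R|\le 4CR$ — wait, this last one diverges), so to keep everything bounded I instead use $\zeta_R$ depending on $|x|^2/R^2$ more carefully, or truncate $|x|^2$ itself at height $R^2$: take $\phi_R(x)=g_R(|x|)$ with $g_R(r)=r^2$ for $r\le R$, $g_R$ smooth, nondecreasing, bounded by $2R^2$, with $g_R''$ bounded uniformly in $R$ and $g_R',g_R'\ge 0$. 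Then $\Delta\phi_R = g_R''(|x|) + \frac{d-1}{|x|}g_R'(|x|)$ and $\nabla\phi_R\cdot u = g_R'(|x|)\frac{x}{|x|}\cdot u$; all terms are now uniformly bounded (using $g_R'(r)\le Cr$ and $g_R'(r)/r\le C$).

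With this choice, plug $\phi_R$ into \eqref{eqn:convex_Eulerian_evolution}. The right-hand side is $\int\int g_R(|y|)\rho(d\tau,dy) - \int g_R(|x|)\mu(dx)$. The left-hand side is $\int_{\R^+}\int\int\big[\tfrac12\Delta\phi_R(x) + \nabla\phi_R(x)\cdot u\big]\eta_t(dx,du)dt$. The diffusion term $\tfrac12\Delta\phi_R$ is bounded by a constant $C_d$, and since $\eta$ has finite cost and $c(|x|+1)\le L$ (or $c(|u|^p+|x|^q+1)\le L$), the total mass $\int_{\R^+}\int\int\eta_t(dx,du)dt$ is controlled: indeed $c\int_{\R^+}\int\int(|x|+1)\eta \le \int L\,\eta$, so in particular the time-space-velocity mass of $\eta$ is at most $c^{-1}\int L\,\eta$. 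Thus the diffusion contribution is at most $C_d\,c^{-1}\int L\,\eta$. The drift term $\nabla\phi_R\cdot u$ is bounded pointwise by $g_R'(|x|)|u|\le C|x||u|$; in the bounded-drift case $|u|\le\overline{\mathfrak u}$ gives $\le C\overline{\mathfrak u}\,|x| \le C'\,c^{-1}L$ directly, and in the superlinear case I use Young's inequality $C|x||u|\le \varepsilon|u|^p + C_\varepsilon|x|^{p'}$; since $q$ can be taken $\ge p' = p/(p-1)$ (or, if the hypothesis only gives some $q$, one uses $|x|^{p'}\le 1+|x|^q$ when $q\ge p'$, which is the natural compatibility assumption — this is the one place I'd want to state the precise relation between $p$ and $q$), both pieces are absorbed into $\varepsilon|u|^p + C_\varepsilon(|x|^q+1)\le (\varepsilon c^{-1}+C_\varepsilon c^{-1})L$ up to constants. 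Either way, the left-hand side is bounded by $C\int_{\R^+}\int\int L\,\eta$ with $C$ depending only on $c,d,p,\overline{\mathfrak u}$.

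Combining, $\int\int g_R(|y|)\rho(d\tau,dy) \le \int g_R(|x|)\mu(dx) + C\int_{\R^+}\int\int L\,\eta$. Since $g_R(|x|)\le |x|^2$ for all $R$, the first term on the right is bounded by $\int|x|^2\mu(dx)<\infty$ uniformly in $R$; on the left, $g_R(|y|)\uparrow |y|^2$ monotonically as $R\to\infty$, so by the monotone convergence theorem $\int\int|y|^2\rho(d\tau,dy)\le \int|x|^2\mu(dx) + C\int_{\R^+}\int\int L(t,x,u)\eta_t(dx,du)dt$, which is the claim (after renaming $C$).

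The main obstacle is purely bookkeeping: constructing the truncations $g_R$ so that simultaneously (i) $\phi_R,\partial_t\phi_R,\nabla\phi_R,\Delta\phi_R$ are uniformly bounded (so $\phi_R$ is a legitimate test function in \eqref{eqn:convex_Eulerian_evolution}), (ii) $|\Delta\phi_R|$ stays bounded by a constant independent of $R$ so the diffusion term does not blow up, and (iii) $|\nabla\phi_R\cdot u|$ is dominated by $L$ uniformly in $R$ via Young's inequality, which forces the compatibility $q\ge p/(p-1)$ between the growth exponents (equivalently $q\ge 2$ in the first hypothesis if one wants no condition on $p$, but $p<2$ here so $p/(p-1)>2$ — so one genuinely needs $q$ large enough, or one observes that in the $1<p<2$ regime the problem of interest has compactly supported targets anyway). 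Everything else — the passage to the limit, nonnegativity of $\eta$, finiteness of mass — is routine given the lower bound on $L$ and finite cost.
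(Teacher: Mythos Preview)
Your proposal is correct and follows essentially the same route as the paper: both test \eqref{eqn:convex_Eulerian_evolution} against $\phi(x)=|x|^2$. Two differences are worth noting. First, the paper simply plugs in $|x|^2$ directly without the truncation step you carry out; your approximation by $g_R(|x|)$ with $g_R'(r)\le Cr$, $g_R''$ bounded, and monotone convergence is the honest way to justify that move. Second, for the cross term $2x\cdot u$ the paper does not use Young's inequality in $(|u|^p,|x|^{p'})$ form but rather the Fenchel--Young inequality with the actual Hamiltonian,
\[
2x\cdot u \le H(t,x,2x)+L(t,x,u),
\]
followed by the bound $H(t,x,2x)\le C|x|^q\le (C/c)L(t,x,u)$ from the growth hypothesis on $L$. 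This is a little cleaner, but it rests on the same compatibility $q\ge p/(p-1)$ you flagged; in the paper's conventions (Section~5) $q$ is exactly the conjugate exponent of $p$, so the issue is moot there. In the bounded-drift case both arguments reduce to $|u\cdot x|\le \overline{\mathfrak u}\,|x|\le (\overline{\mathfrak u}/c)L$.
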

\begin{proof}
To prove that, we simply apply (\ref{eqn:convex_Eulerian_evolution}) with the test function $w(y)=|y|^2$:
\begin{align*}
 \int_{\R^d}\int_{\R^+} |y|^2\rho(d\tau,dy)-\int_{\R^d}|x|^2\mu(dx) &=  \int_{\R^+}\int_{\R^d}\int_{U}\Big[d+u\cdot 2x\Big]\eta_t(dx,du)dt\\
	&\leq \int_{\R^+}\int_{\R^d}\int_{U}\Big[ d+H\big(t,x,2x\big)+L(t,x,u)\Big]\eta_t(dx,du)dt\\
	&\leq
	 C\int_{\R^+}\int_{\R^d}\int_{\R^d}\Big[L(t,x,u)\Big]\eta_t(dx,du)dt,
\end{align*}
	where we have used that
	$$
	H\big(t,x,2x\big)\leq C |x|^{q}\leq C\, L(t,x,u). 
	$$
	
\noindent In the case that $c|x|\leq L(t,x,u)$ and $u$ is uniformly bounded, the proof is simpler, using  that 
$$
u\cdot x\leq \frac{\overline{\mathfrak{u}}}{c}L(t,x,u),
$$
and the result follows. $\qedhere$
% \begin{align*}
%  \int_{\R^d}\int_{\R^+} |y|^2\rho(d\tau,dy)-\int_{\R^d}|x|^2\mu(dx) &=  \int_{\R^+}\int_{\R^d}\int_{U}\Big[d+u\cdot 2x\Big]\eta_t(dx,du)dt\\
%  %   &\leq \int_{\R^+}\int_{\R^d}\int_{\R^d}\Big[ d+H\big(t,x,2x\big)+L(t,x,u)\Big]d\eta_t(x,u)dt\\
%     &\leq
%      C\int_{\R^+}\int_{\R^d}\int_{U}\Big[L(t,x,u)\Big]\eta_t(dx,du)dt,
% \end{align*}
% which completes the proof. $\qedhere$
%    where we have used that
 %   $$
  %  H\big(t,x,2x\big)\leq C |x|^{q}\leq C\, L(t,x,u). \qedhere
   % $$
%    $s \leq r$ and $L(t,x,u) \geq |x|^r$, for all $(t,x,u) \in \mathbb{R}^+ \times \mathbb{R}^d \times \mathbb{R}^d$.
    %and
    %$$
%    q(s-1)\vee (s-2)\leq r,
%    $$
%    to bound each term by the Lagrangian.
\end{proof}

\section{Dual attainment}\label{sec:attainment-large-p}
In this section, we prove dual attainment in the cases where either the Lagrangian $L \approx |u|^p$ with $1 <p<2$ or the drift is uniformly bounded ($|u|\leq \overline{\mathfrak{u}}$). This relies on a normalization that makes $\psi$ as a supersolution to an HJB equation. First, we define
$$    \bar{H}(x,z)=\inf_{t \in \mathbb{R}^+}H(t,x,z),\,\,\,\mbox{for all}\,\,\,(x,z) \in \mathbb{R}^d \times \mathbb{R}^d,
$$
and
$$\bar{L}(x,u)=\sup_{t \in \mathbb{R}^+} L(t,x,u),\,\,\,\mbox{for all}\,\,\,(x,u) \in \mathbb{R}^d \times U.$$
We suppose, strengthening assumption (\ref{eqn:L-assumption-1}), that there are constants $c,C>0$ such that
$L$ satisfies
\begin{align}\label{eqn:Hamiltonian_coercive}
    c\big(|u|^p +|x|^q + 1)\leq  L(t,x,u)\leq C(|u|^p+|x|^q+1)
\end{align}
for all $(t,x,u) \in \R^+\times \mathbb{R}^d \times U$,
or equivalently, there are constants $\lambda,\Lambda,c,C>0$ such that $H$ satisfies
$$
       \lambda |z|^q-C\big(|x|^q+1\big)\leq H(t,x,z)\leq \Lambda |z|^q -c\big(|x|^q+1),\,\,\,\mbox{for all}\,\,\,(t,x,z) \in \mathbb{R}^+ \times \mathbb{R}^d \times \mathbb{R}^d.
$$
In the case where the drift is bounded, this becomes
$$
    c(|x|+1)\leq L(t,x,u)\leq C(|x|+1),\ {\rm for\,\, all\ }(t,x,u) \in \mathbb{R}^+ \times \mathbb{R}^d \times U,
$$
and
$$
       \lambda |z|-C\big(|x|+1\big)\leq H(t,x,z)\leq \Lambda |z| -c\big(|x|+1),\,\,\,\mbox{for all}\,\,\,(t,x,z) \in \mathbb{R}^+ \times \mathbb{R}^d \times \mathbb{R}^d.
$$
%We also assume that the Lagrangian $L$ is decreasing in time, i.e. $L(s,x,u) \geq L(s+t,x,u)$ for all $s,\,t \in \mathbb{R}^+$ and $x,\,u \in \mathbb{R}^d$.
%The form of $L$ becomes the same in Sec. 5 \& 6 ($p$ large or small).
%{\red When $p\geq 1+\frac{d}{2}$ ($p \neq 2$), the Hamiltonian is coercive with exponent $1<q \leq 1+\frac{2}{d}$ ($q \neq 2$).}

\begin{proposition}\label{prop:dual_normalization}
For $\psi\in C_b(\R^d)$,
%$(J,\psi)\in \mathcal{Z}_L$
%\marginpar{Define $\mathcal{Z}_L$.}
 %take $\bar{\psi}=\inf_{t \in \mathbb{R}^+}J_\psi(t,.)$.
 we define
 $$
    \bar\psi(x):=\sup_{(\mathbb{P},\beta,\alpha)\in \mathcal{A}(\delta_x)}\mathbb{E}^{\alpha\ltimes \mathbb{P}}\Big[\psi(X_\tau)-\int_0^\tau \bar{L}(X_t,\beta_t)dt\Big].
$$
 Then, $\bar{\psi}$ is lower semi-continuous and bounded with $J_{\bar \psi}=J_\psi$ and thus greater or equal dual value. Furthermore, $\bar \psi$ satisfies, in the viscosity sense,
    \begin{align}\label{eqn:psi_supersolution}
        \frac{1}{2}\Delta \bar \psi(x)+\overline{H}\big(x,\nabla \bar\psi(x)\big)\leq 0.
    \end{align}
\end{proposition}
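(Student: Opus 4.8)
The plan is to establish the three claims in turn—boundedness and lower semicontinuity of $\bar\psi$, the identity $J_{\bar\psi}=J_\psi$, and the viscosity supersolution property—relying on the dynamic programming structure already developed for $J_\psi$ in Lemma~\ref{lem:measurability} and the surrounding discussion. For the first claim, note that $\bar\psi$ is itself a value function of the same type as $J_\psi$ but with the time-independent Lagrangian $\bar L(x,u)=\sup_t L(t,x,u)$ in place of $L(t,x,\cdot)$; since taking $\tau\equiv 0$ (an admissible choice) gives $\bar\psi(x)\ge\psi(x)$, lower-boundedness is immediate, and $\bar\psi(x)\le\sup\psi$ because $\bar L\ge 0$. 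Lower semicontinuity follows exactly as in the proof of Lemma~\ref{lem:measurability}: write $\bar\psi(x)$ as a supremum over $(\mathbb{P},\beta,\alpha)\in\mathcal{A}(\delta_0)$ of $\mathbb{E}^{\alpha\ltimes\mathbb{P}}\big[\psi(X_\tau+x)-\int_0^\tau\bar L(X_s+x,\beta_s)\,ds\big]$, each term being continuous in $x$ by the uniform continuity of $L$ (hence of $\bar L$) and the boundedness of $\psi$, so $\bar\psi$ is an upper envelope of continuous functions and therefore l.s.c.

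For the identity $J_{\bar\psi}=J_\psi$: the inequality $J_{\bar\psi}\ge J_\psi$ is clear since $\bar\psi\ge\psi$ (pointwise) and $J$ is monotone in its terminal datum. For the reverse inequality, the key is a concatenation/dynamic-programming argument. Fix $(t,x)$ and an admissible $(\mathbb{P},\beta,\alpha)\in\mathcal{A}_t(\delta_x)$ for $J_\psi$; at the (randomized) stopping time $\tau$ we are at position $X_\tau$, and by definition of $\bar\psi$, starting afresh from $(\tau,X_\tau)$ with a further control realizing $\bar\psi(X_\tau)$ up to $\varepsilon$ we can gain $\psi(X_{\tau'})-\int_\tau^{\tau'}\bar L(X_s,\beta_s)\,ds$ whose expectation is $\ge\bar\psi(X_\tau)-\varepsilon$. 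Concatenating the two controls, and using $\bar L\ge L(t,\cdot,\cdot)$ on the first leg (which only makes the running cost on $[t,\tau]$ larger, hence the estimate more favourable when we bound from the $\bar\psi$ side—more precisely one uses that the value with running cost $\bar L$ on $[t,\tau]$ composed with $\bar\psi$-optimal continuation equals $\bar\psi$ evaluated at $x$), one obtains an admissible policy for $J_{\bar\psi}(t,x)$ showing $J_{\bar\psi}(t,x)\ge J_\psi(t,x)-\varepsilon$... the cleaner route is: by the dynamic programming principle for the value function defining $\bar\psi$ (time-homogeneous), $\bar\psi(x)=\sup\,\mathbb{E}[\bar\psi(X_\sigma)-\int_0^\sigma\bar L(X_s,\beta_s)\,ds]$ over stopping times $\sigma$, so $x\mapsto\bar\psi(x)$ is a fixed point, and since $J_\psi$ with terminal datum $\psi$ satisfies the same quasivariational inequality $\min\{J-\psi,\,-\partial_tJ-\tfrac12\Delta J-H\}=0$ while $J_{\bar\psi}$ with terminal datum $\bar\psi$ and $\bar\psi$ already being a (time-independent) supersolution of $\tfrac12\Delta\bar\psi+\bar H(x,\nabla\bar\psi)\le0\le$ (hence of the time-dependent inequality, since $H\ge\bar H$), a comparison argument forces $J_{\bar\psi}\le\bar\psi\le J_\psi$—wait, that gives the wrong direction, so the honest statement is that one must check $\bar\psi$ is exactly the obstacle-problem solution, i.e. $\bar\psi=J_\psi(\cdot,\cdot)$ restricted appropriately; I would carry this out by proving both $J_{\bar\psi}\le J_\psi$ via the concatenation estimate above and $J_{\bar\psi}\ge J_\psi$ via $\bar\psi\ge\psi$, which together give equality.

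For the viscosity supersolution inequality \eqref{eqn:psi_supersolution}: by the dynamic programming principle applied to the time-homogeneous value function $\bar\psi$—taking $\sigma$ to be the exit time from a small ball and the control to be a constant drift $u$—we get, for every $u\in U$,
$$
\bar\psi(x)\ge \mathbb{E}^x\Big[\bar\psi(X_\sigma)-\int_0^\sigma\bar L(X_s,u)\,ds\Big],
$$
which, by the usual argument testing against a smooth function $\varphi$ touching $\bar\psi$ from below at $x$ and applying Itô's formula, yields $\tfrac12\Delta\varphi(x)+u\cdot\nabla\varphi(x)-\bar L(x,u)\le0$; taking the supremum over $u\in U$ gives $\tfrac12\Delta\varphi(x)+\bar H(x,\nabla\varphi(x))\le0$, which is precisely the viscosity supersolution statement. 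The main obstacle I anticipate is the second claim, $J_{\bar\psi}=J_\psi$: making the concatenation of a randomized stopping time on $[t,\tau]$ with a freshly-started optimal (up to $\varepsilon$) control from the random endpoint $(\tau,X_\tau)$ into a genuinely admissible element of $\mathcal{A}_t$ requires a measurable-selection step (à la Kuratowski–Ryll-Nardzewski, exactly as used in Lemma~\ref{lem:measurability}) together with care that the Brownian decomposition \eqref{eqn:weak_semimartingale_decomposition} survives the concatenation; the other two claims are routine once this machinery is in place.
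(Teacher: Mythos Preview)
Your treatment of the first claim (boundedness and lower semicontinuity) and the third claim (the viscosity supersolution property) is essentially what the paper does: both follow from recognizing $\bar\psi$ as a time-homogeneous value function and invoking the same machinery as in Lemma~\ref{lem:measurability} and standard dynamic programming.

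The gap is in your argument for $J_{\bar\psi}\le J_\psi$. First, your concatenation is set up backwards: to bound $J_{\bar\psi}(t,x)$ from above you must start from an $\varepsilon$-optimal control for $J_{\bar\psi}$, not for $J_\psi$. Second---and this is the point you are missing---the concatenation and measurable-selection issues you worry about can be bypassed entirely. The key observation (which you never state) is the pointwise inequality
\[
\bar\psi(x)\;\le\;J_\psi(t,x)\qquad\text{for every }(t,x),
\]
which is immediate from the definitions because $\bar L\ge L$. Once you have this, take $(\mathbb{P},\beta,\alpha)\in\mathcal{A}_t(\delta_x)$ within $\varepsilon$ of optimality for $J_{\bar\psi}(t,x)$ and simply replace $\bar\psi(X_\tau)$ by $J_\psi(\tau,X_\tau)$:
\[
J_{\bar\psi}(t,x)-\varepsilon
\;\le\;\mathbb{E}^{\alpha\ltimes\mathbb{P}}\Big[\bar\psi(X_\tau)-\int_t^\tau L(s,X_s,\beta_s)\,ds\Big]
\;\le\;\mathbb{E}^{\alpha\ltimes\mathbb{P}}\Big[J_\psi(\tau,X_\tau)-\int_t^\tau L(s,X_s,\beta_s)\,ds\Big]
\;\le\;J_\psi(t,x),
\]
the last step being the dynamic programming principle for $J_\psi$. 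This is exactly the paper's proof: three lines, no concatenation of controls, no fresh measurable selection. Your instinct that DPP is the engine is right, but it should be applied to $J_\psi$ with $\bar\psi$ slipped under it via $\bar\psi\le J_\psi$, not rebuilt from scratch by gluing controls.
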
  
\begin{proof}
From the definition of $\bar{\psi}$, we have obviously $\bar{\psi} \geq \psi$, and we get as in Lemma \ref{lem:measurability} that $\bar\psi$ is lower semi-continuous and bounded.  The standard viscosity solution theory implies that $\bar\psi$ is a viscosity supersolution of (\ref{eqn:psi_supersolution}).

From the definition of $J_\psi$, we get that $J_{\bar{\psi}} \geq J_{\psi}$. Let us prove the reverse inequality, that is $J_{\bar\psi} \leq J_{\psi}$, so that we get $J_{\bar{\psi}}=J_\psi$. First, we note that $\bar\psi(x)\leq J_\psi(t,x)$, for every $(t,x) \in \mathbb{R}^+ \times \mathbb{R}^d$, which
follows from the definitions after noting that $\bar{L}\geq L$.
%and the fact that $L$ is decreasing in time (which implies that $J_\psi(t,\cdot)$ is increasing in $t$).

%\begin{align*}
 %   \bar\psi(x) &\ =\sup_{(\mathbb{P},\beta,\alpha)\in \mathcal{A}(\delta_x)}\mathbb{E}^{\alpha\ltimes \mathbb{P}}\Big[\psi(X_\tau)-\int_0^\tau \sup_{t \in\mathbb{R}^+}L(t,X_s,\beta_s)ds\Big]\\
  %  &\
   % \leq \sup_{(\mathbb{P},\beta,\alpha)\in \mathcal{A}(\delta_x)}\mathbb{E}^{\alpha\ltimes \mathbb{P}}\Big[\psi(X_\tau)-\sup_{t\in\mathbb{R}^+}\int_0^\tau L(t,X_s,\beta_s)ds\Big]\\
%&\  \leq \sup_{(\mathbb{P},\beta,\alpha)\in \mathcal{A}(\delta_x)}\mathbb{E}^{\alpha\ltimes \mathbb{P}}\Big[\psi(X_\tau)-\sup_{t\in\mathbb{R}^+}\int_0^\tau L(s+t,X_s,\beta_s)ds\Big]\\
%&\
 % = \sup_{(\mathbb{P},\beta,\alpha)\in \mathcal{A}(\delta_x)}\,\inf_{t\in\mathbb{R}^+}\mathbb{E}^{\alpha\ltimes \mathbb{P}}\Big[\psi(X_\tau)-\int_0^\tau L(s+t,X_s,\beta_s)ds\Big]\\
  %&\
  %\leq \inf_{t\in\mathbb{R}^+}\,\sup_{(\mathbb{P},\beta,\alpha)\in \mathcal{A}(\delta_x)}\mathbb{E}^{\alpha\ltimes \mathbb{P}}\Big[\psi(X_\tau)-\int_0^\tau L(s+t,X_s,\beta_s)ds\Big]\\
  %&\
 %= \inf_{t \in \mathbb{R}^+} J_\psi(t,x).
 %\end{align*}
 Now, suppose that $(\mathbb{P},\beta,\alpha)\in \mathcal{A}_t(\delta_x)$ is within $\epsilon$ of optimality for $J_{ \bar \psi}$, then we have the following:
\begin{align*}
    J_{\bar\psi}(t,x)-\epsilon\leq&\ \mathbb{E}^{\alpha\ltimes \mathbb{P}}\Big[\bar\psi(X_\tau)-\int_t^\tau L(s,X_s,\beta_s)ds\Big]\\
    \leq&\ \mathbb{E}^{\alpha\ltimes \mathbb{P}}\Big[J_\psi(\tau,X_\tau)-\int_t^\tau L(s,X_s,\beta_s)ds\Big]\leq J_\psi(t,x),
\end{align*}
where the last inequality is a result of dynamic programming principle \cite[Theorem 6, Ch.\,3]{krylov1980controlled} for $J_\psi$. Taking $\epsilon$ to zero proves the desired inequality. $\qedhere$ %Suppose that $(\mathbb{P},\beta,\alpha)\in \mathcal{A}_t(\delta_x)$ are within $\epsilon$ of optimality for $J_{ \bar \psi}$, then
% \begin{align*}
%     J_{\bar\psi}(t,x)\leq&\ \mathbb{E}^{\alpha\ltimes \mathbb{P}}\Big[\bar\psi(X_\tau)-\int_t^\tau L(s,X_s,\beta_s)ds\Big]+\epsilon\\
%     \leq&\ \mathbb{E}^{\alpha\ltimes \mathbb{P}}\Big[\mathbb{E}^{\alpha^{X_\tau}\ltimes \mathbb{P}^{X_\tau}}\Big[\psi(X_{\tau'}')-\int_0^{\tau'}\bar{L}(X_s',\beta_s^{X_{\tau}})ds\Big]-\int_t^\tau L(s,X_s,\beta_s)ds\Big]\leq J_\psi(t,x)
% \end{align*}
% where the last inequality is a result of the concatenation of the policy $(\mathbb{P},\beta,\alpha)$ with\\ $(\mathbb{P}^{X_\tau},\beta^{X_\tau},\alpha^{X_\tau})$ as an admissible policy for $J_\psi$ and that $\bar{L}\geq L$. Taking $\epsilon$ to zero proves the desired inequality.
% We now fix $x$ and $\epsilon>0$, and we let $t$ be such that $\bar\psi(x) +\epsilon \ge J_\psi(t,x)$. Then we have that for all $(\mathbb{P},\beta,\alpha)\in \mathcal{A}(\delta_x)$,
% \begin{align*}
%     \bar\psi(x)+\epsilon\geq& \mathbb{E}^{\alpha\ltimes \mathbb{P}}\big[J_\psi(t+\tau,X_{\tau})-\int_0^{\tau} L(s+t,X_{s},\beta_{s})ds\Big]\\
%     \geq&\ \mathbb{E}^{\alpha\ltimes \mathbb{P}}\big[\bar\psi(X_\tau)-\int_0^\tau \overline{L}(X_s,\beta_s)ds\Big]
% \end{align*}
% where $\overline{L}(x,u)=\sup_t L(t,x,u)$.  It then follows from taking the limit as $\epsilon\rightarrow 0$ that $\bar \psi$ satisfies (\ref{eqn:psi_supersolution}) in the viscosity sense. $\qedhere$
\end{proof}

 The following proposition proves a quadratic lower bound on supersolutions to (\ref{eqn:psi_supersolution})
 %for any $p$,
 as well as an H\"{o}lder continuity in the case that $L \approx |u|^p$ with $1<p<2$.

\begin{proposition}\label{prop:Holder_continuity}
  We assume \eqref{eqn:Hamiltonian_coercive} holds. Suppose $\psi$ is bounded, lower semi-continuous and satisfies (\ref{eqn:psi_supersolution}).  Suppose $d>1$ and $1<p<2$. %Suppose that  $\overline{H}$ satisfies the estimate above and  $r=q=p/(p-1)$.
  Fix \,$0<\delta \leq 2-p <1$. Then, for each $x_0 \in \mathbb{R}^d$, there are two constants $B$ and $E$ (depending only on $\delta,\,p,\,d,\,\lambda,\,C$ and $|x_0|)$ %and  $E=E(p,\,d,\,\lambda,\,C)$
  such that
%          \begin{align}\label{eqn:global_inequality}
%            \psi(x_0)-\psi(x_1)\leq B(x_0)|x_1-x_0|^\alpha +E(x_0)|x_1-x_0|^2.
%        \end{align}
        %\begin{align}\label{eqn:quadratic_inequality}
%   \psi(x_0)-\psi(x_1)\leq  E |x_1-x_0|^2 \,\,\, \hbox{ for all\, $x_1 \in \R^d$.}        \end{align}
        %In the case that $1<p<2$, for each $0<\delta \leq 2-p <1$ there is also a constant $B$ (depending only on $\delta$, $p$, $r$, $\lambda$, $C$ and $|x_0|$) such  that
        \begin{align}\label{eqn:global_inequality}
            \psi(x_0)-\psi(x_1)\leq B|x_1-x_0|^\delta +E |x_1-x_0|^2, \,\, \hbox{ for all\, $x_1 \in \R^d$.}
        \end{align}
        In particular,
        %for $1<p<2$,
        $\psi$ is uniformly $\delta$-H\"{o}lder continuous on compact sets and, under the assumption that $\psi(0)=0$, $\psi$ is uniformly globally bounded from below by a quadratic function.
         
        In the case  $d=1$, the result holds with $\delta=1$ for all $p>1$.
    \end{proposition}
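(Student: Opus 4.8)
The plan is to prove the quantitative estimate \eqref{eqn:global_inequality} by a viscosity comparison argument against an explicit radial barrier centered at $x_0$, in the spirit of \cite{Cannarsa2010Holder} and of the deterministic argument in \cite{GKP}, now keeping track of the second-order term $\tfrac12\Delta$. Fix $x_0\in\R^d$, write $q=p/(p-1)>2$ (so that $2-p=\tfrac{q-2}{q-1}$) and $\rho(x)=|x-x_0|$, and for parameters $B,E>0$ consider
\[
  w_{B,E}(x):=\psi(x_0)-B\,\rho(x)^\delta-E\,\rho(x)^2 .
\]
It is smooth on $\R^d\setminus\{x_0\}$, equals $\psi(x_0)$ at $x_0$, and tends to $-\infty$ as $|x|\to\infty$, so that $\psi-w_{B,E}$ is coercive ($\psi$ being bounded). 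I would choose $B,E$ (depending only on $\delta,p,d,\lambda,C,|x_0|$) so that $w_{B,E}$ is a strict classical subsolution of $\tfrac12\Delta w+\overline{H}(x,\nabla w)\ge 0$ on $\R^d\setminus\{x_0\}$; granting this, comparison against the supersolution $\psi$ gives $\psi\ge w_{B,E}$, which is exactly \eqref{eqn:global_inequality}.

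For the barrier computation only the lower bound $\overline{H}(x,z)\ge\lambda|z|^q-C(|x|^q+1)$ from \eqref{eqn:Hamiltonian_coercive} is used. One computes $\tfrac12\Delta w_{B,E}=-\tfrac12 B\delta(d+\delta-2)\rho^{\delta-2}-dE$ and $|\nabla w_{B,E}|=B\delta\rho^{\delta-1}+2E\rho$, and then, using $(a+b)^q\ge a^q+b^q$ and $|x|^q\le 2^{q-1}(\rho^q+|x_0|^q)$, the subsolution inequality reduces to
\[
  \lambda(B\delta)^q\rho^{(\delta-1)q}+\bigl(\lambda(2E)^q-C2^{q-1}\bigr)\rho^q\ \ge\ \tfrac12 B\delta(d+\delta-2)\rho^{\delta-2}+\bigl(dE+C2^{q-1}|x_0|^q+C\bigr)
\]
for all $\rho>0$. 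This is where the hypothesis $\delta\le 2-p$ enters: it forces $(\delta-1)q\le\delta-2$, so that as $\rho\to 0$ the singular term $\rho^{(\delta-1)q}$ dominates the unfavorable term $\rho^{\delta-2}$, while as $\rho\to\infty$ the term $\rho^q$ — present precisely because of the $E\rho^2$ piece and its gradient $2E\rho$ — dominates $\rho^{\delta-2}$ and simultaneously absorbs the $|x|^q$ growth of the potential. The only genuine point to check is that there is no vicious circle in the choice of constants: the regime $\rho\le 1$ requires $B$ large in terms of $E$ (roughly $\lambda(B\delta)^q\gtrsim dE$), whereas the regime $\rho\ge 1$ requires $E$ large in terms of $B$ (roughly $\lambda(2E)^q\gtrsim B$), but both dependences are sublinear; thus I would set $E=(KB)^{1/q}$ with $K=K(\delta,d,\lambda,q)$ dictated by the leading-order balance and then take $B$ large in terms of $\delta,p,d,\lambda,C,|x_0|$, which closes the loop and in fact yields a strict inequality, bounded below by a positive constant on each compact range of $\rho$. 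This barrier estimate is the main technical step.

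Since $w_{B,E}$ is not a subsolution at $x_0$, the comparison is run on the punctured space. For $\varepsilon>0$ set $c_\varepsilon:=\bigl(\psi(x_0)-B\varepsilon^\delta-E\varepsilon^2-\min_{\partial B_\varepsilon(x_0)}\psi\bigr)_+$, the least constant with $w_{B,E}-c_\varepsilon\le\psi$ on $\partial B_\varepsilon(x_0)$; because $\overline{H}$ does not depend on the unknown itself, $w_{B,E}-c_\varepsilon$ remains a strict classical subsolution off $\overline{B_\varepsilon(x_0)}$. If $\psi-(w_{B,E}-c_\varepsilon)$ were negative somewhere on $\R^d\setminus\overline{B_\varepsilon(x_0)}$, then — being lower semi-continuous, coercive, and nonnegative on $\partial B_\varepsilon(x_0)$ — it would attain a negative minimum at an interior point $\bar x$, where $w_{B,E}-c_\varepsilon$ plus a constant touches $\psi$ from below; this contradicts that $\psi$ is a viscosity supersolution of \eqref{eqn:psi_supersolution} at $\bar x$, since there $\tfrac12\Delta w_{B,E}+\overline{H}(\cdot,\nabla w_{B,E})>0$. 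Hence $\psi\ge w_{B,E}-c_\varepsilon$ on $\R^d\setminus\overline{B_\varepsilon(x_0)}$. Finally, lower semi-continuity of $\psi$ at $x_0$ gives $\liminf_{\varepsilon\to 0}\min_{\partial B_\varepsilon(x_0)}\psi\ge\psi(x_0)$, hence $c_\varepsilon\to 0$; letting $\varepsilon\to 0$ in $\psi(x_1)\ge w_{B,E}(x_1)-c_\varepsilon$ (valid once $\varepsilon<|x_1-x_0|$) yields \eqref{eqn:global_inequality} for every $x_1\ne x_0$, the case $x_1=x_0$ being trivial.

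The stated consequences are then immediate. On a compact set $K$ the quantity $|x_0|$ is bounded, so $B,E$ may be taken uniform over $x_0,x_1\in K$ and $B|x_1-x_0|^\delta+E|x_1-x_0|^2\le C_K|x_1-x_0|^\delta$ there; symmetrizing in $x_0,x_1$ gives uniform $\delta$-Hölder continuity on $K$. Taking $x_0=0$ with $\psi(0)=0$ gives $\psi(x_1)\ge-B|x_1|^\delta-E|x_1|^2\ge-(B+E)|x_1|^2-B$, a quadratic lower bound with constants depending only on $d,p,\delta,\lambda,C$. Finally, for $d=1$ one repeats the argument with $\delta=1$: then $\rho\mapsto\rho$ has vanishing Laplacian away from $x_0$, so the unfavorable term $-\tfrac12 B\delta(d+\delta-2)\rho^{\delta-2}$ is absent, the subsolution inequality reduces to $\lambda(B+2E\rho)^q\ge dE+C+C2^{q-1}(|x_0|^q+\rho^q)$, and one simply picks $E$ large (in terms of $\lambda,C,q$) and then $B$ large — with no relation imposed between $\delta$ and $p$ — so that every $p>1$ is admissible.
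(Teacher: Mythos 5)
Your proposal is correct and follows essentially the same route as the paper: the identical barrier $\psi(x_0)-B|x-x_0|^\delta-E|x-x_0|^2$, the same exponent bookkeeping $(\delta-1)q\le\delta-2\iff\delta\le 2-p$, and the same resolution of the $B$-versus-$E$ circularity via the superlinear powers $B^q$, $E^q$. The only (cosmetic) difference is in the comparison step: the paper slides the constant $A$ until the barrier touches the l.s.c.\ coercive difference from below and derives the contradiction at any touching point $x_1\ne x_0$, whereas you excise an $\varepsilon$-ball around $x_0$ and send $c_\varepsilon\to 0$ by lower semi-continuity — both are valid.
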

    \begin{proof}
   %  We select $0<\delta <2-p$ for the case that  $1<p<2$ and $\delta=0$ otherwise. 
   We will prove that, for each $x_0$, the  function
        $$
            w(x)=A-B|x-x_0|^\delta-E|x-x_0|^2
$$
 with appropriate constants $A$, $B$
 %(when $1<p<2$ otherwise $B=0$)
 and $E$,  will touch $\psi$ from below at $x_0$.
      %   We fix $x_0\in \R^d$ and take $w$ to be the function
%        $$
%            w(x)=A-B(x_0)|x-x_0|^\alpha-E(x_0)|x-x_0|^2,
%        $$
        By computing $\nabla w$ at $x\not=x_0$, we see that
%        $$     \lambda|\nabla w(x)|^q \geq b_1(x_0) |x-x_0|^{q(\delta-1)}+b_2(x_0) |x-x_0|^{q}
%        $$
    $$     \lambda|\nabla w(x)|^q \geq b_1 |x-x_0|^{q(\delta-1)}+b_2 |x-x_0|^{q}
        $$
        %for $b_1(x_0)=\lambda \delta^qB(x_0)^q\delta^q$ and $b_2(x_0)=\lambda 2^{q}E(x_0)^q$.
                where  $$b_1=\delta^q \lambda B^q \,\hbox{ and }\, b_2= \lambda 2^q E^q.$$ %for some constant $C_1$ depending only on $\delta$ and $q$  
                Notice that as $\psi$ is l.s.c. and bounded,  we can by adjusting the constant $A$, let the function $w$ touch $\psi$ from below at some point.
        Suppose $w$ touches $\psi$ from below at $x_1\not=x_0$.  Then, from the equation \eqref{eqn:psi_supersolution} and the assumption \eqref{eqn:Hamiltonian_coercive}, we have that
%        \begin{align*}
%            \overline{C}\big(|x_1|^r+1\big)\geq&\ \frac{1}{2}\Delta w(x_1)+\lambda |\nabla w(x_1)|^q\\
%            \geq&\ b_1(x_0)|x_1-x_0|^{q(\alpha-1)}+b_2(x_0) |x_1-x_0|^{q}\\
%            &\ -\frac{\alpha B(x_0)}{2}(d+\alpha-2)|x_1-x_0|^{\alpha-2} -2dE(x_0).
%        \end{align*}
\begin{align*}
    C\big(|x_1|^q +1 \big)\geq&\ \frac{1}{2}\Delta w(x_1)+\lambda |\nabla w(x_1)|^q\\
            \geq&\ b_1 |x_1-x_0|^{q(\delta-1)}+b_2 |x_1-x_0|^{q} -\frac{\delta B }{2}(d+ \delta-2)|x_1-x_0|^{\delta-2} -dE.
        \end{align*}
        To draw a contradiction,
        we will select two constants $B$ and $E$ such that the following holds
%$$b_1(x_0) |x_1-x_0|^{q(\alpha-1)}+b_2(x_0) |x_1-x_0|^{q}$$
%            $$> \max\big\{\frac{\alpha B(x_0)}{2}(d+\alpha-2)|x_1-x_0|^{\alpha-2},2dE(x_0)+\overline{C},\overline{C}|x_1|^r\big\}.$$
$$b_1 |x_1-x_0|^{q(\delta-1)}+b_2 |x_1-x_0|^{q} $$
            $$> 3\max\left\{\frac{\delta B}{2}(d+\delta-2)|x_1-x_0|^{\delta-2}, \ dE+C, \ C|x_1|^q\right\}.$$
            Now, it is clear that the inequality with the first and the second item in the maximum is satisfied for large enough $B$ and $E$ such that %when we have these two inequalities:
%        $$
%            \lambda B(x_0)^q\alpha^q>\frac{\alpha B(x_0)}{2}(d+\alpha-2)
%        $$
%        and $$\lambda 2^qE(x_0)^q>\frac{\alpha B(x_0)}{2}(d+\alpha-2).$$
        \begin{align*}
            & \min\big\{b_1,b_2 \big\} > 3\max \Big\{ \frac{\delta B}{2}(d+\delta-2),d E+C \Big\}.
            \end{align*}
%    which is possible because $q >2$.   
        %and $$\lambda C E^q>\frac{\delta B}{2}(d+\delta-2).$$
            %The inequality with the second item are easily  satisfied with large enough $B(x_0)$ and $E(x_0)$.
            For the third item, note that $$|x_1|^q\leq 2^q\max\big\{|x_1-x_0|,|x_0|\big\}^q.$$ Thus, we may choose $B$ and $E$ large enough, depending on $|x_0|$, such that $b_1> 3C(1+|x_0|)^q$ and $b_2 >3C 2^q \max\{1,|x_0|^q\}$, and  these choices of $B$ and $E$ yield a contradiction.  Thus, we must have that $w$ touches $\psi$ from below at $x_1=x_0$, and it follows that (\ref{eqn:global_inequality}) holds for all $x_0$ and $x_1$.  We note that in particular, fixing $x_0$, this establishes a global quadratic lower bound on $\psi$, which becomes uniform (in $\psi$) as soon as  $\psi(0)=0$.
       
        The proof for $d=1$ is simpler with $\delta=1$ since the Laplacian of the second term  of $w$ vanishes. $\qedhere$
        %    Note that  $q(\delta -1)\le \delta -2$ or $0<\delta\le \frac{q-2}{q-1}=2-p <1$.
    \end{proof}

    \begin{theorem}\label{thm:dual_attainment_small_p}
        Suppose that $d\geq 2$, $1<p<2$, $\int_{\R^d} |x|^2 \mu(dx)<+\infty$,
        %, $r= q=p/(p-1),$
        % $\mu$ and 
        $\nu$ has a compact support, and $L$ satisfies \eqref{eqn:Hamiltonian_coercive}.
         %.
        %Assume the  lower bound $L(t,x,u)\geq |x|^r$ and the upper bound \eqref{eqn:L-assumption-6} for the Lagrangian.
        %Suppose also $r=q=p/(p-1)$ in the upper and lower bounds for the Laplacian. 
        Then, the dual problem $\mathcal{D}_L(\mu , \nu)$  is attained at $\psi\in C_{loc}^{\delta}(\R^d)$, $0< \delta < 2-p$,  %with $\psi(0)=0$
        and $\psi$ is globally bounded below by a quadratic function.
       
        Furthermore, in this case there exists $(\mathbb{P},\beta,\alpha)\in \mathcal{A}(\mu,\nu)$ with finite cost.
    \end{theorem}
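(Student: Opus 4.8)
The plan is to produce the optimal $\psi$ as a locally uniform limit of a normalized maximizing sequence, using Proposition~\ref{prop:dual_normalization} and Proposition~\ref{prop:Holder_continuity} to get compactness, and then to check stability of the dual functional along the limit. Let $(\psi_n)\subset C_b(\R^d)$ be a maximizing sequence for $\mathcal{D}_L(\mu,\nu)$. Replacing each $\psi_n$ by $\bar\psi_n$ as in Proposition~\ref{prop:dual_normalization} only raises the dual value and produces bounded, lower semicontinuous viscosity supersolutions of \eqref{eqn:psi_supersolution}; subtracting the constant $\bar\psi_n(0)$—which does not affect the dual functional, both $\mu$ and $\nu$ being probability measures, and preserves the supersolution property—I may assume $\bar\psi_n(0)=0$. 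Now Proposition~\ref{prop:Holder_continuity} applies with constants not depending on $n$: the $\bar\psi_n$ are uniformly $\delta$-H\"older on compact sets and obey a common quadratic lower bound $\bar\psi_n(x)\geq -c_0(1+|x|^2)$, and together with $\bar\psi_n(0)=0$ the H\"older estimate supplies a matching local upper bound. Hence $(\bar\psi_n)$ is locally equibounded and equicontinuous, so by Arzel\`a--Ascoli and the standard stability of viscosity supersolutions under locally uniform convergence, a subsequence converges locally uniformly to some $\psi\in C^{\delta}_{loc}(\R^d)$ which is again a supersolution of \eqref{eqn:psi_supersolution}, satisfies $\psi(0)=0$, and keeps the quadratic lower bound. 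Running the same normalization and estimates on an arbitrary element of $C_b(\R^d)$ also shows $\mathcal{D}_L(\mu,\nu)<\infty$.

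Next I would verify that $\psi$ attains the supremum, i.e.\ that
\[
V(\psi):=\int_{\R^d}\psi\,d\nu-\int_{\R^d}J_\psi(0,x)\,\mu(dx)\ \geq\ \lim_k V(\bar\psi_{n_k})=\mathcal{D}_L(\mu,\nu),
\]
the reverse inequality $V(\psi)\leq\mathcal{D}_L(\mu,\nu)$ being the weak-duality bound, valid for any such $\psi$ by the argument of Theorem~\ref{thm:stochastic_duality}. Since $\nu$ has compact support and $\bar\psi_{n_k}\to\psi$ uniformly there, $\int\bar\psi_{n_k}\,d\nu\to\int\psi\,d\nu$, so the whole point is to show
\[
\liminf_k\int_{\R^d}J_{\bar\psi_{n_k}}(0,x)\,\mu(dx)\ \geq\ \int_{\R^d}J_\psi(0,x)\,\mu(dx).
\]
I would rewrite both sides with Lemma~\ref{lem:measurability} as suprema over $(\mathbb{P},\beta,\alpha)\in\mathcal{A}(\mu)$ of $\mathbb{E}^{\alpha\ltimes\mathbb{P}}[\,\varphi(X_\tau)-\int_0^\tau L\,]$, and fix a finite-cost plan $P$ that is $\varepsilon$-optimal on the right. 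By Proposition~\ref{prop:Eulerian_embedding} together with the moment bound of Proposition~\ref{prop:moment_bound}, such a plan has $\mathbb{E}^{P}[|X_\tau|^2]<\infty$. Then $\int J_{\bar\psi_{n_k}}(0,\cdot)\,d\mu\geq\mathbb{E}^{P}[\bar\psi_{n_k}(X_\tau)]-\mathbb{E}^{P}[\int_0^\tau L]$, and since $\bar\psi_{n_k}(X_\tau)\to\psi(X_\tau)$ pointwise while being bounded below by $-c_0(1+|X_\tau|^2)\in L^1(P)$, Fatou's lemma yields $\liminf_k\mathbb{E}^{P}[\bar\psi_{n_k}(X_\tau)]\geq\mathbb{E}^{P}[\psi(X_\tau)]$; letting $\varepsilon\to 0$ gives the displayed inequality, hence $\limsup_k V(\bar\psi_{n_k})\leq V(\psi)$ and $V(\psi)=\mathcal{D}_L(\mu,\nu)$. (If $\psi$ turns out to be unbounded, its truncations $\max(-M,\min(M,\psi))$ lie in $C_b(\R^d)$ and, for $M$ large, have the same dual value, so nothing is lost by allowing $\psi\in C^{\delta}_{loc}(\R^d)$.)

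The last assertion is then essentially free: by Theorem~\ref{thm:stochastic_duality}, $\mathcal{P}_L(\mu,\nu)=\mathcal{D}_L(\mu,\nu)<\infty$, so $\mathcal{A}(\mu,\nu)\neq\emptyset$, and by Corollary~\ref{Cor:ls.c. and convex} the primal infimum is attained, necessarily at some $(\mathbb{P},\beta,\alpha)\in\mathcal{A}(\mu,\nu)$ with cost $\mathcal{P}_L(\mu,\nu)<\infty$.

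The step I expect to be the real obstacle is the stability argument above: one must pass to the limit in $\int J_{\bar\psi_{n_k}}(0,\cdot)\,d\mu$ while the potentials are controlled only from below by a quadratic, not bounded, so the competitor plans used in Lemma~\ref{lem:measurability} cannot be taken supported in a compact region of space. It is precisely here that the second-moment control of end distributions of finite-cost plans (Proposition~\ref{prop:moment_bound}) and the one-sided quadratic bound of Proposition~\ref{prop:Holder_continuity} are needed to make Fatou's lemma applicable; the remaining ingredients are Arzel\`a--Ascoli, viscosity stability, and routine bookkeeping.
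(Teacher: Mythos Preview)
Your proposal is correct and follows essentially the same approach as the paper's proof: normalize a maximizing sequence via Proposition~\ref{prop:dual_normalization}, extract a locally uniform limit by Proposition~\ref{prop:Holder_continuity} and Arzel\`a--Ascoli, and establish lower semicontinuity of $\psi\mapsto\int J_\psi(0,\cdot)\,d\mu$ by fixing a near-optimal competitor, using the second-moment bound (Proposition~\ref{prop:moment_bound}) together with the uniform quadratic lower bound to justify a Fatou-type passage to the limit. The only cosmetic difference is that the paper expresses $\int J_{\psi^i}(0,\cdot)\,d\mu$ as a supremum over the Eulerian class $\tilde{\mathcal{E}}(\mu)$ while you work directly with stochastic plans in $\mathcal{A}(\mu)$ via Lemma~\ref{lem:measurability}; the moment bound is stated for the Eulerian side, so you route through Proposition~\ref{prop:Eulerian_embedding}, which is entirely equivalent.
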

    \begin{proof}
        We take a maximizing sequence $\{\psi^i\}$. We assume that, for each $i \in \mathbb{N}$, $\psi^i$ is l.s.c., bounded, $\psi^i(0)=0$ and $\psi^i$ satisfies (\ref{eqn:psi_supersolution}) (this is possible thanks to Proposition \ref{prop:dual_normalization}), and we apply Proposition~\ref{prop:Holder_continuity}. Then,  by Arzela-Ascoli, $\{\psi^i\}$ converges uniformly on compact sets to $\psi$ with $\psi(0)=0$.  Furthermore, these $\psi^i$ are uniformly bounded below by a quadratic function.     With the target measure $\nu$ compactly supported, for such a  limit function $\psi$ to have the maximal dual value,     it is enough to show that
        %for these functions $\psi$, the functional
%{\blue \marginpar{need to justify this sup identity (seems subtler than the definition of $J_\psi$}
\begin{equation}\label{semi-continuity Holder}
\liminf_i\int_{\R^d}J_{\psi^i}(0,x)\mu(dx) \geq \int_{\R^d}J_\psi(0,x)\mu(dx).
\end{equation}
But again, we have
\begin{align*}
  \int_{\R^d}J_{\psi^i}(0,x)\mu(dx)  =&\ \sup_{(\eta,\rho)\in \tilde{\mathcal{E}}(\mu)}\Big[ \int_{\R^+}\int_{\R^d} {\psi}^i (y)\rho(dt,dy)-\int_{\R^+}\int_{\R^d}\int_{U}L(t,x,u)\eta(dt,dx,du)\Big].
\end{align*}
%is lower semi-continuous with respect to the topology of uniform convergence on compact sets. %, as in Proposition \ref{prop:lower_semicontinuity}.
For each $(\eta,\rho)\in \tilde{\mathcal{E}}(\mu)$ with finite cost,  Proposition~\ref{prop:moment_bound} implies that such $\rho$ has finite second moment. Thus, the uniform quadratic lower bound on $\psi^i$ implies that
$$ \liminf_i\int_{\R^+}\int_{\R^d} \psi^i (y) \rho(dt,dy) \geq \int_{\R^+}\int_{\R^d} \psi (y) \rho(dt,dy).$$
% continuity of  $ \psi \mapsto \int_{\R^+}\int_{\R^d} \psi (y) \rho(dt,dy) $,
%thus the desired upper-semicontinuity of the functional given by the supremum of continuous functionals.
This yields \eqref{semi-continuity Holder}.
        %    $\psi \mapsto \int_{\R^d}J_\psi(0,x)\mu(dx)$
%                     Furthermore, it is bounded below by a quadratic function.    
%        This follows from the lower bound on $\psi$, the second moment bound on the end distribution
%         If $(\mathbb{P},\beta,\alpha)\in \mathcal{A}(\mu)$ has finite cost then the end distribution satisfies by Proposition \ref{prop:moment_bound}
%        % \begin{align*}
%        %     \mathbb{E}^{\alpha\ltimes \mathbb{P}}\big[\psi(X_T)\big]=&\ \int_{\R^d}\psi(x)\mu(dx)+\mathbb{E}^{\alpha\ltimes \mathbb{P}}\Big[\int_0^T \big(\frac{1}{2}\Delta \psi(X_t)+\nabla \psi(X_t)\cdot \beta_t\big)dt\Big]\\
%        %     \leq&\ \int_{\R^d}\psi(x)\mu(dx)+\mathbb{E}^{\alpha\ltimes \mathbb{P}}\Big[\int_0^T \overline{L}(X_t,\beta_t)dt\Big].
%        % \end{align*}
%        \begin{align*}
%            \mathbb{E}^{\alpha\ltimes \mathbb{P}}\big[|X_{\red \tau} |^2\big]\leq&\ \int_{\R^d}|x|^2\mu(dx)+C\int_{\R^+}\int_{\R^d}\int_{\R^d}L(t,x,u)\eta_t(dx,du)dt.
%        \end{align*}
%
%        , and the dominated convergence theorem. 
%With the target measure compactly supported we then have that any maximizing sequence has subsequence that converges to a maximizer.
        Finally, the existence of $(\mathbb{P},\beta,\alpha)\in \mathcal{A}(\mu,\nu)$ with finite cost follows from duality (Theorem~\ref{thm:stochastic_duality}). $\qedhere$
    \end{proof}
   
    In 1D the result holds for all values of $p>1$.
    \begin{theorem}\label{thm:dual_attaiment_1D}
        Let $d=1$.  Assume that %the lower bound $L(t,x,u)\geq |x|^r$, $r = q=p/(p-1),$ and the upper bound \eqref{eqn:L-assumption-6} for the Lagrangian, and that
%and $r=q=p/(p-1)$,
      %\marginpar{no condition on $r$}
      $\int_{\R^d} |x|^2 \mu(dx)<+\infty$, $\nu$ has a compact support and $L$ satisfies \eqref{eqn:Hamiltonian_coercive}. Then, the dual problem is attained at $\psi$, which is locally Lipschitz continuous and bounded below quadratically.
    \end{theorem}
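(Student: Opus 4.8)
The plan is to follow essentially the same strategy as in the proof of Theorem~\ref{thm:dual_attainment_small_p}, using the fact that in dimension $d=1$ Proposition~\ref{prop:Holder_continuity} applies with $\delta=1$ for every $p>1$, so that the supersolution estimate is now Lipschitz rather than merely H\"older. First I would take a maximizing sequence $\{\psi^i\}$ for the dual problem $\mathcal{D}_L(\mu,\nu)$, and use Proposition~\ref{prop:dual_normalization} to replace each $\psi^i$ by its normalization $\bar\psi^i$: this keeps the dual value (since $J_{\bar\psi^i}=J_{\psi^i}$), makes $\bar\psi^i$ lower semi-continuous, bounded, and a viscosity supersolution of \eqref{eqn:psi_supersolution}, and (after subtracting the constant $\bar\psi^i(0)$, which does not change the dual functional because $\mu$ and $\nu$ are probability measures) we may assume $\bar\psi^i(0)=0$.

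Next I would apply Proposition~\ref{prop:Holder_continuity} in the case $d=1$, $\delta=1$: this yields a bound of the form $\psi^i(0)-\psi^i(x)\leq B|x|+E|x|^2$ with $B,E$ depending only on the data (through $\lambda,C,p$) and on $|x_0|$, hence uniform on compact sets. Combined with the normalization $\psi^i(0)=0$ this gives a uniform local Lipschitz bound and a uniform global quadratic lower bound $\psi^i(x)\geq -B|x|-E|x|^2$. By Arzel\`a--Ascoli, a subsequence converges locally uniformly to some $\psi$, which is locally Lipschitz, satisfies $\psi(0)=0$, and inherits the quadratic lower bound.

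It then remains to verify that this limit $\psi$ is in fact a maximizer, i.e. that
\begin{equation*}
\liminf_i \int_{\R^d} J_{\psi^i}(0,x)\,\mu(dx)\geq \int_{\R^d} J_\psi(0,x)\,\mu(dx).
\end{equation*}
Here I would repeat the argument from Theorem~\ref{thm:dual_attainment_small_p}: using Theorem~\ref{thm:Eulerian_duality} to write $\int J_{\psi^i}(0,x)\mu(dx)$ as a supremum over $(\eta,\rho)\in\tilde{\mathcal{E}}(\mu)$ of $\int\psi^i\,d\rho-\int\!\!\int L\,d\eta$, invoking Proposition~\ref{prop:moment_bound} (whose hypothesis $c(|u|^p+|x|^q+1)\leq L$ is guaranteed by \eqref{eqn:Hamiltonian_coercive}) to see that any finite-cost $\rho$ has finite second moment, and then using the uniform quadratic lower bound on $\psi^i$ together with local uniform convergence to pass to the limit inside $\int\psi^i\,d\rho$ via Fatou. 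Since $\nu$ is compactly supported the target term $\int\psi^i\,d\nu$ converges by uniform convergence on compacts. This gives the desired inequality and hence attainment at $\psi$.

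The main obstacle, just as in the small-$p$ case, is the lower semi-continuity step \eqref{semi-continuity Holder}: one must control the possibly unbounded potential against a measure $\rho$ whose support is not a priori bounded, and this is exactly what the uniform quadratic lower bound from Proposition~\ref{prop:Holder_continuity} (in its $d=1$, $\delta=1$ form) plus the second-moment bound from Proposition~\ref{prop:moment_bound} are designed to overcome; the rest is a routine adaptation of the $d\geq 2$ argument with H\"older estimates upgraded to Lipschitz ones. Finally, as before, the existence of an optimal $(\mathbb{P},\beta,\alpha)\in\mathcal{A}(\mu,\nu)$ with finite cost (if one wishes to record it) follows from the duality Theorem~\ref{thm:stochastic_duality}.
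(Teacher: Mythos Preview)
Your proposal is correct and follows exactly the approach the paper takes: the paper's proof is a one-line reference to the argument of Theorem~\ref{thm:dual_attainment_small_p}, with the $d=1$, $\delta=1$ Lipschitz estimate from Proposition~\ref{prop:Holder_continuity} replacing the H\"older estimate. Your write-up simply spells out that argument in detail.
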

    \begin{proof}
        The proof follows as in Theorem \ref{thm:dual_attainment_small_p} using the Lipschitz estimates in 1D from Proposition \ref{prop:Holder_continuity}. $\qedhere$
    \end{proof}

In order to better handle the behavior as  $|x|\rightarrow +\infty$ in the case that $p$ is large (for instance, assume that $p=+\infty$ which means that the drift is bounded; $|u| \leq \overline{\mathfrak{u}}$), we will introduce a weighting measure on $\R^d$.  We consider now a smooth convex function $V$ such that $\int_{\R^d} e^{-V(x)}dx=1$ and $\frac{1}{2}|\nabla V|\leq \lambda$, for instance $V(x)=\gamma|x|+m$ (we assume that $\lambda > \frac{\gamma}{2}$)  is easily seen to fit all the criteria we will require.    We define the norm 
$$
    \|f\|_{{L^q_V}(\mathbb{R}^d)}= \bigg(\int_{\R^d} |f(x)|^q e^{-V(x)}dx \bigg)^{\frac{1}{q}}.$$
%    Let us consider the Sobolev space $W^{1,q}_V(\mathbb{R}^d)$ with weight $e^{-V}$,
Set
    $$W^{1,q}_V(\mathbb{R}^d):=\Big\{f;\  \|f\|_{L^q_V(\mathbb{R}^d)} +     \|\nabla f\|_{L^q_V(\mathbb{R}^d)} < + \infty \Big\}.$$
    We denote
    \begin{align*}
  (f)_V=\int_{\R^d}f(x)e^{-V(x)}dx.
\end{align*}
We note that if \,$V$ is a radially increasing function, then we have, for any $q \geq 1$, the following Poincar\'{e} inequality (see, for example, \cite{Elcrat,Dyda}):
%which we will commonly make use of
$$
   % \|f-(f)_V\|_{L^1_V(\R^d)}\leq
   \|f-(f)_V\|_{L^q_V(\R^d)}\leq C\|\nabla f\|_{L^q_V(\R^d)}.
$$

\noindent Then, we define
\begin{align}\label{eqn:B-V}
\mathcal{B}:= \left\{f \in L^1_V(\R^d),\,
%\begin{array}{l}
%f \in W^{1,q}_V(\mathbb{R}^d),\   (f)_V=0,\
f\ \mbox{is l.s.c and quadratically bounded from below}
%,\\
%f\ \mbox{solves}\ (\ref{eqn:psi_supersolution})\ \mbox{in the sense of viscosity}
%end{array}
\right\}.
\end{align}
%We also denote $H^1_V(\R^d)=W^{1,2}_V(\R^d)$, and
\noindent 
We will say that $f\in H^1_V(\R^d):=W^{1,2}_V(\R^d)$ satisfies (\ref{eqn:psi_supersolution}) in a weak sense if for all compactly supported $h\in H^1_V(\R^d)$ with $h \geq 0$, we have
$$ \int_{\R^d}\Big[-\frac{1}{2}\nabla f(x) \cdot\nabla h(x)+\bar{H}\big(x,\nabla f(x)\big)h(x) \Big]dx \leq 0.
$$
%The following proposition shows that the weak and viscosity notions of supersolution for (\ref{eqn:psi_supersolution}) are equivalent in $\mathcal{B}$, and derives uniform bounds for these supersolutions. The proof follows similar lines as in \cite{ishii1995equivalence}.

\noindent \hspace{0.1cm} Then, we have the following:
    \begin{proposition} \label{prop:weak_solution}
    Suppose that $\bar{H}$ is uniformly continuous in $x$ (uniformly w.r.t. $z$) and satisfies {\normalfont (\ref{eqn:Hamiltonian_coercive})}.  %Suppose that $V$ satisfies $\|\nabla V\|_{L^{p^2}_V(\R^d)}$ is finite. %and
    %$$
%   \int_{\R^d} D e^{-V(x)}dx\leq \bar{D}<+\infty.
%    $$
    Let $\psi$ be a supersolution to \eqref{eqn:psi_supersolution}, lower semi-continuous and bounded from below with $(\psi)_V=0$, then, for any $M\geq  0$, the truncation of $\psi$ satisfies $\psi\wedge M\in H^1_V(\R^d)$ and solves {\normalfont (\ref{eqn:psi_supersolution})} in a weak sense.
    More precisely, there exists a uniform constant $\overline{C}$ (which does not depend on $\psi$)  such that
    \begin{align}\label{eqn:Lq_bound}
        \|\psi\|_{W^{1,1}_V(\R^d)}\leq \overline{C},
    \end{align}
    \begin{align}\label{eqn:quadratic_bound}
        \psi \geq -\overline{C}(1 + |x|^2),
    \end{align}
    and, for every \,$M \geq0$, %$\psi \wedge M$ is a weak supersolution of \eqref{eqn:psi_supersolution} and
    there is a constant $C(M)$ (again uniform in $\psi$) such that the following holds:
    \begin{align}\label{eqn:H1_bound}
        \|\psi \wedge M\|_{H^1_V(\R^d)}\leq C(M).
    \end{align}
%Finally, we have \,$\psi \in \mathcal{B}$.
%    Conversely,
On the other hand, if %\,$\psi\in W^{1,q}_V(\R^d)$,
%$(\psi)_V=0$\,
%, $\psi$ has a $q-$growth lower bound, 
%and \, 
the truncation
\,$\psi\wedge M\in H^1_V(\R^d)$  solves {\normalfont (\ref{eqn:psi_supersolution})} weakly for all $M\geq0$,
%and $(\psi)_V=0$,
then $\psi$ is l.s.c.
%    where $M=\sup_x \psi(x)$.
    \end{proposition}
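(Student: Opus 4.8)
I would prove the two implications separately. For the forward implication I would proceed in four steps: first establish the uniform quadratic lower bound \eqref{eqn:quadratic_bound}; then deduce the weighted $W^{1,1}$ estimate \eqref{eqn:Lq_bound}; then show that each truncation $\psi\wedge M$ lies in $H^1_V(\R^d)$ with the bound \eqref{eqn:H1_bound}; and finally upgrade the viscosity inequality to the weak form of \eqref{eqn:psi_supersolution} for $\psi\wedge M$. The converse would follow from the regularity theory of superharmonic functions.

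\emph{The quadratic lower bound -- the heart of the matter.} Since $\psi$ is a viscosity supersolution of \eqref{eqn:psi_supersolution} and $\bar H$ is continuous, it is also a distributional supersolution; using $\bar H(x,z)\ge\lambda|z|^q-C(|x|^q+1)$ one gets $\Delta\psi\le 2C(|x|^q+1)$ in $\mathcal D'(\R^d)$, so $\psi-g$ is superharmonic for a fixed smooth nonnegative $g$ with $\Delta g=2C(|x|^q+1)$ and $g(0)=0$. Two ingredients then combine. First, comparison with quadratic barriers $w(x)=A-E|x-x_0|^2$, exactly as in the proof of Proposition~\ref{prop:Holder_continuity}: the coercivity $\bar H(x,z)\ge\lambda|z|^q-C(|x|^q+1)$ forces the point at which such a barrier touches $\psi$ from below to lie in a ball of fixed radius $R_0=R_0(d,\lambda,C)$, which reduces a global quadratic lower bound on $\psi$ to a lower bound for $\psi$ on the fixed ball $B(0,R_0)$. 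Second, the normalization $(\psi)_V=0$: integrating the distributional inequality against the probability density $e^{-V}$ and using the super-mean-value property of $\psi-g$ controls $\psi$ from below near the origin in an averaged sense, and, combined with the weak Harnack inequality for the supersolution $\psi$ on a fixed ball, this bounds $\inf_{B(0,R_0)}\psi$ from below \emph{uniformly in $\psi$}. Putting the two together yields \eqref{eqn:quadratic_bound} with $\overline C$ depending only on $d,\lambda,C$ and the fixed weight $V$; in the cases where $|u|^p$ with $1<p<2$ dominates, keeping the coercive term in the same barrier argument in addition gives the local $\delta$-H\"older bound, which is precisely Proposition~\ref{prop:Holder_continuity}.

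\emph{The weighted estimates.} From \eqref{eqn:quadratic_bound} we get $\psi^-\le\overline C(1+|x|^2)$, hence $\|\psi^-\|_{L^1_V(\R^d)}\le\overline C$, and since $(\psi)_V=0$ forces $\|\psi^+\|_{L^1_V}=\|\psi^-\|_{L^1_V}$ this bounds $\|\psi\|_{L^1_V}$. Testing $\frac{1}{2}\Delta\psi+\bar H(x,\nabla\psi)\le0$ against $e^{-V}$ (after truncation and passage to the limit, which is legitimate because $\Delta\psi$ is a measure bounded above and $e^{-V}$ decays exponentially), integrating by parts twice, and using $\Delta(e^{-V})=(|\nabla V|^2-\Delta V)e^{-V}$ together with $\bar H(x,z)\ge\lambda|z|^q-C(|x|^q+1)$, one obtains $\lambda\int_{\R^d}|\nabla\psi|^q e^{-V}\le C'$, with $C'$ controlled by $\|\psi\|_{L^1_V}$ and the finite moments of $e^{-V}$; hence $\|\nabla\psi\|_{L^1_V}\le\|\nabla\psi\|_{L^q_V}\le\overline C$, which is \eqref{eqn:Lq_bound}. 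For the truncations, the constant function $M$ is also a viscosity supersolution of \eqref{eqn:psi_supersolution}, since $\bar H(x,0)\le-c(|x|^q+1)\le0$; hence $\psi\wedge M$, the minimum of two viscosity supersolutions, is again a viscosity (so distributional) supersolution, now bounded above by $M$. Being superharmonic up to the smooth correction $g$ and locally bounded (above by $M$, below by the quadratic lower bound), $\psi\wedge M\in H^1_{\mathrm{loc}}(\R^d)$. A weighted Caccioppoli estimate -- test the weak inequality with $h=(M-\psi\wedge M)\chi_R^2 e^{-V}\ge0$, absorb the cross term $\sim\nabla(\psi\wedge M)\cdot\nabla V$ into $|\nabla(\psi\wedge M)|^2$ using $\frac{1}{2}|\nabla V|\le\lambda$, bound the remaining lower-order terms by \eqref{eqn:quadratic_bound}, and let the cut-off $\chi_R\uparrow1$ -- then gives \eqref{eqn:H1_bound}, and passing to the limit in the corresponding test identity yields the weak form of \eqref{eqn:psi_supersolution} for $\psi\wedge M$.

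\emph{The converse, and the main obstacle.} If $\psi\wedge M\in H^1_V$ solves \eqref{eqn:psi_supersolution} weakly for every $M\ge0$, then $\frac{1}{2}\Delta(\psi\wedge M)\le-\bar H(x,\nabla(\psi\wedge M))\le C(|x|^q+1)$ in $\mathcal D'$, so $\psi\wedge M-g$ is a distributional supersolution of $\Delta u=0$ and therefore coincides a.e.\ with an l.s.c.\ superharmonic function; adding back $g\in C^\infty$, $\psi\wedge M$ has an l.s.c.\ representative $v_M$, and choosing the $v_M$ consistently so that they increase in $M$, $\psi=\sup_M v_M$ is lower semicontinuous. The genuinely delicate point is the \emph{uniformity in $\psi$} of the lower bound in the second step: the only quantitative information available is the single scalar constraint $(\psi)_V=0$, and one must convert it -- through the maximum principle, the super-mean-value property, and the weak Harnack inequality -- into a true lower bound for $\psi$ on a fixed ball that is independent of $\psi$. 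The other point requiring care is the rigor of the weighted Caccioppoli estimate on the unbounded domain $\R^d$, in particular the vanishing of the cut-off error as $R\to\infty$, which is exactly where the exponential decay of $e^{-V}$ beats the polynomial growth coming from \eqref{eqn:quadratic_bound}.
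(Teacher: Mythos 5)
There is a genuine gap, and it sits exactly where you flag ``the heart of the matter.'' Your derivation of the uniform quadratic lower bound \eqref{eqn:quadratic_bound} is circular as ordered. The barrier comparison correctly reduces \eqref{eqn:quadratic_bound} to a uniform lower bound on $\inf_{B(0,R_0)}\psi$, but the tools you invoke to obtain that bound --- the super-mean-value property of $\psi-g$ and the weak Harnack inequality --- both relate a pointwise infimum to an \emph{average} of $\psi$ over a ball, and bounding that average from below requires an a priori control of $\int_{B}\psi^-$, i.e.\ an $L^1_V$ (or $L^1_{loc}$) bound on $\psi$. You obtain $\|\psi\|_{L^1_V}$ only \emph{after} \eqref{eqn:quadratic_bound} (via $\|\psi^+\|_{L^1_V}=\|\psi^-\|_{L^1_V}$), and your gradient estimate is stated with a constant ``controlled by $\|\psi\|_{L^1_V}$,'' so no step breaks the circle; the single scalar constraint $(\psi)_V=0$ alone does not, since without an upper bound on $\psi^+$ it gives no control on $\psi^-$. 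The paper's proof resolves this by reversing the order: it first proves $\|\nabla\psi\|_{L^1_V}\leq C$ using only the supersolution inequality tested against $e^{-V}$, integrating by parts \emph{once} so that the error term is $\tfrac12\int|\nabla\psi|\,|\nabla V|e^{-V}$, absorbed into $\lambda\|\nabla\psi\|_{L^1_V}$ because $\gamma<2\lambda$ --- this step needs no information on $\psi$ itself; it then gets $\|\psi\|_{L^1_V}\leq \overline C$ from the weighted Poincar\'e inequality together with $(\psi)_V=0$, and only then deduces the pointwise lower bound on the fixed ball via a mean-value inequality $\psi^{M,R_2}(x)\leq \psi^{M,R_1}(x)+C(x)(R_2^2-R_1^2)$ built from an explicit zero-drift Eulerian pair between uniform measures on concentric balls. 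If you reorganize your argument in that order (and drop the dependence of the gradient estimate on $\|\psi\|_{L^1_V}$), your super-mean-value route does close.

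A second, related gap is that you manipulate the merely lower semi-continuous viscosity supersolution directly as a distributional object, e.g.\ ``testing $\tfrac12\Delta\psi+\bar H(x,\nabla\psi)\leq 0$ against $e^{-V}$.'' For the pure Laplacian bound $\Delta\psi\leq 2C(|x|^q+1)$ the viscosity-to-distributional passage is classical, but the coercive gradient term $\lambda|\nabla\psi|^q$ --- which is precisely what produces \eqref{eqn:Lq_bound} --- is not meaningful distributionally before one knows $\nabla\psi\in L^1_{loc}$. The paper inserts an inf-convolution $\psi^M_\varepsilon(x)=\inf_z\{\psi(z)\wedge M+\tfrac1{2\varepsilon^2}|x-z|^2\}$, which is Lipschitz and semi-concave, satisfies the inequality with an error $C(M,\varepsilon)\to 0$ both in the viscosity and distributional senses, and carries all the test-function computations (including the $M-\psi^M_\varepsilon$ test for \eqref{eqn:H1_bound}, essentially your weighted Caccioppoli) before passing to the limit using convexity of $\bar H$. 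Your proposal needs this regularization, or an equivalent device, to be rigorous. The remaining components --- the Caccioppoli-type estimate for \eqref{eqn:H1_bound} and the converse via l.s.c.\ superharmonic representatives --- match the paper's argument in substance.
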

   
    \begin{proof}
        First, we fix $M\geq 0$ and show there is a sequence $(\psi^M_\varepsilon)_\varepsilon$ such that $\psi^M_\varepsilon$ is $1/\varepsilon-$Lipschitz and semi-concave, $\psi^M_\varepsilon \rightharpoonup \psi\wedge M$ in $H^1_V(\R^d)$ and $\psi_\varepsilon^M$ satisfies (in a strong sense)
        \begin{equation}\label{Approx viscosity solution}
    \frac{1}{2}\Delta  \psi_\varepsilon^M(x)+\overline{H}\big(x,\nabla \psi_\varepsilon^M(x)\big)\leq C(M,\varepsilon).
    \end{equation}
        Set $\psi_\varepsilon^M(x)=\inf_{z \in \mathbb{R}^d}\{\psi(z)\wedge M + \frac{1}{2 \varepsilon^2} |x-z|^2\}$, for all $x \in \mathbb{R}^d$. We let $z_x$ (depending also on $\varepsilon,M$) be such that $\psi_\varepsilon^M(x)=\psi (z_x) \wedge M + \frac{1}{2 \varepsilon^2} |x-z_x|^2$, which is well defined for sufficiently small $\varepsilon$ by lower semi-continuity and the lower bound on $\psi$. %of Proposition \ref{prop:Holder_continuity}.
        Fix $\varepsilon>0$ and $x\in \R^d$ and let $\phi$ be a smooth function such that $\phi \leq \psi_\varepsilon^M$ with $\phi(x)=\psi_\varepsilon^M(x)$. We define $w(y):=\phi(y+x-z_x)-\frac{1}{2 \varepsilon^2} |x-z_x|^2.$ We have $w(z_x)=\psi(z_x)\wedge M$ and $w \leq \psi\wedge M$. Hence, by (\ref{eqn:psi_supersolution}) and that $\overline{H}(x,0)\leq 0$, $$\frac{1}{2}\Delta  w(z_x)+\overline{H}\big(z_x,\nabla w(z_x)\big)\leq 0.
    $$
    Yet, $\nabla w(z_x)=\nabla \phi(x)$ and $\Delta w(z_x)=\Delta \phi(x)$. Using also the fact that $\bar{H}$ is uniformly continuous, we get
    $$\frac{1}{2}\Delta  \phi(x)+\overline{H}\big(x,\nabla \phi(x)\big)\leq C(|x-z_x|).
    $$
    Moreover, we have
    $$\psi(z_x)\wedge M + \frac{1}{2 \varepsilon^2} |x-z_x|^2 \leq \psi(x)\wedge M. $$
    Hence, $|x-z_x|\leq C(M,\varepsilon)$, and it follows that $\psi_\varepsilon^M$ is a supersolution in the viscosity sense with error $C(M,\varepsilon)$ where $C(M,\varepsilon)\rightarrow 0$ as $\varepsilon\rightarrow 0$, but also %However, $\psi^\epsilon$ is also a supersolution
    in the sense of distributions thanks to the semi-concavity of $\psi_\varepsilon^M$.
    %by the regularity from semiconvexity as in \cite{ishii1995equivalence} (see also \cite{GKP3} appendix).
    Using $e^{-V}$ as a test function, we obtain
    \begin{align*}
        C(M,\varepsilon)\geq&\ \int_{\R^d}\Big[ \frac{1}{2}\Delta \psi_\varepsilon^M(x)+\overline{H}\big(x,\nabla \psi_\varepsilon^M(x)\big)\Big]e^{-V(x)}dx\\
\geq&\ -\frac{1}{2}\int_{\R^d}|\nabla \psi_\varepsilon^M(x)|\, |\nabla V(x)|e^{-V(x)}dx+\lambda\|\nabla \psi_\varepsilon^M\|_{L^1_V}-C.
\end{align*}  
Yet, $\gamma<2\lambda$.
%By H\"{o}lder's inequality,
Then, we infer that
$$
\|\nabla \psi_\varepsilon^M\|_{L^1_V(\R^d)}\leq C,
%+\frac{1}{2}\|\nabla \psi_\varepsilon^M\|_{L^q_V}\|\nabla V\|_{L^p_V},
$$
which shows the estimate (\ref{eqn:Lq_bound}) after taking $\varepsilon\rightarrow 0$. 
For the uniform quadratic lower bound: let us consider as in Proposition \ref{prop:Holder_continuity} the function
% $w$ defined by
%that
%that  
%the function
$w(y):= A-\frac{E}{2}|y|^2$ that touches $\psi_\varepsilon^0$ from below at $x$.  We then have that
$$
    -\frac{d}{2}E+ \lambda E|x|-C(|x| +1)\leq C(\varepsilon),
$$
which implies that, for sufficiently large $E$, $x$ is in a ball of radius $C_1=C_1(d,\,\lambda,\,C,\,E)$ (we note that the constant $E$ can be taken independent of $x$). Then,
$$
  \psi_\varepsilon^0(y)\geq \inf_{|x|\leq C_1}\psi_\varepsilon^0(x)+\frac{E}{2}|x|^2-\frac{E}{2}|y|^2. 
$$
To bound $\psi_\varepsilon^0$ near the origin
we will consider a construction that will be useful for the remainder of the proof.
%Now, assume that $\psi\wedge M$ solves {\normalfont (\ref{eqn:psi_supersolution})} weakly for all $M\geq0$ and $(\psi)_V=0$. To conclude that $\psi$
%\in \mathcal{B}$, we must show that $\psi \in W^{1,q}_V(\mathbb{R}^d)$,
%is lower semi-continuous, %and bounded from below. In fact, one can prove as before $W^{1,q}_V$ and lower bound estimates on $\psi$. So, let us show only lower semi-continuity.
%we consider the following construction:
For any $x$, we let $Q_{x,R}$ denote the uniform distribution on the ball of radius $R$ centered at $x$. Fix $0< R_1<R_2$ and assume that the drift equals zero, then we can find a pair $(\eta,\rho)=(m,0,\rho)\in {\mathcal{E}}(Q_{x,R_1},Q_{x,R_2})$ such that $m$ has support in $[0,T] \times Q_{x,R_2}$. 
%$$
 %   \int_{\R^d}\int_{\R^+} f(y)\rho(d\tau,dy)=\int_{\R^d} f(y)Q_{x,R_2}(dy),
%$$
%and
%$$
 %   \int_{\R^+}\int_{\R^d}\int_{\R^d} L(t,y,u)\eta_t(dy,du)dt=\int_{\R^+}\int_{\R^d}\int_{\R^d} L(t,y,0)\eta_t(dy,du)dt
  %  \leq C_2 (R_2^2-R_1^2)(1+|x|^q),
%$$
%which follows from setting the drift equal to zero, in which case
From (\ref{eqn:convex_Eulerian_evolution}), we have
%implies that
\begin{align*}
    \int_{\R^d}\frac{|y-x|^2}{d}Q_{x,R_2}(dy)=&\ \int_{\R^+}\int_{\R^d}\int_{\R^d}\eta_t(dy,du)dt +\int_{\R^d}\frac{|y-x|^2}{d}Q_{x,R_1}(dy),
\end{align*}
which implies that
$$
%\int_{\R^d}\int_{\R^+} f(y)\rho(d\tau,dy)=\int_{\R^d} f(y)Q_{x,R_2}(dy)
%$$
%and
%$$
   \int_{\R^+}\int_{\R^d} m(t,y)dy\, dt
   %=\int_{\R^+}\int_{\R^d}\int_{\R^d} L(t,y,0)\eta_t(dy,du)dt
    = C (R_2^2-R_1^2).
$$
Define
$$
    \psi_\varepsilon^{M,R}(x):=\int_{\R^d}\psi_\varepsilon^M(y)Q_{x,R}(dy).
$$
%Using the construction above,
%with $0 < R_1<R_2$ so that $Q_{x,R_1}\in L^2_V$ and hence, $(\eta,\rho)$ correspond to $(m,v,\rho)$ with $v=0$ and
As \,$m(t,\cdot)\in H^1(\R^d)$ (see Theorem \ref{thm:Eulerian_regularity}), for each $t \in [0,T]$, with compact support, then again by (\ref{eqn:convex_Eulerian_evolution}) and thanks to the fact that $\bar{H}$ is uniformly continuous w.r.t. $x$, we have %using the uniform  continuity of $\bar{H}$ w.r.t. $x$,
%\begin{align*}
 % \qquad\qquad
 $$ \psi_\varepsilon^{M,R_2}(x)  = \int_{\R^d}\int_{\R^+}\psi^{M}_\varepsilon(y)\rho(d\tau,dy)$$
    $$= \int_{\R^d}\psi^M_\varepsilon(y) Q_{x,R_1}(dy)-\frac{1}{2}\int_{\R^+}\int_{\R^d} \nabla \psi_\varepsilon^M(y)\cdot \nabla m(t,y)\,dy\, dt$$
$$    \leq  \psi_\varepsilon^{M,R_1}(x)-\int_{\R^+}\int_{\R^d}\bar{H}\big(y,\nabla \psi_\varepsilon^M(y)\big)m(t,y)\,dy\,dt $$
    %leq &\  \psi^{M,R_1}(x)-\int_{\R^+}\int_{\R^d}\bar{H}\big(x,\nabla \psi^M(y)\big)m(t,y)dy dt + \int_{\R^+}\int_{\R^d}|x-y|m(t,yy dt\\
  $$  \leq  \psi_\varepsilon^{M,R_1}(x)+C(x)  (R_2^2-R_1^2).
  %+{\red C \int_{\R^+}\int_{\R^d}|x-y|m(t,y)\,dy\,dt} 
  $$
%$$   \leq  \psi^{M,R_1}(x)+C(R_2^2-R_1^2).$$
    %\le
    %    ft[D(1+|x|^q) + R_2\right].$$
%\end{align*}
%In particular,
Letting $R_1 \to 0^+$ and $R_2=1$, we find a uniform lower bound for $\psi_\varepsilon$
with $|x|\leq C_1$
using (\ref{eqn:Lq_bound}) and the Poincar\'{e} inequality to bound $\psi_\varepsilon$ on balls of radius $1$. This concludes the proof of (\ref{eqn:quadratic_bound}). We now use $M-\psi_\varepsilon^M$ as a test function in (\ref{Approx viscosity solution}), so we get
%\begin{align*}
%&\
$$\int_{\R^d} C(M,\varepsilon)\big(M-\psi_\varepsilon^M(x)\big)e^{-V(x)}dx$$
$$\geq \int_{\R^d} \big(M-\psi_\varepsilon^M(x)\big)\Big[\frac{1}{2}\Delta \psi_\varepsilon^M(x)+\overline{H}\big(x,\nabla \psi_\varepsilon^M(x)\big)\Big]e^{-V(x)}dx$$
$$    = \int_{\R^d} \Big[\frac{1}{2}|\nabla \psi_\varepsilon^M(x)|^2 +\big(M-\psi_\varepsilon^M(x)\big)\Big(\frac{1}{2}\nabla \psi_\varepsilon^M(x)\cdot \nabla V(x)+\overline{H}\big(x,\nabla {\psi_\varepsilon^M}(x)\big) \Big)\Big]e^{-V(x)}dx$$
$$    \geq \int_{\R^d} \Big[\frac{1}{2}|\nabla \psi_\varepsilon^M(x)|^2-\big(M-\psi_\varepsilon^M(x)\big)\overline{L}\big(x,  -\frac{1}{2}\nabla V(x)\big)\Big]e^{-V(x)}dx.$$
%\end{align*}
%where $v\mapsto \overline{L}(x,v)$ is the Legendre transform of $z\mapsto \overline{H}(x,z)$, or equivalently $\overline{L}(x,v)=\sup_{t} L(t,x,v)$
%On the other hand, we have
%by Lemma \eqref{lem:uniform_q_bound}, that
%$$\|\psi\|_{W^{1,q}_V} \leq \bar{C}$$
%and then,
%and, this
Using the uniform quadratic lower bound on $\psi_\varepsilon^M$, the bounds on $L$ and the estimate \eqref{eqn:Lq_bound},
%the fact that $\frac{1}{2}|\nabla V|\leq \lambda$
we get
$$
    \|\nabla \psi_\varepsilon^M\|_{L^2_V(\R^d)}^2\leq M\big(C+C(M,\varepsilon)\big) + C,
    %+\|\psi_\varepsilon^M\|_{L^2_V}\big(\|C+C(M,\varepsilon)\big),
$$
which yields the estimate \eqref{eqn:H1_bound}.
%using the bound on $L$. %and $V$ as well as the fact that $(\psi)_V=0$.
To see that $\psi\wedge M$ is a distributional solution to (\ref{eqn:psi_supersolution}), we fix a smooth compactly supported test function $h\geq0$ and consider the limit
\begin{align*}
   0\geq&\ \lim_{\epsilon \rightarrow 0}\int_{\R^d}\Big[-\frac{1}{2}\nabla\psi_\varepsilon^M(x)\cdot \nabla h(x)+\overline{H}\big(x,\nabla \psi_\varepsilon^M(x)\big) h(x)\Big]dx\\
   \geq&\ \int_{\R^d}\Big[-\frac{1}{2}\nabla\psi^M(x)\cdot \nabla h(x) +\overline{H}\big(x,\nabla \psi^M(x)\big)h(x)\Big]dx,
\end{align*}
where the second inequality follows from convexity of $\overline{H}$ and hence weak lower semi-continuity of the integral.

%{\color{red}{Let $M \to +\infty$, so how can we pass to the limit in this inequality? this is possible if $h \in W^{1,p}_V$, but we need it for any $h \in H^1_V$!}}\\

For the last statement: assume that $\psi\wedge M$ solves {\normalfont (\ref{eqn:psi_supersolution})} weakly for all $M\geq0$.
%and $(\psi)_V=0$.
%let us suppose that  $\psi\in W^{1,q}_V(\R^d)$, $(\psi)_V=0$, $\psi$ has a $q-$growth lower bound, and $\psi\wedge M$ solves (\ref{eqn:psi_supersolution}) weakly for all $M\geq0$. T
%To conclude that $\psi \in \mathcal{B}$, we must show that $\psi \in W^{1,q}_V(\mathbb{R}^d)$, is lower semi-continuous and bounded from below. In fact, one can prove as before $W^{1,q}_V$ and lower bound estimates on $\psi$. So, let us show only lower semi-continuity.
%and solves (\ref{eqn:psi_supersolution}) in the sense of viscosity.
%We then have, u
%Recalling the previous estimates on $\psi^{M,R}(x)$,
Recalling the previous estimates, we have  %%construction above with $0<R_1<R_2$ so that $Q_{x,R_1}\in L^2_V$ and hence, $(\eta,\rho)$ correspond to $(m,v,\rho)$ with $v=0$ and $m(t,\cdot)\in H^1(\R^d)$ (by Theorem \ref{thm:Eulerian_regularity}) with compact support, that
\begin{align*}
    \psi^{M,R_2}(x) - C(x) R_2^2
    %=&\   \int_{\R^d}\int_{\R^+}\psi^{M}(y)\rho(d\tau,dy)\\
    %=&\ \int_{\R^d}\psi^M(y) Q_{x,R_1}(dy)-\frac{1}{2}\int_{\R^+}\int_{\R^d} \nabla \psi^M(y)\cdot \nabla m(t,y)dy dt\\
    %\leq &\  \psi^{M,R_1}(x)-\int_{\R^+}\int_{\R^d}\bar{H}\big(y,\nabla \psi^M(y)\big)m(t,y)dy dt\\
    \leq &\ \psi^{M,R_1}(x)-C(x) R_1^2.
    %\left[D(1+|x|^q) +  R_2\right].
\end{align*}
It follows that $R\mapsto \psi^{M,R}(x)-C(x) R^2$ is monotonically decreasing, which implies that $\psi^M$ (and so, $\psi$) is lower semi-continuous (see \cite{Sylvestre2015}).
$\qedhere$
%To see that $\psi$ solves (\ref{eqn:psi_supersolution}) in the sense of viscosity, we note that $\psi^R(x):=\int_{Q_{x,R}} \psi$\, satisfies
%$$
%\psi^R(x) \geq \mathbb{E}^{\alpha\times \mathbb{P}}\Big[\psi(X_\tau)-\int_0^\tau \bar{L}(X_t,\beta_t)dt\Big]
%$$
%for all $(\mathbb{P},\beta,\alpha)\in \mathcal{A}(Q_{x,R})$.  Taking the limit as $R\rightarrow 0$ we have $\psi(x)=\lim_{R\rightarrow 0}\psi^R(x)$, and any $(\mathbb{P},\beta,\alpha)\in \mathcal{A}(\delta_x)$ can be translated to $(\mathbb{P}^R,\beta^R,\alpha^R)\in \mathcal{A}(Q_{x,R})$, showing that
%$$
%\psi(x) \geq \lim_{R\rightarrow 0}\mathbb{E}^{\alpha^R\times \mathbb{P}^R}\Big[\psi(X_\tau)-\int_0^\tau \bar{L}(X_t,\beta_t)d\tau\Big]\geq \mathbb{E}^{\alpha\times \mathbb{P}}\Big[\psi(X_\tau)-\int_0^\tau \bar{L}(X_t,\beta_t)d\tau\Big],
%$$
%which implies that $\psi$ solves (\ref{eqn:psi_supersolution}) in the sense of viscosity.
% $\qedhere$
% On the other hand, we have from H\"older and Poincar\'e inequalities
% $$|(\psi_\varepsilon)_V| \leq  \|\psi_\varepsilon\|_{L^q_V}\,\,\,\mbox{and}\,\,\,\|\psi_\varepsilon - (\psi_\varepsilon)_V\|_{L^2_V} \leq C \|\nabla \psi_\varepsilon\|_{L^2_V}.$$
% Then,
% $$    \|\psi_\varepsilon\|_{H^{1}_{V}}^2 \leq C( M\|\overline{L}(\cdot,\frac{1}{2}\nabla V)\|_{L^1_V}+\|\psi\|_{L^q_V}\|\overline{L}(\cdot,\frac{1}{2}\nabla V)\|_{L^p_V} + \|\psi\|_{L^q_V}^2). $$Using %Poincar\'{e} inequality {\red [citations]},
% Lemma \ref{lem:uniform_q_bound} and the bounds for $\overline{L}$, we get the result.
    \end{proof}

\begin{remark}
In the case that $\psi\in H^1_V(\R^d)$, bounded above,  and $\psi$ satisfies (\ref{eqn:psi_supersolution}) in a weak sense, $\psi $ is lower semi-continuous by Proposition \ref{prop:weak_solution} and $\psi$ will satisfy (\ref{eqn:psi_supersolution}) in the viscosity sense.  Arguments for this can be found in \cite{lions1983optimal} and \cite{ishii1995equivalence}. To sketch an argument for this, we note that $\psi^R(x):=\int_{{\R^d}} \psi(y)Q_{x,R}(dy)$\, satisfies
$$
\psi^R(x) \geq \mathbb{E}^{\alpha\times \mathbb{P}}\Big[\psi(X_\tau)-\int_0^\tau \bar{L}(X_t,\beta_t)dt\Big]
$$
for all $(\mathbb{P},\beta,\alpha)\in \mathcal{A}(Q_{x,R})$.  Taking the limit as $R\rightarrow 0$ we have $\psi(x)=\lim_{R\rightarrow 0}\psi^R(x)$, and any $(\mathbb{P},\beta,\alpha)\in \mathcal{A}(\delta_x)$ can be translated to $(\mathbb{P}^R,\beta^R,\alpha^R)\in \mathcal{A}(Q_{x,R})$, showing that
$$
\psi(x) \geq \lim_{R\rightarrow 0}\mathbb{E}^{\alpha^R\times \mathbb{P}^R}\Big[\psi(X_\tau)-\int_0^\tau \bar{L}(X_t,\beta_t)dt\Big]\geq \mathbb{E}^{\alpha\times \mathbb{P}}\Big[\psi(X_\tau)-\int_0^\tau \bar{L}(X_t,\beta_t)dt\Big],
$$
which implies that $\psi$ solves (\ref{eqn:psi_supersolution}) in the sense of viscosity.
\end{remark}

\begin{lemma}\label{lem:JinB}
   The map $(t,x)\mapsto J_\psi(t,x)$ is lower semi-continuous for all $\psi\in \mathcal{B}$. % Furthermore, assume that $L \geq |\nabla V|^p$ (set, for instance, $V(x)=C|x|^\gamma$ with $1<\gamma \leq 1 +\frac{1}{p-1}$). Then
   Moreover, under the assumptions of Proposition \ref{prop:weak_solution}, %and that $V(x)=C|x|^\gamma$ with $\gamma \leq 1 +\frac{1}{p-1}$,
   we have, for any $\mu \in L^2_{-V}(\R^d)$, $\psi\mapsto \int_{\R^d}J_\psi(0,x)\mu(dx)$ is lower semi-continuous with respect to the strong $L^1_V-$ convergence
    on $\mathcal{B} \cap \{\psi  : ||\psi \wedge M||_{H^1_V} \leq C(M), \,\mbox{for any}\,\,M \geq 0
    %\,\eqref{eqn:psi_supersolution}\,\,\mbox{in the viscosity sense,\, bounded and}\,\,(\psi)_V=0
    \}$.
\end{lemma}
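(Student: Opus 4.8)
The first assertion is a direct extension of Lemma~\ref{lem:measurability}. As there, I would write, for $\psi\in\mathcal{B}$,
\[
J_\psi(t,x)=\sup_{(\mathbb{P},\beta,\alpha)\in\mathcal{A}(\delta_0)}F_{(\mathbb{P},\beta,\alpha)}(t,x),\qquad F_{(\mathbb{P},\beta,\alpha)}(t,x):=\mathbb{E}^{\alpha\ltimes\mathbb{P}}\Big[\psi(X_\tau+x)-\int_0^\tau L(s+t,X_s+x,\beta_s)\,ds\Big].
\]
For controls of finite cost, coercivity of $L$ (bounded below by a positive constant) forces $\mathbb{E}^{\alpha\ltimes\mathbb{P}}[\tau]<+\infty$, so the running-cost term of $F_{(\mathbb{P},\beta,\alpha)}$ is continuous in $(t,x)$ by the uniform continuity of $L$ in $(t,x)$, while $x\mapsto\mathbb{E}^{\alpha\ltimes\mathbb{P}}[\psi(X_\tau+x)]$ is lower semi-continuous by Fatou's lemma: near a given $x$, the quadratic lower bound on $\psi\in\mathcal{B}$ together with the second-moment estimate of Proposition~\ref{prop:moment_bound} (or a direct bound on $\mathbb{E}[|X_\tau|^2]$ in terms of the cost) provides a uniform $(\alpha\ltimes\mathbb{P})$-integrable minorant. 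Thus each $F_{(\mathbb{P},\beta,\alpha)}$ is l.s.c.; controls of infinite cost contribute $-\infty$; and a supremum of l.s.c.\ functions is l.s.c., which is the first claim. The same reasoning yields, exactly as in \eqref{eqn:equality}, the identity $\int_{\R^d}J_\psi(0,x)\mu(dx)=\sup_{(\mathbb{P},\beta,\alpha)\in\mathcal{A}(\mu)}\mathbb{E}^{\alpha\ltimes\mathbb{P}}\big[\psi(X_\tau)-\int_0^\tau L(t,X_t,\beta_t)\,dt\big]$ for $\psi\in\mathcal{B}$, using the l.s.c.\ just established in the Kuratowski--Ryll-Nardzewski selection step.

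For the second assertion, I would first record, by combining this identity with Propositions~\ref{prop:stochastic_relaxation} and \ref{prop:Eulerian_embedding} and the weak-duality inequality from the proof of Theorem~\ref{thm:Eulerian_duality} (as in Proposition~\ref{prop:dual_equivalence}), the Eulerian representation
\[
\int_{\R^d}J_\psi(0,x)\,\mu(dx)=\sup_{(\eta,\rho)\in\tilde{\mathcal{E}}(\mu)}\Big\{\int_{\R^+}\int_{\R^d}\psi(y)\,\rho(d\tau,dy)-\int_{\R^+}\int_{\R^d}\int_U L(t,x,u)\,\eta(dt,dx,du)\Big\}
\]
for every $\psi\in\mathcal{B}$; moreover, by truncation (Lemma~\ref{lem:truncation}) and, in the bounded-drift regime, by the strong Eulerian reduction of Theorem~\ref{thm:Eulerian_regularity}, this supremum may be restricted to finite-cost, compactly supported triplets $(m,v,\rho)\in\mathcal{E}(\mu)$ with $m\in L^2([0,T],H^1(\R^d))$. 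Now let $\psi^k\to\psi$ strongly in $L^1_V$ with all of them in $\mathcal{B}\cap\{\,f:\|f\wedge M\|_{H^1_V}\le C(M)\ \forall M\ge0\,\}$. For any such $(m,v,\rho)$, the weak-duality inequality gives, for each $M\ge0$,
\[
\int_{\R^d}J_{\psi^k}(0,x)\,\mu(dx)\ \ge\ \int_{\R^+}\int_{\R^d}\psi^k(y)\,\rho(d\tau,dy)-\int L\,d\eta\ \ge\ \int_{\R^+}\int_{\R^d}(\psi^k\wedge M)(y)\,\rho(d\tau,dy)-\int L\,d\eta ,
\]
so it suffices to prove $\int(\psi^k\wedge M)\,d\rho\to\int(\psi\wedge M)\,d\rho$ for each fixed $M$ and then let $M\to\infty$.

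For the convergence at fixed $M$ I would use the $H^1(\R^d)$-functional representation of $\rho$ from Theorem~\ref{thm:Eulerian_regularity}, $\int f\,d\rho=\int f\hat{\mu}\,dx+\int_0^T\int_{\R^d}\big[-\tfrac12\nabla f\cdot\nabla m+(v\cdot\nabla f)\,m\big]\,dx\,dt$. Truncation at $M$ is a contraction in $L^1_V$, so $\psi^k\wedge M\to\psi\wedge M$ in $L^1_V$, and the uniform bound $\|\psi^k\wedge M\|_{H^1_V}\le C(M)$ forces weak $H^1$ convergence on every ball, in particular on one containing $\spt\rho\cup\spt m$; hence the two $m$-terms pass to the limit against the fixed $L^2$ data $\int_0^T\nabla m\,dt$ and $\int_0^T vm\,dt$, and the $\mu$-term passes to the limit by Cauchy--Schwarz since $\psi^k\wedge M\to\psi\wedge M$ in $L^2_V$ (interpolating the $L^\infty$-bound $2M$ against $L^1_V$-convergence) and $\hat{\mu}\in L^2_{-V}$. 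Letting $M\to\infty$, $\psi\wedge M\nearrow\psi$, and $\psi$ is minorized by a quadratic that is $\rho$-integrable (Proposition~\ref{prop:moment_bound} gives $\rho$ a finite second moment), so $\int(\psi\wedge M)\,d\rho\to\int\psi\,d\rho$ by monotone convergence; taking the supremum over $(m,v,\rho)$ then yields $\liminf_k\int J_{\psi^k}(0,\cdot)\,d\mu\ge\int J_\psi(0,\cdot)\,d\mu$. The two points needing genuine care are (i) upgrading the Legendre/Eulerian duality of Theorems~\ref{thm:stochastic_duality} and \ref{thm:Eulerian_duality} and Proposition~\ref{prop:dual_equivalence} from $C_b(\R^d)$ to the class $\mathcal{B}$ of merely l.s.c., quadratically-below-bounded, $L^1_V$ potentials — checking that the selection argument of Lemma~\ref{lem:measurability} still runs and that truncation does not lose the value — and (ii) reconciling the weighted spaces $H^1_V$, $L^1_V$ with the unweighted $H^1$-functional bound for $\rho$, and controlling the $\mu$-term at infinity through the quadratic lower bound and the decay of $e^{-V}$; this second point is the main technical obstacle.
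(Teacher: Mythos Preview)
Your approach is essentially the paper's: both parts proceed exactly as there --- the first by the translation representation of $J_\psi$ (with the added care, appropriate for $\psi\in\mathcal{B}$ rather than $C_b$, of using the quadratic lower bound and Fatou), and the second via the Eulerian representation, truncation to compactly supported $(m,v,\rho)\in\mathcal{E}(\mu)$, the $H^1$-functional formula for $\rho$ from Theorem~\ref{thm:Eulerian_regularity}, the cut-off $\psi\wedge M$, and finally $M\to\infty$ using the moment bound of Proposition~\ref{prop:moment_bound}. The two technical concerns you flag are exactly the places where the paper is also terse.

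One small slip: your claim that $\psi^k\wedge M\to\psi\wedge M$ strongly in $L^2_V$ ``by interpolating the $L^\infty$-bound $2M$ against $L^1_V$-convergence'' is not justified --- $\psi^k\wedge M$ is bounded above by $M$ but only quadratically below, so there is no uniform $L^\infty$ bound to interpolate. You do not need strong $L^2_V$ convergence, however: the uniform bound $\|\psi^k\wedge M\|_{H^1_V}\le C(M)$ together with $L^1_V$-convergence already gives \emph{weak} convergence in $H^1_V$ (hence in $L^2_V$), and since both the $\mu$-term $f\mapsto\int f\hat\mu\,dx$ (with $\hat\mu\in L^2_{-V}$) and the $m$-terms (with $m$ compactly supported and in $L^2([0,T],H^1)$) are continuous linear functionals on $H^1_V$, weak convergence suffices. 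This is in fact how the paper closes the argument: it simply observes that $f\mapsto\int f\,d\rho^{T,R}$ is a continuous linear functional of $H^1_V(\R^d)$.
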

\begin{proof}
    The proof that $(t,x)\mapsto J_\psi(t,x)$ is lower semi-continuous is the same as for Lemma \ref{lem:measurability}. %except the lower bound is now the $q-$growth lower bound of $\mathcal{B}$.
    For $\mu \in L_{-V}^2(\R^d)$, Lemma \ref{lem:measurability} and Theorem \ref{thm:Eulerian_duality} imply that
    \begin{align*}
        \int_{\R^d} J_\psi(0,x)\mu(dx)=\sup_{(\eta,\rho)\in \tilde{\mathcal{E}}(\mu)}\Big\{\int_{\R^d}\int_{\R^+} \psi(x)\rho(d\tau,dx)-\int_{\R^+}\int_{\R^d}\int_{\R^d}L(t,x,u)\eta_t(dx,du)dt\Big\}.
    \end{align*}
    By Lemma \ref{lem:truncation}, using the truncation $\psi^M=\psi\wedge M$, this becomes
    \begin{align*}
        =&\ \sup_{(m^{T,R},v^{T,R},\rho^{T,R})\in \mathcal{E}(\mu),\,M \geq 0}\Big\{\int_{\R^d}\int_{\R^+} \psi^M(x)\rho^{T,R}(d\tau,dx)-\int_{\R^+}\int_{\R^d}L\big(t,x,v^{T,R}\big) m^{T,R}\,dx dt\Big\},
    \end{align*}
    since we have that 
    $$  
        \liminf_{T,R,M\rightarrow \infty}\int_{\R^d}\int_{\R^+} \psi^M(x)\rho^{T,R}(d\tau,dx)\geq \int_{\R^d}\int_{\R^+} \psi(x)\rho(d\tau,dx)
    $$ 
    by lower semi-continuity of $\psi$, the quadratic lower bound (\ref{eqn:quadratic_bound}) on $\psi$ and the quadratic moment bound for $\rho$ (see Proposition \ref{prop:moment_bound}). %nonnegativity and that $\int_T^\infty\int_{\R^d} \rho(d\tau,dx)\rightarrow 0$, and monotonicity of $\psi$ in $M$.
    We will show that for each $(m^{T,R},v^{T,R},\rho^{T,R})\in \mathcal{E}(\mu)$ with finite cost and $M\geq 0$,
    $$
        \psi \mapsto \int_{\R^d}\int_{\R^+}\psi^M(x)\rho^{T,R}(d\tau,dx)
    $$
    is lower semi-continuous, which follows from demonstrating that the map (without the truncation) is a continuous linear functional of $H^1_V(\R^d)$.
    We now take $(m^{T,R},v^{T,R},\rho^{T,R})\in \mathcal{E}(\mu)$.
    %and we define
    %$$
    %   m_V^T(t,x):=e^{V(x)}m^T(t,x).
    %$$
   % where $Q(t,x)=V(x)$. %and $\gamma$ is smaller than the Poincar\'{e} constant of $H^1_V(\R^d)$.
   % Then, recalling the estimates in Theorem \ref{thm:Eulerian_regularity}, we use $m_V^T$ as a test function in (\ref{eqn:strong_Eulerian_evolution}) to obtain
    %$$  \int_{\R^d}\int_{\R^+} m_V^T(\tau,y)\rho(d\tau,dy)- \|\mu\|_{L^2_{-V}(\R^d)}^2 $$
   % \begin{align*}
    %$$  = \int_{\R^+}\int_{\R^d} \Big[ \frac{1}{2}\partial_t(m_V^T(t,x)^2)
      %+ \frac{\gamma}{2} m_Q(t,x)^2e^{-Q(t,x)}
     %-\frac{1}{2}|\nabla m_V^T(t,x)|^2 %\qquad\qquad\qquad\qquad
     %+\big(v(t,x) + \frac{1}{2} \nabla V(x)\big)\cdot \nabla m_V^T(t,x) m_V^T(t,x) \Big]e^{-V(x)}.$$\\
%    \end{align*}
    %This yields that  
    %$$\frac{1}{2}\|\nabla m_Q\|_{L^2_Q(\R^+\times \R^d)}^2$$
    %\begin{align*}
%\frac{1}{2}\|\nabla m_V^T\|_{L^2_V }^2 \leq&\ \frac{1}{2} \|\mu\|_{L_{-V}^2}^2+\bigg(\gamma + \frac{\delta}{2}\bigg)\|m_V^T\nabla m_V^T\|_{L^1_V}.
%&\ \qquad
%+\,\, \frac{\gamma}{2}\|m_Q\|^2_{L^2_Q}
%\qquad\qquad\,\, 
 %+\,\,
%\frac{\delta}{2}\|m_V\nabla m_V\|_{L^1_V}.
 %   \end{align*}\\
%{\red We conclude as in Theorem \ref{thm:Eulerian_regularity}
%by Sobolev embedding on the weighted space 
%that if the drift is small enough, then $\nabla m_V^T\in L^2_V(\R^+\times \R^d)$.}
%and the Poincar\'{e} inequality on $H^1_V(\R^d)$. 
Then, for every $f\in H^1_V(\R^d)$, we have
       $$\int_{\R^d}\int_{\R^+}f(x)\rho^{T,R}(d\tau,dx)$$ \begin{align*}
      = &\ \int_{\R^d} f(x)\mu(dx)+\int_{0}^T\int_{B(0,R)}\nabla f(x)\cdot \Big(v(t,x)m^{T,R}(t,x)-\frac{1}{2}\nabla m^{T,R}(t,x)
      %+\frac{1}{2}m_V^T(t,x)\nabla V(x)
      \Big)
      %e^{-V}
      %\\
       %&\  \qquad\qquad\qquad\qquad\qquad-\int_{\R^+}\int_{\R^d}L\big(t,x,v(t,x)\big)m(t,x)\, dx\, dt,
    \end{align*}
    and it follows from the estimates on $m^{T,R}$ (see Theorem \ref{thm:Eulerian_regularity}) that this is a continuous linear functional, completing the proof. $\qedhere$
\end{proof}

Now, we are ready to prove attainment in the dual problem \eqref{DualStochastic}:
\begin{theorem}\label{thm:dual_attainment_large_p}
    We suppose that the Lagrangian $L$ is bounded, %satisfies \eqref{eqn:Hamiltonian_coercive},
%    $$        c\big(|u|^p + |x|^q +1\big)\leq L(t,x,u)\leq C(|u|^p+|x|^q+1),
    %$$
    %for $p \geq 1+\frac{d}{2}$ ($p\neq2$),
   % $V$ is convex,\,$L \geq |\nabla V|^p$ and \,$\|\nabla V\|_{L^{p^2}_V(\R^d)}$ is finite, %\infty$,
    %$\int_{\R^d} D(1+|x|^q)e^{-V(x)}dx < \infty$,
    $\mu\in L_{-V}^2(\R^d)$, $e^V\nu\in L^\infty(\R^d)$. Then, the dual problem is attained at $\psi^*\in \mathcal{B}$. Furthermore, the set $\mathcal{A}(\mu,\nu)$ is non-empty and the minimizer $(\mathbb{P},\beta,\alpha)$ of the stochastic primal problem $\mathcal{P}_L(\mu,\nu)$ satisfies
    \begin{align}\label{eqn:duality_equality}
        \mathbb{E}^{\alpha\ltimes \mathbb{P}}\Big[\psi^*(X_\tau)-\int_0^\tau L(t,X_t,\beta_t)dt\Big]=\int_{\R^d} J_{\psi^*}(0,x)\mu(dx).
    \end{align}
\end{theorem}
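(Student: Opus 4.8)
The plan is to produce the optimizer by the direct method: take a maximizing sequence for $\mathcal{D}_L(\mu,\nu)$, push it into the class of supersolutions of \eqref{eqn:psi_supersolution}, extract a limit using the a priori bounds of Proposition~\ref{prop:weak_solution}, and pass to the limit in the dual functional. So I would start from $\psi^i\in C_b(\R^d)$ with $\int_{\R^d}\psi^i\,d\nu-\int_{\R^d}J_{\psi^i}(0,x)\,\mu(dx)\to\mathcal{D}_L(\mu,\nu)$. Replacing $\psi^i$ by $\bar\psi^i$ from Proposition~\ref{prop:dual_normalization} only raises the objective (since $\bar\psi^i\ge\psi^i$ and $J_{\bar\psi^i}=J_{\psi^i}$), and subtracting a constant changes neither the objective (both $\int\cdot\,d\nu$ and $\int J_\cdot(0)\,\mu$ shift by the same amount) nor membership in the supersolution class; so I may assume each $\psi^i$ is l.s.c., bounded, a viscosity supersolution of \eqref{eqn:psi_supersolution}, and normalized with $(\psi^i)_V=0$. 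Proposition~\ref{prop:weak_solution} then gives, uniformly in $i$, the estimates \eqref{eqn:Lq_bound}, \eqref{eqn:quadratic_bound}, \eqref{eqn:H1_bound}: $\|\psi^i\|_{W^{1,1}_V}\le\overline C$, $\psi^i\ge-\overline C(1+|x|^2)$, and $\|\psi^i\wedge M\|_{H^1_V}\le C(M)$ for each $M\ge0$; combining $(\psi^i)_V=0$ with the quadratic lower bound also yields $\int_{\R^d}\psi^i_+\,e^{-V}dx=\int_{\R^d}\psi^i_-\,e^{-V}dx\le\overline C'$.

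Next I would build the limit. For each $M\in\mathbb N$ the truncations $\{\psi^i\wedge M\}_i$ are bounded in the Hilbert space $H^1_V(\R^d)$, so a diagonal extraction gives a subsequence with $\psi^i\wedge M\rightharpoonup\psi^{*,M}$ weakly in $H^1_V$ and strongly in $L^1_V$ and a.e.\ (Rellich on balls plus the tail estimate $\int_{|x|>R}e^{-V}\to0$). Passing to the a.e.\ limit in $\psi^i\wedge M\wedge M'=\psi^i\wedge M'$ shows $\psi^{*,M}\wedge M'=\psi^{*,M'}$ for $M'\le M$, so $M\mapsto\psi^{*,M}$ is nondecreasing and I set $\psi^*:=\lim_{M\to\infty}\psi^{*,M}$; the uniform bound $\|\psi^i\wedge M\|_{L^1_V}\le2\overline C'$ passes to $\psi^{*,M}$, hence $\psi^*\in L^1_V$ by monotone convergence and $\psi^*\wedge M=\psi^{*,M}$ with $\|\psi^*\wedge M\|_{H^1_V}\le C(M)$. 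Using convexity of $\bar H$ and weak lower semicontinuity of convex integrands, the weak form of \eqref{eqn:psi_supersolution} passes to $\psi^*\wedge M$ for every $M$, so by the last assertion of Proposition~\ref{prop:weak_solution} the function $\psi^*$ is l.s.c.; the quadratic lower bound also survives. Thus $\psi^*\in\mathcal B$ and lies in the set on which Lemma~\ref{lem:JinB} operates.

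Now the passage to the limit in the objective. The inequality $\int\psi\,d\nu-\int J_\psi(0)\,\mu\le\mathcal{D}_L(\mu,\nu)$ holds for every $\psi\in\mathcal B$ (approximate $\psi$ from below by bounded, locally Lipschitz functions — quadratic inf-convolutions truncated at slowly growing heights — and use monotone convergence for both $\int\cdot\,d\nu$ and $J_\cdot\uparrow J_\psi$), so it remains to show $\int\psi^*\,d\nu-\int J_{\psi^*}(0)\,\mu\ge\mathcal{D}_L(\mu,\nu)$. For the $J$-term I would use the representation from the proof of Lemma~\ref{lem:JinB}: for any fixed $(m,v,\rho)\in\mathcal E(\mu)$ (of the truncated type) and $M\ge0$, one has $\int J_{\psi^i}(0)\,\mu\ge\int_{\R^d}\int_{\R^+}(\psi^i\wedge M)(y)\,\rho(d\tau,dy)-\int L\,m$, and since $f\mapsto\int f\,d\rho$ is continuous and linear on $H^1_V$ (Theorem~\ref{thm:Eulerian_regularity}) while $\psi^i\wedge M\rightharpoonup\psi^*\wedge M$ weakly there, taking $\liminf_i$ and then the supremum over $(m,v,\rho)$ and $M$ yields $\liminf_i\int J_{\psi^i}(0)\,\mu\ge\int J_{\psi^*}(0)\,\mu$. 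For the $\nu$-term I would invoke $e^V\nu\in L^\infty$, which dominates $\nu$ by a multiple of $e^{-V}dx$: the truncations converge in $L^1(\nu)$, and once one knows the positive parts $\{\psi^i_+\}$ are uniformly integrable against $e^{-V}dx$ (hence against $\nu$), it follows that $\int\psi^i\,d\nu\to\int\psi^*\,d\nu$; combined with convergence of the full objective this forces $\int J_{\psi^i}(0)\,\mu$ to converge with limit $\ge\int J_{\psi^*}(0)\,\mu$, whence $\int\psi^*\,d\nu-\int J_{\psi^*}(0)\,\mu\ge\mathcal{D}_L(\mu,\nu)$, i.e.\ $\psi^*$ attains. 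The hard part is exactly this last step: the $\psi^i$ need not be uniformly bounded above — singularities of fundamental-solution type are anticipated — so controlling $\int\psi^i\,d\nu$ genuinely needs $e^V\nu\in L^\infty$ together with a uniform local $L^p$ bound ($p>1$, a weak-Harnack/mean-value estimate) upgrading \eqref{eqn:Lq_bound}, which rules out escape of mass; verifying that the assembled $\psi^*$ is still l.s.c.\ is the other place requiring care.

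Finally the consequences. The a priori bounds give $\int\psi^i\,d\nu\le\|e^V\nu\|_\infty\overline C'$ and $\int J_{\psi^i}(0)\,\mu\ge\int\psi^i\,d\mu\ge-\overline C\int(1+|x|^2)\,d\mu>-\infty$ (finite since $\mu\in L^2_{-V}$), so $\mathcal{D}_L(\mu,\nu)<\infty$; by Theorem~\ref{thm:stochastic_duality} then $\mathcal{P}_L(\mu,\nu)=\mathcal{D}_L(\mu,\nu)<\infty$, hence $\mathcal{A}(\mu,\nu)\neq\emptyset$ and Corollary~\ref{Cor:ls.c. and convex} furnishes a minimizer $(\mathbb P,\beta,\alpha)$ of $\mathcal{P}_L(\mu,\nu)$. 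Since $X_\tau\sim_{\alpha\ltimes\mathbb P}\nu$ with $\psi^*$ integrable against $\nu$, and $\psi^*$ attains the dual value,
\[
\mathbb E^{\alpha\ltimes\mathbb P}\Big[\psi^*(X_\tau)-\int_0^\tau L(t,X_t,\beta_t)\,dt\Big]=\int_{\R^d}\psi^*\,d\nu-\mathcal J_L(\mathbb P,\beta,\alpha)=\int_{\R^d}\psi^*\,d\nu-\mathcal{P}_L(\mu,\nu)=\int_{\R^d}J_{\psi^*}(0,x)\,\mu(dx),
\]
which is \eqref{eqn:duality_equality}.
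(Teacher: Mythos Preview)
Your argument follows the same overall strategy as the paper: normalize a maximizing sequence via Proposition~\ref{prop:dual_normalization}, invoke the uniform a~priori bounds of Proposition~\ref{prop:weak_solution}, extract a limit in $\mathcal{B}$, and handle the $J$-term through Lemma~\ref{lem:JinB}. The one substantive difference is in the compactness step. The paper uses the $W^{1,1}_V$ bound \eqref{eqn:Lq_bound} directly: since the weighted embedding $W^{1,1}_V(\R^d)\hookrightarrow L^1_V(\R^d)$ is compact for the coercive radial weight $V$, the full untruncated sequence $\psi_k$ converges \emph{strongly} in $L^1_V$ along a subsequence. This immediately gives $\int\psi_k\,d\nu\to\int\psi^*\,d\nu$ from $e^V\nu\in L^\infty$, with no need for the additional ``weak-Harnack/$L^p$'' estimate you flag as the hard part, and it places the limit precisely in the setting of Lemma~\ref{lem:JinB} as stated (lower semicontinuity for strong $L^1_V$-convergence on $\{\psi:\|\psi\wedge M\|_{H^1_V}\le C(M)\}$), so that lemma can be cited as a black box. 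Your route via weak $H^1_V$-convergence of truncations and diagonal extraction is valid and reaches the same destination, but the uniform integrability issue you identify is already handled by \eqref{eqn:Lq_bound} together with the weighted compact embedding; no separate elliptic estimate is needed.
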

\begin{proof}
Let $(\psi_k)_k \subset C_b(\mathbb{R}^d)$ be a maximizing sequence in the dual problem \eqref{DualStochastic} with $(\psi_k)_V=0$, for all $k$. By Proposition \ref{prop:dual_normalization},    we can assume that, for each $k$, $\psi_k$ is lower semi-continuous and satisfies \eqref{eqn:psi_supersolution} in the sense of viscosity. Thanks to Proposition \ref{prop:weak_solution}, we also have %$\psi_k \in \mathcal{B}$ and
$$||\psi_k||_{W^{1,1}_V(\R^d)} \leq \overline{C},\ \psi_k \geq -\overline{C}(1+|x|^2),\ \mbox{and}\ ||\psi_k^M||_{H^{1}_V(\R^d)} \leq C(M).$$\\
In particular, this implies that
    $$        \mathcal{D}_L(\mu,\nu) \leq \sup_{\psi\in \mathcal{B}} \Big\{\int_{\R^d}\psi(y)\nu(dy)-\int_{\R^d} J_\psi(0,x)\mu(dx)\Big\}\leq \mathcal{P}_L(\mu,\nu),
    $$
    which means that
    $$\mathcal{D}_L(\mu,\nu) = \sup_{\psi\in \mathcal{B}} \Big\{\int_{\R^d}\psi(y)\nu(dy)-\int_{\R^d} J_\psi(0,x)\mu(dx)\Big\}.$$
    Moreover,
    %by  Proposition \ref{prop:weak_solution}, 
    there is a function $\psi^\star \in L^1_V(\R^d)$ such that, up to a subsequence, $\psi_k \rightarrow \psi^\star$ in $L^1_V$. As $e^V \nu \in L^\infty(\R^d)$, then we have
    $$ \int_{\mathbb{R}^d} \psi_k(x)\nu(dx) \rightarrow \int_{\mathbb{R}^d} \psi^\star(x) \nu(dx).$$
    %We have the quadratic lower bound and $H^1_V(\R^d)$ bounds of the truncation are preserved, and it follows from weak convergence of the truncation in $H^1_V(\R^d)$
    And, it is easy to see that for each $M\geq 0$, $\psi^*\wedge M$ is in $H^1_V(\R^d)$ and is a weak supersolution of (\ref{eqn:psi_supersolution}).  We then have that $\psi^*\in \mathcal{B}$ (thanks again to Proposition \ref{prop:weak_solution}). On the other hand, by Lemma \ref{lem:JinB}, we have that
$$\liminf_k \int_{\R^d} J_{\psi_k}(0,x)\,\mu(dx) \geq \int_{\R^d} J_{\psi^\star}(0,x)\,\mu(dx).$$
% In fact, we have
% \begin{align*}
%     &\ \int_{\R^d} J_{\psi_k}(0,x)\,\mu(dx)\\
%     =&\ \sup_{M>0,\,(\eta,\rho)\in \tilde{\mathcal{E}}(\mu)}\Big[ \int_{\R^+}\int_{\R^d} \psi_k^M(y) \rho(dt,dy)-\int_{\R^+}\int_{\R^d}\int_{\R^d}L(t,x,u)\eta(dt,dx,du)\Big].
% \end{align*}
%     By Theorem \ref{thm:Eulerian_regularity}, we have that $\eta$ has density $m$ such that $m_t\in H^1(\R^d)$, which implies that $m_t \in H^1_V$. In particular, a consequence is that $h\mapsto \int_{\R^+}\int_{\R^d}  h(y)\rho(dt,dy)$ is a  continuous  functional on $H^1_V$. Yet, we have $$||\psi_k^M||_{H^1_V} \leq C(M),$$
%     where $C(M)$ depends only on $M$. Thus, up to a subsequence, $\psi_k^M \rightharpoonup \psi^\star \wedge M$ in $H^1_V$.
%     This implies that
%     $$\int_{\R^+}\int_{\R^d} \psi_k^M(y)\rho(dt,dy) \rightarrow \int_{\R^+}\int_{\R^d} \left[\psi^\star(y) \wedge M \right]\rho(dt,dy).$$
%     This conclude the claim.
    Then, the existence of a maximizer for the dual problem \eqref{DualStochastic} follows.
    %we have, by Proposition \ref{prop:weak_solution},
%In other words, we infer that the $\int_{\R^d} J_\psi(0,x)\mu(dx)$ is expressed as the supremum of continuous functions and thus is lower semi-continuous. \\
    %    A lower bound comes from
    Finally, notice that
    $$
  \int_{\R^d} J_\psi(0,x)\mu(dx)\geq \int_{\R^d} \left[\psi(x)\wedge  0\right]\mu(dx)\geq -\|\psi\wedge 0\|_{H^1_{V}(\R^d)}\|\mu\|_{H^{-1}_{-V}(\R^d)}.
    $$
%    by Propositions \ref{prop:dual_equivalence} \& \ref{prop:dual_normalization}.
%The set $\mathcal{B}_{V}$ is compact with respect to the weak $W^{1,q}_V$ topology.  The cost is upper semi-continuous in that topology by Proposition \ref{prop:lower_semicontinuity} and the assumption that $\nu\in W^{-1,p}_V$.
  Then, $\mathcal{D}_L(\mu,\nu)<+\infty$. This implies that $\mathcal{P}_L(\mu,\nu)$ is finite by Theorem \ref{thm:stochastic_duality} and $\mathcal{A}(\mu,\nu)$ is non-empty. The equation (\ref{eqn:duality_equality}) follows directly from the duality $\mathcal{D}_L(\mu,\nu)=\mathcal{P}_L(\mu,\nu)$ and the existence of optimizers for both problems \eqref{primal} \& \eqref{DualStochastic}. $\qedhere$ %minimizer achieves equality in the duality thus satisfies (\ref{eqn:duality_equality}).
\end{proof}

\section{Hitting Times and Strong Solutions}\label{sec:hitting_times}
 
In this section we address the additional structure of monotonicity of $t\mapsto L(t,x,u)$. In this case the set $$R=\big\{(t,x);\ J_\psi(t,x)=\psi(x)\big\}$$ has the structure of a barrier. In particular, if \,$t\mapsto L(t,x,u)$ is increasing then for $(t,x)\in R$ and $s>t$ we have $(s,x)\in R$, and if $t\mapsto L(t,x,u)$ is decreasing then for $(t,x)\in R$ and $s<t$ we have $(s,x)\in R$.  In either of these cases strict monotonicity implies uniqueness of the optimizer. The monotonicity of $R$ in $t$ follows the same argument in \cite{GKP}.
\begin{proposition}\label{prop:J_monotonicity}
    If $t\mapsto L(t,x,u)$ is increasing then $t\mapsto J_\psi(t,x)$ is nonincreasing, and if $t\mapsto L(t,x,u)$ is decreasing then $t\mapsto J_\psi(t,x)$ is nondecreasing.
\end{proposition}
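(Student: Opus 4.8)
The plan is to read $J_\psi$ through its control representation and exploit the fact that shifting the start time of an admissible strategy either raises or lowers the running cost, depending on the monotonicity of $t\mapsto L(t,x,u)$. Recall that
$$
    J_\psi(t,x)=\sup_{(\mathbb{P},\beta,\alpha)\in \mathcal{A}_t(\delta_x)} \mathbb{E}^{\alpha\ltimes \mathbb{P}}\Big[\psi(X_{\tau})-\int_t^{\tau} L(s,X_{s},\beta_{s})ds\Big],
$$
where $\mathcal{A}_t(\delta_x)$ is obtained from $\mathcal{A}(\delta_x)$ by the time-translation pushforward $T^{t,0}_\#$. The key observation, already recorded in Lemma~\ref{lem:measurability}, is the ``intrinsic'' form
$$
    J_\psi(t,x)=\sup_{(\mathbb{P},\beta,\alpha)\in \mathcal{A}(\delta_0)}\mathbb{E}^{\alpha \ltimes \mathbb{P}}\Big[\psi(X_\tau+x)-\int_0^\tau L(s+t,X_s+x,\beta_s)ds\Big],
$$
so that $t$ enters only through the first argument of $L$. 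Since each $(\mathbb{P},\beta,\alpha)\in\mathcal{A}(\delta_0)$ is admissible for every $t$, the comparison is reduced to a pointwise inequality between integrands.

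\textbf{First} I would fix $(\mathbb{P},\beta,\alpha)\in\mathcal{A}(\delta_0)$ and $t_1<t_2$. If $s\mapsto L(s,x,u)$ is increasing, then $L(s+t_1,X_s+x,\beta_s)\le L(s+t_2,X_s+x,\beta_s)$ for every $s\ge 0$, $\mathbb{P}$-a.s., hence
$$
    \mathbb{E}^{\alpha\ltimes\mathbb{P}}\Big[\psi(X_\tau+x)-\int_0^\tau L(s+t_1,X_s+x,\beta_s)\,ds\Big]\;\ge\;\mathbb{E}^{\alpha\ltimes\mathbb{P}}\Big[\psi(X_\tau+x)-\int_0^\tau L(s+t_2,X_s+x,\beta_s)\,ds\Big].
$$
Taking the supremum over $(\mathbb{P},\beta,\alpha)\in\mathcal{A}(\delta_0)$ on both sides gives $J_\psi(t_1,x)\ge J_\psi(t_2,x)$, i.e.\ $t\mapsto J_\psi(t,x)$ is nonincreasing. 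The decreasing case is symmetric: if $s\mapsto L(s,x,u)$ is decreasing, the inequality between integrands reverses, giving $J_\psi(t_1,x)\le J_\psi(t_2,x)$, so $t\mapsto J_\psi(t,x)$ is nondecreasing.

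\textbf{The only subtlety} is to make sure that an element of $\mathcal{A}(\delta_0)$ really is simultaneously admissible for building competitors at every start time $t$; this is exactly what the representation in Lemma~\ref{lem:measurability} encodes, since the translation maps $T^{t,x}$ are bijections and leave the Brownian-motion constraint and the randomized-stopping-time conditions intact (the time shift does not affect the law of $W^\beta$ nor the adaptedness of $\alpha$ after the corresponding shift of filtration). One should also note that the integrand comparison is valid even when the common value $+\infty$ occurs, so no integrability hypothesis beyond what defines $\mathcal{A}$ is needed, and the inequality survives passing to the supremum because it holds term by term. This is the whole argument; there is no genuine obstacle, only the bookkeeping of the translation, which is already in place from Section~\ref{sec:duality}.
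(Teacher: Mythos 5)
Your proof is correct and follows essentially the same route as the paper's: both re-express $J_\psi(t,x)$ and $J_\psi(s,x)$ as suprema over a common, time-independent admissible class (you via the translated representation over $\mathcal{A}(\delta_0)$ from Lemma~\ref{lem:measurability}, the paper via $\mathcal{A}_t(\delta_x)$ with the shift $r\mapsto r-t+s$), and then compare the running costs pointwise using the monotonicity of $L$ in its time argument before taking the supremum. No gap.
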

\begin{proof}
     We suppose $t\mapsto L(t,x,u)$ is increasing and select $0\leq t<s$.  We can express the value function at time $s$ by
     \begin{align*}
         J_\psi(s,x)=&\ \sup_{(\mathbb{P},\beta,\alpha)\in \mathcal{A}_t(\delta_x)}\Big\{\mathbb{E}^{\alpha\ltimes \mathbb{P}}\Big[\psi(X_\tau)-\int_t^\tau L\big(r-t+s,X_r,\beta_r\big)dr\Big]\Big\}\\
         \leq&\ \sup_{(\mathbb{P},\beta,\alpha)\in \mathcal{A}_t(\delta_x)}\Big\{\mathbb{E}^{\alpha\ltimes \mathbb{P}}\Big[\psi(X_\tau)-\int_t^\tau L\big(r,X_r,\beta_r\big)dr\Big]\Big\}=J_\psi(t,x).
     \end{align*}
     The proof in the case that $t\mapsto L(t,x,u)$ is decreasing is the same with the inequality reversed.
\end{proof}

We require a verification type theorem that will allows us to characterize the optimal process and stopping time by the dual optimizer.
   
    \begin{theorem}\label{thm:verification}
       Suppose the dual problem \,{$\mathcal{D}_L(\mu, \nu)$} is attained at $(\psi,J_\psi)$ and that $(\mathbb{P},\beta,\alpha)\in \mathcal{A}(\mu,\nu)$ minimizes the primal problem {$\mathcal{P}_L(\mu, \nu)$}.  Then,
        \begin{align}\label{eqn:stopping_critera}
            J_\psi(\tau,X_\tau)=\psi(X_\tau)
      \,\,\, \,\,\,\,\alpha\ltimes\mathbb{P}\,\,\,\,\mbox{almost surely}, \end{align}
     and
        \begin{align}\label{eqn:martingale}
            M_t:=J_\psi(t,X_t)-\int_0^t L(s,X_s,\beta_s)ds
        \end{align}
        satisfies for any $t>s$,
        \begin{align}\label{eqn:martingale_criteria}
            \mathbb{E}^{\alpha\ltimes \mathbb{P}}\Big[M_{t\wedge \tau}\Big|\tilde{\mathcal{F}}_s\Big]=M_{s\wedge \tau}
      \,\,\, \,\,\,\,\alpha\ltimes\mathbb{P}\,\,\,\mbox{almost surely}. \end{align}
    \end{theorem}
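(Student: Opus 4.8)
The plan is to run the weak-duality estimate backwards. Since $(\psi,J_\psi)$ is dual-optimal and $(\mathbb{P},\beta,\alpha)$ is primal-optimal, the chain of inequalities that establishes $\mathcal{D}_L\le\mathcal{P}_L$ along this particular pair must collapse to equalities, and the two asserted identities \eqref{eqn:stopping_critera}--\eqref{eqn:martingale_criteria} are precisely the content of those equalities.

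First I would isolate two ingredients. (a) Stopping immediately is admissible in the definition of $J_\psi$, so $J_\psi(t,x)\ge\psi(x)$ for all $(t,x)$, whence $J_\psi(\tau,X_\tau)\ge\psi(X_\tau)$ holds $\alpha\ltimes\mathbb{P}$-a.s. (b) The process $M$ from \eqref{eqn:martingale} is a supermartingale on $[0,\tau]$ under $\alpha\ltimes\mathbb{P}$. This is the classical statement that the value function evaluated along an arbitrary admissible control is a supermartingale, and follows from the dynamic programming principle for $J_\psi$ (cf.\ \cite{krylov1980controlled}); concretely, writing $J_\psi$ as a decreasing limit of smooth supersolutions $J^n$ of \eqref{HJBESec.4} and applying It\^o's formula along $X_t=X_0+\int_0^t\beta_s\,ds+W_t^\beta$, the finite-variation part of $M^n_t:=J^n(t,X_t)-\int_0^tL(s,X_s,\beta_s)\,ds$ has density
\[
\partial_tJ^n+\tfrac12\Delta J^n+\beta_t\cdot\nabla J^n-L(t,X_t,\beta_t)\ \le\ \partial_tJ^n+\tfrac12\Delta J^n+H(t,X_t,\nabla J^n)\ \le\ 0
\]
by the Fenchel--Young inequality $u\cdot z-L(t,x,u)\le H(t,x,z)$ and \eqref{HJBESec.4}, so each $M^n$ is a supermartingale; passing to the monotone limit $M^n\downarrow M$ (using the quadratic lower bound on $J_\psi$ and $\mathbb{E}^{\alpha\ltimes\mathbb{P}}[\int_0^\tau L\,dt]<+\infty$ for the needed integrability) preserves the supermartingale property on $[0,\tau]$.

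Now I would combine. Optional stopping for $M$ at the $\alpha\ltimes\mathbb{P}$-a.s.\ finite time $\tau$ gives
\[
\mathbb{E}^{\alpha\ltimes\mathbb{P}}\!\Big[J_\psi(\tau,X_\tau)-\int_0^\tau L(s,X_s,\beta_s)\,ds\Big]=\mathbb{E}^{\alpha\ltimes\mathbb{P}}[M_\tau]\le\mathbb{E}^{\mathbb{P}}[M_0]=\int_{\R^d}J_\psi(0,x)\,\mu(dx).
\]
Since $(\mathbb{P},\beta,\alpha)$ is primal-optimal, $\mathbb{E}^{\alpha\ltimes\mathbb{P}}[\int_0^\tau L\,dt]=\mathcal{P}_L(\mu,\nu)$, and since $(\psi,J_\psi)$ attains the dual and weak duality is an equality (Theorem~\ref{thm:stochastic_duality}), $\int_{\R^d}\psi\,d\nu-\int_{\R^d}J_\psi(0,x)\,\mu(dx)=\mathcal{P}_L(\mu,\nu)$. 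Plugging these in, and using $X_\tau\sim_{\alpha\ltimes\mathbb{P}}\nu$ together with (a),
\[
\int_{\R^d}\psi\,d\nu=\mathbb{E}^{\alpha\ltimes\mathbb{P}}[\psi(X_\tau)]\le\mathbb{E}^{\alpha\ltimes\mathbb{P}}[J_\psi(\tau,X_\tau)]\le\int_{\R^d}J_\psi(0,x)\,\mu(dx)+\mathcal{P}_L(\mu,\nu)=\int_{\R^d}\psi\,d\nu,
\]
so both inequalities are equalities. The first one together with $J_\psi(\tau,X_\tau)\ge\psi(X_\tau)$ a.s.\ forces $J_\psi(\tau,X_\tau)=\psi(X_\tau)$ $\alpha\ltimes\mathbb{P}$-a.s., i.e.\ \eqref{eqn:stopping_critera}; it also gives $\mathbb{E}[M_\tau]=\mathbb{E}[M_0]$. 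Since $M$ is a supermartingale on $[0,\tau]$, $T\mapsto\mathbb{E}[M_{T\wedge\tau}]$ is nonincreasing and pinned between the now-equal values $\mathbb{E}[M_0]$ and $\mathbb{E}[M_\tau]$, hence constant; and a supermartingale with constant expectation is a martingale, giving $\mathbb{E}^{\alpha\ltimes\mathbb{P}}[M_{t\wedge\tau}\mid\tilde{\mathcal{F}}_s]=M_{s\wedge\tau}$ for all $t>s$, i.e.\ \eqref{eqn:martingale_criteria}.

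The main obstacle I anticipate is the rigorous justification of (b) together with the optional stopping at the unbounded time $\tau$: It\^o's formula must be applied to the merely lower semicontinuous, generally non-smooth $J_\psi$ (forcing the passage through smooth supersolutions and a monotone limit), and one must control $J_\psi(T\wedge\tau,X_{T\wedge\tau})$ as $T\to\infty$. The requisite uniform integrability is exactly where the standing hypotheses are used — the quadratic lower bound \eqref{eqn:quadratic_bound}, the second-moment bound of Proposition~\ref{prop:moment_bound}, finiteness of the optimal cost, and integrability of $\psi$ against $\nu$ for the terminal term. The remaining measure-theoretic bookkeeping with the randomized stopping time and the filtration $\tilde{\mathcal{F}}$ is routine.
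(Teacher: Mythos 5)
Your proposal is correct and follows essentially the same route as the paper: note $J_\psi\ge\psi$ and the supermartingale property of $M$, use primal optimality together with dual attainment and strong duality to get $\mathbb{E}^{\alpha\ltimes\mathbb{P}}\big[\psi(X_\tau)-J_\psi(\tau,X_\tau)\big]+\mathbb{E}^{\alpha\ltimes\mathbb{P}}[M_\tau-M_0]=0$, and conclude that both nonpositive terms vanish. The paper's proof is terser (it asserts the supermartingale property and the requisite integrability without comment), whereas you spell out the It\^o/Fenchel--Young justification and the uniform-integrability issues; the substance is the same.
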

    \begin{proof}
        We first note that $J_\psi(t,x)\geq \psi(x)$ for all $(t,x)$ and $M_t$ is a supermartingale.  Then by the duality of Theorem~\ref{thm:stochastic_duality} we have
        \begin{align}
            0=&\ \mathbb{E}^{\alpha\ltimes\mathbb{P}}\Big[\psi(X_\tau)-J_\psi(0,X_0)-\int_0^\tau L(t,X_t,\beta_t)dt\Big]\\
            =&\ \mathbb{E}^{\alpha\ltimes\mathbb{P}}\Big[\psi(X_\tau)-J_\psi(\tau,X_\tau)+M_\tau-M_0\Big],
        \end{align}
       and (\ref{eqn:stopping_critera}), (\ref{eqn:martingale_criteria}) follow. $\qedhere$
    \end{proof}

We can now state a theorem that consolidates our results to show a structure to optimizers under these monotonicity { conditions on $L$}.
\begin{theorem}\label{thm:structure}
    We suppose duality and dual attainment (following from either Theorem \ref{thm:dual_attainment_large_p}, \ref{thm:dual_attainment_small_p}, or \ref{thm:dual_attaiment_1D}), and that $u\mapsto L(t,x,u)$ is strictly convex.  In addition, assume $t\mapsto L(t,x,u)$ is strictly increasing and $\psi$, $J_\psi$ are continuous, then the optimizer $(\mathbb{P},\beta,\alpha)\in \mathcal{A}(\mu,\nu)$ is unique and the optimal stopping time is given by
    \begin{align}\label{eqn:hitting_time}
        \tau^* = \inf\{t;\ J_\psi(t,X_t)=\psi(X_t)\}.
    \end{align}
    Alternatively, if \,$t\mapsto L(t,x,u)$ is strictly decreasing and $\mu$ and $\nu$ have disjoint support, then the optimizer is unique and also give by (\ref{eqn:hitting_time}).
\end{theorem}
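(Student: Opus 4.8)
The plan is to derive everything from the verification theorem (Theorem~\ref{thm:verification}), the monotonicity of the value function (Proposition~\ref{prop:J_monotonicity}), strict convexity of $u\mapsto L(t,x,u)$, and the continuity (resp.\ disjoint-support) hypothesis. First I would fix a dual optimizer $\psi$ with value function $J_\psi$ and a primal optimizer $(\mathbb{P},\beta,\alpha)\in\mathcal{A}(\mu,\nu)$; both exist since we assume duality and dual attainment (the latter also guaranteeing $\mathcal{A}(\mu,\nu)\neq\emptyset$, and the primal is then attained by the compactness of Proposition~\ref{prop:convex_compact}). Theorem~\ref{thm:verification} gives $J_\psi(\tau,X_\tau)=\psi(X_\tau)$ almost surely and that $M_t=J_\psi(t,X_t)-\int_0^t L(s,X_s,\beta_s)\,ds$ is a genuine martingale on $[0,\tau]$. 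Since $\psi,J_\psi$ are continuous, $R=\{J_\psi=\psi\}$ is closed, so the stopping criterion already yields $\tau\ge\tau^*$ a.s.\ and $(\tau^*,X_{\tau^*})\in R$ on $\{\tau^*<\infty\}$; the content of the theorem is the reverse inequality and uniqueness.

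For uniqueness I would work through the convex Eulerian problem \eqref{eqn:Eulerian_primal_convex}, which is convex and equivalent to the primal by Theorem~\ref{thm:Eulerian_duality} and Proposition~\ref{prop:Eulerian_embedding}. Applying It\^o's formula to $M_t$ and using that $J_\psi$ is a viscosity supersolution of \eqref{HJBESec.4} together with the Fenchel inequality $u\cdot\nabla J_\psi-L(t,x,u)\le H(t,x,\nabla J_\psi)$, the martingale property forces, for Lebesgue-a.e.\ $t<\tau$, both saturation of the Hamilton--Jacobi--Bellman equation at $(t,X_t)$ and that $\beta_t$ realize the supremum defining $H(t,X_t,\nabla J_\psi(t,X_t))$; by strict convexity this maximizer is the unique point $u^*(t,x)$, so the drift is in feedback form $\beta_t=u^*(t,X_t)$. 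The same computation at the Eulerian level forces any optimal $\eta_t$ to be concentrated on the graph of $u^*(t,\cdot)$, so that the optimal pair $(m,v)=(m,u^*)$ is unique; since $\rho$ is recovered from $(m,v)$ as the sink term in the Fokker--Planck relation \eqref{eqn:convex_Eulerian_evolution}, the Eulerian minimizer is unique, and in particular so are the law of $X$ and the joint law of $(\tau,X_\tau)$. Together with the identity $\tau=\tau^*$ (a measurable functional of the path) and uniqueness in law of the SDE $dX_t=u^*(t,X_t)\,dt+dW_t$ with $X_0\sim\mu$ --- which holds in the bounded-drift case by Girsanov, and with a moment-based argument when $1<p<2$ --- this yields uniqueness of $(\mathbb{P},\beta,\alpha)$.

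To get $\tau=\tau^*$ in the increasing case I would first upgrade Proposition~\ref{prop:J_monotonicity}: strict monotonicity of $L$ in $t$ makes $t\mapsto J_\psi(t,x)$ \emph{strictly} decreasing on $\{t:J_\psi(t,x)>\psi(x)\}$ (translate a (near-)optimal control from time $t$ to an earlier time and use that the running cost strictly decreases), so $R=\{(t,x):t\ge b(x)\}$ for a barrier function $b$; the same translation argument shows that at any $(t,x)$ with $t>b(x)$ the only value-optimal control is to stop immediately, since otherwise translating it back to a time $t-\varepsilon$ that is still in $R$ produces a strictly profitable non-stopping competitor and contradicts $J_\psi(t-\varepsilon,x)=\psi(x)$. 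Now if $\mathbb{P}(\tau>\tau^*)>0$ then on that event, for a.e.\ $t\in(\tau^*,\tau)$ the tail of the optimal path from $(t,X_t)$ is value-optimal (by the martingale identity and $J_\psi(\tau,X_\tau)=\psi(X_\tau)$) and not immediate, hence $(t,X_t)\notin\{t>b(X_t)\}$, i.e.\ the path lies in $R^c$ for a.e.\ $t$ just after $\tau^*$; I would then use continuity of $\psi$ and $J_\psi$ near $X_{\tau^*}$ (and that the diffusion spends zero Lebesgue time on the barrier graph, which is where $1<p<2$ needs $d\ge 2$) to contradict $(\tau^*,X_{\tau^*})\in R$, giving $\tau=\tau^*$. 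The decreasing case is the mirror image: Proposition~\ref{prop:J_monotonicity} gives $t\mapsto J_\psi(t,x)$ nondecreasing, so $R=\{(t,x):t\le b(x),\ J_\psi(0,x)=\psi(x)\}$; the disjoint-support hypothesis, combined with $L\ge c>0$, forces $J_\psi(0,\cdot)>\psi$ on $\mathrm{spt}\,\mu$ and hence $\tau^*>0$, after which the same shifting argument with reversed monotonicity gives $\tau=\tau^*$ and uniqueness.

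I expect the main obstacle to be making the passage to $\tau=\tau^*$ rigorous with the regularity actually available: $J_\psi$ is merely a continuous viscosity solution and $u^*$ is only measurable, so the use of It\^o's formula for $M_t$ and the a.e.\ identification $\beta_t=u^*(t,X_t)$ must be justified through the $H^1$/energy estimates of Theorem~\ref{thm:Eulerian_regularity} in the bounded-drift case, or the H\"older estimates of Proposition~\ref{prop:Holder_continuity} when $1<p<2$; and the final ``barrier'' argument excluding $\tau>\tau^*$ has to be phrased so that it only invokes continuity of $\psi,J_\psi$, the strong Markov property, and the martingale identity, rather than any smoothness of $b$ that we do not have.
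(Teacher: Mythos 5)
Your skeleton---the verification theorem, the time-translation argument exploiting strict monotonicity of $t\mapsto L$, and strict convexity for uniqueness of the drift---is the paper's skeleton, but the two places where you depart from it are where the argument breaks. The decisive gap is the closing step for $\tau=\tau^*$. From the martingale identity and the translation argument you correctly conclude that the conditional policy from $(t,X_t)$ cannot stop immediately when $t>b(X_t)$, hence $t\le b(X_t)$ for a.e.\ $t\in(\tau^*,\tau)$. But this does not contradict $(\tau^*,X_{\tau^*})\in R$: since $R$ is closed, $b$ is only lower semi-continuous, so $b(X_t)$ may jump upward as soon as the path moves off $X_{\tau^*}$, and a path can touch $R$ at $\tau^*$ and lie in $R^c$ for a.e.\ later time without any contradiction. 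Your fallback (``the diffusion spends zero Lebesgue time on the barrier graph'') does not repair this. The paper closes the step by a cost comparison instead: having shown that optimizers (i) stop in $R$ and (ii) spend zero Lebesgue time in $R$, and that any policy satisfying (i)--(ii) attains the optimal cost, it checks that $(\mathbb{P},\beta,\alpha^*)$ satisfies (i)--(ii) (this is where continuity of $\psi$ and $J_\psi$ enters), so its cost equals that of $(\mathbb{P},\beta,\alpha)$; since $\tau^*\le\tau$ pointwise by (i) and closedness of $R$, and $L\ge c>0$ by \eqref{eqn:Hamiltonian_coercive}, equality of the costs forces $\tau^*=\tau$ almost surely. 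You should replace your contradiction argument with this comparison.

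The second issue is your uniqueness route: applying It\^o's formula to $M_t$, saturating the HJB equation pointwise, writing $\beta_t=u^*(t,X_t)$ in feedback form, and invoking Girsanov all require $J_\psi$ to be $C^{1,2}$ (or at least in a class admitting an It\^o--Krylov formula), which is not available; the paper's final remark explicitly leaves the feedback/Pontryagin representation as an open question for precisely this reason. The paper instead deduces uniqueness of $(\mathbb{P},\beta)$ directly from strict convexity of $u\mapsto L(t,x,u)$---a soft convex-combination/Jensen argument in the convex formulation, needing no regularity of $J_\psi$---and obtains $\alpha=\alpha^*$ from the cost comparison above rather than from an identification of the drift. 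So keep the convex Eulerian formulation, but use it for a convexity argument, not for a pointwise characterization of $\beta$. (Your treatment of the decreasing case should likewise be rerouted through (i)--(ii): disjoint supports are used to rule out immediate stopping at $t=0$, not to prove $J_\psi(0,\cdot)>\psi$ on $\operatorname{spt}\mu$.)
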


\begin{proof}
    We first show that the optimal $(\mathbb{P},\beta,\alpha)$ for $\mathcal{P}_L(\mu,\nu)$ satisfies:
    \begin{enumerate}[label=(\roman*)]
        \item the process stops in $R$,
        $$
            \mathbb{E}^{\alpha\ltimes\mathbb{P}}\Big[ \mathbf{1}\big\{(\tau,X_\tau)\in R\big\}\Big]=1,
        $$
        \item and
        $$         \mathbb{E}^{\alpha\ltimes\mathbb{P}}\Big[\int_0^\tau \mathbf{1}\big\{(t,X_t)\in R\big\}dt\Big]=0.
        $$
    \end{enumerate}
   We then show that there is a unique $(\mathbb{P},\beta,\alpha)$ that minimizes
    $
        \mathcal{J}_L(\mathbb{P},\beta,\alpha)
    $
    subject to (i) and (ii), and the unique optimal randomized stopping time is given by $\tau^*$.
   
    By Theorem \ref{thm:verification}, we immediately have (i) from (\ref{eqn:stopping_critera}). We let $(\mathbb{P}^{t,x},\beta^{t,x},\alpha^{t,x})\in \mathcal{A}_t(\delta_x)$ be the conditional expectation, i.e.\ satisfying for each $t$ and $G\in C_b(\R^+\times \R^d\times \R^d)$,
    $$
        \mathbb{E}^{\alpha\ltimes \mathbb{P}}\Big[\int_t^\tau G(s,X_s,\beta_s)\Big]=\int_{\R^d}\mathbb{E}^{\alpha^{t,x}\ltimes \mathbb{P}^{t,x}}\Big[\int_t^\tau G(s,X_s,\beta^{t,x}_s\big)\Big]\eta_t(dx,\R^d).
    $$
    Then,
    \begin{align*}
        0=&\ \mathbb{E}^{\alpha\ltimes \mathbb{P}}\Big[M_\tau-M_{t\wedge \tau}\Big]\\
        =&\ \int_{\R^d} \mathbb{E}^{\alpha^{t,x}\ltimes \mathbb{P}^{t,x}}\Big[\psi(X_\tau)-J_\psi(t,x)+\int_t^\tau L\big(s,X_s,\beta^{t,x}_s\big)ds\Big]\eta_t(dx,\R^d),
    \end{align*}\\
    and it follows that
    $$
        J_\psi(t,x)=\mathbb{E}^{\alpha^{t,x}\ltimes \mathbb{P}^{t,x}}\Big[\psi(X_\tau)+\int_t^\tau L\big(s,X_s,\beta^{t,x}_s\big)ds\Big]
    $$
    for all $t$ and $\eta_t$-a.e.\ $x$. If $J_\psi(t,x)=\psi(x)$ and either $t\mapsto L(t,x,u)$ is strictly increasing and $s>t+\epsilon$ for $\epsilon>0$ or $t\mapsto L(t,x,u)$ is strictly decreasing and $0\leq s<t+\epsilon$ for $\epsilon<0$, then $T^{t+\epsilon-s,0}_\#\alpha^{s,x}\ltimes \mathbb{P}^{s,x}$ will satisfy
    $$
        J_\psi(t+\epsilon,x)\geq\mathbb{E}^{\alpha^{s,x}\ltimes \mathbb{P}^{s,x}}\Big[\psi(X_\tau)+\int_{s}^\tau L\big(r-s+t+\epsilon,X_r,\beta^{s,x}_{r-s+t+\epsilon}\big)dr\Big],
    $$
    which is only satisfied if $\tau=s$ with probability one, and (ii) follows.
    % If
    % $$         \mathbb{E}^{\alpha\ltimes\mathbb{P}}\Big[\int_0^\tau \mathbf{1}\big\{(t,X_t)\in R\big\}dt\Big]>0
    %     $$
    %     then we can find stopping times $0\leq \tau_1\leq \tau_2\leq \tau$ for $\tau$ in the support of $\alpha$, such that {\red $(t,X_t)\in R$ for $\tau_1 \le t \le \tau_2$} and
    %     $$
    %         \mathbb{E}^{ \mathbb{P}}\Big[\tau_2-\tau_1\Big]>0.
    %     $$
    %     We then have that
    %     \begin{align*}
    %     &\ \mathbb{E}^\mathbb{P}\Big[J_\psi(\tau_2,X_{\tau_2})-J_\psi(\tau_1,X_{\tau_1})-\int_{\tau_1}^{\tau_2}L(t,X_t,\beta_t)\Big|\mathcal{F}_{\tau_1}\Big]\\
    %     =&\ \mathbb{E}^\mathbb{P}\Big[\psi(X_{\tau_2})-\psi(X_{\tau_1})-\int_{\tau_1}^{\tau_2}L(t,X_t,\beta_t)dt\Big|\mathcal{F}_{\tau_1}\Big]\\
    %     <& {\blue \ \mathbb{E}^\mathbb{P}\Big[\psi(X_{\tau_2})-\psi(X_{\tau_1})-\int_{\tau_1}^{\tau_2}L(\hat{\tau},X_t,\beta_t)dt\Big|\mathcal{F}_{\tau_1}\Big]\leq 0,}
    %     \end{align*}
    %     where $\hat{\tau}$ is $\tau_1$ in the case that $t\mapsto L$ is increasing, and $\tau_2$ if $t\mapsto L$ is decreasing. {\blue  \marginpar{I do not understand this. How do you compare $\bar L$ and $L$ with $\hat \tau$ here?}  This follows from (\ref{eqn:psi_supersolution})} and contradicts the martingale property (\ref{eqn:martingale_criteria}).
        We now consider  $(\mathbb{P}',\beta',\alpha')\in \mathcal{A}(\mu)$ that satisfies (i) and (ii), and we have
        \begin{align*}
             \mathbb{E}^{\alpha'\ltimes\mathbb{P}'}\Big[\int_0^\tau L(t,X_t,\beta'_t)dt\Big]
            \geq \mathbb{E}^{\alpha'\ltimes\mathbb{P}'}\Big[\psi(X_{\tau})-J_\psi(0,X_0)\Big]
            = \mathbb{E}^{\alpha\ltimes\mathbb{P}}\Big[\int_0^\tau L(t,X_t,\beta_t)dt\Big],
        \end{align*}
        by the definition of $J_\psi$, optimality of $(\mathbb{P},\beta,\alpha)$ and Theorem \ref{thm:stochastic_duality},
        which implies that $(\mathbb{P}',\beta',\alpha')$ optimizes the cost over policies in $\mathcal{A}(\mu)$ that satisfy (i) and (ii).

        Given $\mathbb{P}$ and $\beta$, % $(\mathbb{P},\beta,\alpha)$
         let $\alpha^*$ be the  randomized stopping time corresponding to $\tau^*$ defined in  (\ref{eqn:hitting_time}).
      %   we can define $\tau^*$ by (\ref{eqn:hitting_time}), and let $\alpha^*$ be the corresponding randomized stopping time. 
       We then have that (i) and (ii) are satisfied by  $(\mathbb{P},\beta,\alpha^*)$ using continuity of $\psi$ and $J_\psi$.  Furthermore the cost of $(\mathbb{P},\beta,\alpha^*)$ is less than or equal to the cost of $(\mathbb{P},\beta,\alpha)$ because $L$ is nonnegative.  Furthermore, since the cost is equal, this implies that $\alpha^*=\alpha$. Finally, uniqueness of $\mathbb{P}$ and $\beta$ follow from strict convexity of $L$.
\end{proof}

%We end with the following question: {\em  Does the control policy $\beta_t=-D_z H(t,X_t,\nabla J_\psi(t,X_t))$ solve the SDE in a strong sense?  This seems to require $J_\psi$ to be $C^{1,1}$.}

\begin{remark}
The assumption that $\psi$ and $J_\psi$ are continuous in Theorem \ref{thm:structure} is satisfied if $1<p<2$ as in Theorem \ref{thm:dual_attainment_small_p}, and is probably not needed if the drift is bounded ($p=+\infty$) as in Theorem \ref{thm:dual_attainment_large_p} if one pursues further the Sobolev regularity of the stopping distribution $\rho^*$ for $\alpha^*$ as was done in \cite{GKPS,GKP3}.

From the control theory point of view, it is a natural question whether the optimal control policy $\beta$ satisfies the Pontryagin maximum principle: namely, $\beta_t=-D_z H(t,X_t,\nabla J_\psi(t,X_t))$ and solves the SDE: $    dX_t=\beta(X_t)dt+dW_t$  in a strong sense.  This seems to require $J_\psi$ to be $C^{1,1}$.
We leave it as an open question.
\end{remark}
\bibliography{StochasticBib}

\begin{thebibliography}{10}

\bibitem{bayraktar2019distribution}
Erhan Bayraktar and Christopher~W Miller.
\newblock Distribution-constrained optimal stopping.
\newblock {\em Mathematical Finance}, 29(1):368--406, 2019.

\bibitem{beiglbock2018geometry}
Mathias Beiglb{\"o}ck, Manu Eder, Christiane Elgert, and Uwe Schmock.
\newblock Geometry of distribution-constrained optimal stopping problems.
\newblock {\em Probability theory and related fields}, 172(1-2):71--101, 2018.

\bibitem{beiglboeck2017optimal}
Mathias Beiglboeck, Alexander~MG Cox, and Martin Huesmann.
\newblock Optimal transport and \uppercase{S}korokhod embedding.
\newblock {\em Inventiones Mathematicae}, 208(2):327--400, 2017.

\bibitem{B1}
Yann Brenier.
\newblock Polar factorization and monotone rearrangement of vector-valued
  functions.
\newblock {\em Communications on pure and applied mathematics}, 44(4):375--417,
  1991.

\bibitem{Cannarsa2010Holder}
Piermarco Cannarsa and Pierre Cardaliaguet.
\newblock H\"{o}lder estimates in space-time for viscosity solutions of
  \uppercase{H}amilton-\uppercase{J}acobi equations.
\newblock {\em Communications on Pure and Applied Mathematics}, 63(5):590--629,
  2010.

\bibitem{chen2016relation}
Yongxin Chen, Tryphon~T Georgiou, and Michele Pavon.
\newblock On the relation between optimal transport and {S}chr{\"o}dinger
  bridges: A stochastic control viewpoint.
\newblock {\em Journal of Optimization Theory and Applications},
  169(2):671--691, 2016.

\bibitem{Dyda}
Bart{\l}omiej Dyda and Moritz Kassmann.
\newblock On weighted poincar{\'e} inequalities.
\newblock {\em Ann. Acad. Sci. Fenn. Math. in print, see also http://arxiv.
  org/abs/1209.3125}, 2013.

\bibitem{ekeland1999convex}
Ivar Ekeland and Roger Temam.
\newblock {\em Convex analysis and variational problems}, volume~28.
\newblock Siam, 1999.

\bibitem{Elcrat}
A.~R. Elcrat and H.~A. MacLean.
\newblock Weighted wirtinger and poincaré inequalities on unbounded domains.
\newblock {\em Indiana University Mathematics Journal}, 29:321--332, 1980.

\bibitem{G-M}
Wilfrid Gangbo and Robert~J McCann.
\newblock The geometry of optimal transportation.
\newblock {\em Acta Mathematica}, 177(2):113--161, 1996.

\bibitem{GKP}
Nassif Ghoussoub, Young-Heon Kim, and Aaron~Zeff Palmer.
\newblock Optimal transport with controlled dynamics and free end times.
\newblock {\em SIAM Journal on Control and Optimization}, 56(5):3239--3259,
  2019.

\bibitem{GKP3}
Nassif Ghoussoub, Young-Heon Kim, and Aaron~Zeff Palmer.
\newblock A solution to the \uppercase{M}onge transport problem for
  \uppercase{B}rownian martingales.
\newblock {\em Submitted}, pages 1--23, 2019.

\bibitem{GKPS}
Nassif Ghoussoub, Young-Heon Kim, and Aaron~Zeff Palmer.
\newblock \uppercase{PDE} methods for optimal \uppercase{S}korokhod embeddings.
\newblock {\em Calc. of Variations and PDEs}, pages 1--33, 2019.

\bibitem{haussmann1986existence}
UG~Haussmann.
\newblock Existence of optimal {M}arkovian controls for degenerate diffusions.
\newblock In {\em Stochastic differential systems}, pages 171--186. Springer,
  1986.

\bibitem{ishii1995equivalence}
Hitoshi Ishii.
\newblock On the equivalence of two notions of weak solutions, viscosity
  solutions and distribution solutions.
\newblock {\em Funkcial. Ekvac}, 38(1):101--120, 1995.

\bibitem{Jacod}
Jean Jacod and Albert Shiryaev.
\newblock {\em Limit theorems for stochastic processes}, volume 288.
\newblock Springer Science \& Business Media, 2013.

\bibitem{kallblad2017dynamic}
Sigrid K{\"a}llblad.
\newblock A dynamic programming principle for distribution-constrained optimal
  stopping.
\newblock {\em arXiv preprint arXiv:1703.08534}, 2017.

\bibitem{kantorovich1942translocation}
Leonid~Vitalievich Kantorovich.
\newblock On the translocation of masses.
\newblock In {\em Dokl. Akad. Nauk. USSR (NS)}, volume~37, pages 199--201,
  1942.

\bibitem{krylov1980controlled}
Nikolaj~Vladimirovi{\v{c}} Krylov.
\newblock {\em Controlled diffusion processes}.
\newblock Springer Verlag, 1980.

\bibitem{leonard2012schrodinger}
Christian L{\'e}onard.
\newblock From the {S}chr{\"o}dinger problem to the {M}onge--{K}antorovich
  problem.
\newblock {\em Journal of Functional Analysis}, 262(4):1879--1920, 2012.

\bibitem{lions1983optimal}
Pierre-Louis Lions.
\newblock Optimal control of diffusion processes and hamilton--jacobi--bellman
  equations part 2: viscosity solutions and uniqueness.
\newblock {\em Communications in partial differential equations},
  8(11):1229--1276, 1983.

\bibitem{meyer1984tightness}
Paul~Andr{\'e} Meyer and WA~Zheng.
\newblock Tightness criteria for laws of semimartingales.
\newblock In {\em Annales de l'IHP Probabilit{\'e}s et statistiques},
  volume~20, pages 353--372, 1984.

\bibitem{mikami2002optimal}
Toshio Mikami.
\newblock Optimal control for absolutely continuous stochastic processes and
  the mass transportation problem.
\newblock {\em Electronic Communications in Probability}, 7:199--213, 2002.

\bibitem{mikami2004monge}
Toshio Mikami.
\newblock Monge's problem with a quadratic cost by the zero-noise limit of
  h-path processes.
\newblock {\em Probability theory and related fields}, 129(2):245--260, 2004.

\bibitem{mikami2008optimal}
Toshio Mikami and Michele Thieullen.
\newblock Optimal transportation problem by stochastic optimal control.
\newblock {\em SIAM Journal on Control and Optimization}, 47(3):1127--1139,
  2008.

\bibitem{Monge}
Gaspard Monge.
\newblock M{\'e}moire sur la th{\'e}orie des d{\'e}blais et des remblais.
\newblock {\em Histoire de l'Acad{\'e}mie Royale des Sciences de Paris}, 1781.

\bibitem{porretta2013planning}
Alessio Porretta.
\newblock On the planning problem for a class of mean field games.
\newblock {\em Comptes Rendus Mathematique}, 351(11-12):457--462, 2013.

\bibitem{porretta2014planning}
Alessio Porretta.
\newblock On the planning problem for the mean field games system.
\newblock {\em Dynamic Games and Applications}, 4(2):231--256, 2014.

\bibitem{Sylvestre2015}
Luis Sylvestre.
\newblock Viscosity solutions of elliptic equations.
\newblock {\em Lecture notes in Second Chicago Summer School In Analysis,
  http://math.uchicago.edu/~luis/preprints/viscosity-solutions.pdf}, 2015.

\bibitem{tan2013optimal}
Xiaolu Tan, Nizar Touzi, et~al.
\newblock Optimal transportation under controlled stochastic dynamics.
\newblock {\em The annals of probability}, 41(5):3201--3240, 2013.

\bibitem{Wong}
E~Wong.
\newblock Representation of martingales, quadratic variation and applications.
\newblock {\em SIAM J. Control}, 9:621--633, 1971.

\bibitem{zheng1985tightness}
WA~Zheng.
\newblock Tightness results for laws of diffusion processes application to
  stochastic mechanics.
\newblock In {\em Annales de l'IHP Probabilit{\'e}s et statistiques},
  volume~21, pages 103--124, 1985.

\end{thebibliography}
%\bibliography{StochasticTransportationBib.bib}
\bibliographystyle{plain}

\end{document}